\documentclass[11pt,twoside]{amsart}
\usepackage{a4wide}
\usepackage{amssymb}
\usepackage{amsmath}
\usepackage{amsthm}
\usepackage{xypic}

\newcommand{\im}{\operatorname{im}}
\newcommand{\id}{\operatorname{id}}
\newcommand{\pr}{\operatorname{pr}}

\newcommand{\Hom}{\operatorname{Hom}}
\newcommand{\Lie}{\operatorname{Lie}}

\newcommand{\tors}{\operatorname{tors}}

\theoremstyle{plain}

\newtheorem{theorem}{Theorem}[section]
\newtheorem{corollary}[theorem]{Corollary}
\newtheorem{lemma}[theorem]{Lemma}
\newtheorem{remark}[theorem]{Remark}
\newtheorem{proposition}[theorem]{Proposition}

\theoremstyle{remark}

\newtheorem{remar}[theorem]{Remark}

\newtheorem*{rem}{Remark}

\frenchspacing

\begin{document}

\title[]{$SK_1$ and Lie algebras}%
\author{Peter Schneider and Otmar Venjakob}%
\address{Universit\"{a}t M\"{u}nster,  Mathematisches Institut,  Einsteinstr. 62,
48291 M\"{u}nster,  Germany,
 http://www.uni-muenster.de/math/u/schneider/ }%
\email{pschnei@math.uni-muenster.de }%

\address{Universit\"{a}t Heidelberg,  Mathematisches Institut,  Im Neuenheimer Feld 288,  69120
Heidelberg,  Germany,
 http://www.mathi.uni-heidelberg.de/$\,\tilde{}\,$venjakob/}
\email{venjakob@mathi.uni-heidelberg.de}

\thanks{ }%
\subjclass{19B28, 17B20}%
\keywords{uniform group, Iwasawa algebra, $SK_1$, Lie algebra }%

\date{July 26, 2011}%
\begin{abstract}
We investigate the vanishing of the group $SK_1(\Lambda(G))$ for the Iwasawa algebra $\Lambda(G)$ of a pro-$p$ $p$-adic Lie group $G$ (with $p \neq 2$). We reduce this vanishing to a linear algebra problem for Lie algebras over arbitrary rings, which we solve for Chevalley orders in split reductive Lie algebras.
\end{abstract}
\maketitle

\section*{Introduction}

Throughout the paper we fix a prime numer $p \neq 2$, and we always let $G$ be a pro-$p$ $p$-adic Lie group with Lie algebra $\Lie_{\mathbb{Q}_p}(G)$. Its Iwasawa algebra is defined to be $\Lambda(G) := \varprojlim_N \mathbb{Z}_p [G/N]$ where $N$ run over all open normal subgroups of $G$. Correspondingly we have the larger algebra $\Lambda^\infty(G) := \varprojlim_N \mathbb{Q}_p [G/N]$. The inclusion $\Lambda(G) \subseteq \Lambda^\infty(G)$ induces a homomorphism of algebraic $K$-groups, and we define
\begin{equation*}
    SK_1(\Lambda(G)) := \ker \big( K_1(\Lambda(G)) \longrightarrow K_1(\Lambda^\infty(G)) \big) \ .
\end{equation*}
This generalizes the well known definition of $SK_1(\mathbb{Z}[G])$ for a finite $p$-group $G$ as, for example, in \cite{Oli}. The problem which we address in this paper is the vanishing of $SK_1(\Lambda(G))$. In the finite group case this does not seem to have a simple answer beyond abelian groups. In contrast our main result Thm.\ \ref{main} says that any principal congruence subgroup $G$ of a split reductive algebraic group over $\mathbb{Q}_p$ (at least if $p \geq 5$) satisfies $SK_1(\Lambda(G)) = 0$. Such congruence subgroups all have the property of being uniform pro-$p$-groups. Although we will heavily make use of this fact, we also will see in section \ref{s:counterex} that there are uniform groups with nonvanishing $SK_1$.

Our interest in this problem comes from noncommutative Iwasawa theory where $G$ is a Galois group of some (infinite) extension of number fields. Cohomological invariants of arithmetic objects often are related to classes in $K_1(\Lambda(G))$. On the other hand the group $K_1(\Lambda^\infty(G))$ can be identified with the group of Galois equivariant $\overline{\mathbb{Z}}_p^\times$-valued functions on the set of isomorphism classes of $\overline{\mathbb{Q}}_p$-irreducible Artin representations of $G$ (cf.\ \cite{SV} \S3). Important such functions are given by $L$-values. One of the basic questions of Iwasawa theory is to which extent $L$-values determine global cohomological invariants. This obviously is related to the injectivity of the above map whose kernel was defined to be $SK_1(\Lambda(G))$. In particular, the vanishing of $SK_1(\Lambda(G))$ is a necessary condition for $p$-adic $L$-functions to be uniquely defined (cf.\ \cite{Kak}).

For a general uniform pro-$p$-group $G$ Lazard (\cite{Laz}) has constructed a corresponding Lie algebra $\mathcal{L}(G)$ over $\mathbb{Z}_p$ which is a lattice in the $\mathbb{Q}_p$-Lie algebra of $G$ in the sense of $p$-adic Lie groups. Generalizing a homological description of $SK_1$ of a finite group by Oliver in \cite{Oli} and using another important result by Lazard, that the cohomology algebra $H^\ast(G, \mathbb{F}_p)$ of any uniform $G$ is the exterior algebra on $H^1(G, \mathbb{F}_p)$, as well as the commutator calculus in $G$ (cf.\ \cite{GS}) we translate our problem in Thm.\ \ref{Lie-criterion1}  to a purely Lie theoretic property of $\mathcal{L}(G)$.

In fact this Lie theoretic property makes sense for any Lie algebra $\mathcal{L}$ over any ring $R$ which is finitely generated free as an $R$-module. It is the question whether the kernel of the Lie bracket viewed as a linear map $\bigwedge^2 \mathcal{L} \longrightarrow \mathcal{L}$ is generated by indecomposable elements in the kernel. To our knowledge the only situation where this has been studied in the literature is the case of semisimple complex Lie algebras. Here Kostant (\cite{Ko}) showed by representation theoretic methods that the question always has a positive answer. We will in fact show that the answer is positive for any Chevalley order of a split reductive Lie algebra base changed to any ring in which 2 and 3 are invertible. In such a general setting representation theoretic tools do not apply anymore. We use instead the combinatorics of the root subspaces. In this sense our approach is much more elementary than the one by Kostant. We also show that the assumption that the ambient Lie algebra is reductive is not necessary. The answer also is positive for the nilpotent radical of any Borel subalgebra in the above Chevalley orders.

By a different line of reasoning which uses the Lazard isomorphism for continuous group cohomology and the classical Whitehead lemmata about the cohomology of semisimple Lie algebras we show in Thm.\ \ref{finiteness} that $SK_1(\Lambda(G))$ is finite for any pro-$p$ $p$-adic Lie group $G$ whose Lie algebra divided the Lie algebra of the center of $G$ is semisimple.

\textit{General notation:} Let $\tors M$ and $M[p]$, for an abelian group $M$, denote the torsion subgroup and the subgroup of elements killed by $p$, respectively. If $M$ is an abelian pro-$p$ group then $M^\vee := \Hom^{cont}(M,\mathbb{Q}_p/\mathbb{Z}_p)$
denotes its Pontrjagin dual consisting of all continuous group homomorphisms from $M$ into $\mathbb{Q}_p/\mathbb{Z}_p$.

\section{Preliminaries}

In this section we give a description of $SK_1(\Lambda(G))$ in terms of group cohomology. This is a straightforward consequence of results in \cite{Oli} in the finite group case.

Let $H$ be any finite $p$-group. We have $SK_1(\mathbb{Z}_p[H]) = \ker \big( K_1(\mathbb{Z}_p[H]) \longrightarrow K_1(\mathbb{Q}_p[H]) \big)$.
According to \cite{Oli} Prop.\ 8.4
and Thm.\ 8.6 there is an exact sequence
\begin{equation*}
    \oplus_{A \subseteq H} H_2(A,\mathbb{Z}) \longrightarrow
    H_2(H,\mathbb{Z}) \longrightarrow SK_1(\mathbb{Z}_p[H])
    \longrightarrow 0
\end{equation*}
where $A$ runs over all abelian subgroups of $H$. All three terms in
this sequence are finite abelian $p$-groups. Since
$\mathbb{Q}_p/\mathbb{Z}_p$ is an injective abelian group we have,
by the universal coefficient theorem,
\begin{equation*}
    H^2(H,\mathbb{Q}_p/\mathbb{Z}_p) =
    \Hom(H_2(H,\mathbb{Z}), \mathbb{Q}_p/\mathbb{Z}_p)
\end{equation*}
and hence the dual exact sequence
\begin{equation}\label{f:finite}
    0 \longrightarrow SK_1(\mathbb{Z}_p[H])^\vee \longrightarrow
    H^2(H,\mathbb{Q}_p/\mathbb{Z}_p) \xrightarrow{\; {\rm res} \;} \prod_{A \subseteq H}
    H^2(A,\mathbb{Q}_p/\mathbb{Z}_p) \ .
\end{equation}

We now let $H$ run over the factor groups $G/N$ of $G$ by open
normal subgroups $N$ and pass to the projective limit in
\eqref{f:finite}. Since
\begin{equation*}
    SK_1(\Lambda(G)) = \varprojlim_N SK_1(\mathbb{Z}_p[G/N])
\end{equation*}
by \cite{SV} Cor.\ 3.2 we obtain an isomorphism
\begin{equation*}
     SK_1(\Lambda(G))^\vee = \ker\big(
     H^2(G,\mathbb{Q}_p/\mathbb{Z}_p)
     \longrightarrow \varinjlim_N \prod_{A \subseteq G/N}
     H^2(A,\mathbb{Q}_p/\mathbb{Z}_p) \big) \ .
\end{equation*}

\begin{lemma}\label{limit-kernel}
We have
\begin{multline*}
   \ker\big(
     H^2(G,\mathbb{Q}_p/\mathbb{Z}_p)
     \longrightarrow \varinjlim_N \prod_{A \subseteq G/N}
     H^2(A,\mathbb{Q}_p/\mathbb{Z}_p) \big) \\
  = \ker\big(  H^2(G,\mathbb{Q}_p/\mathbb{Z}_p)
     \xrightarrow{\; {\rm res} \;} \prod_{A \subseteq G}
     H^2(A,\mathbb{Q}_p/\mathbb{Z}_p) \big)
\end{multline*}
where on the right hand side $A$ runs over all closed abelian subgroups of $G$.
\end{lemma}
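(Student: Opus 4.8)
The plan is to show the two kernels coincide as subgroups of $H^2(G,\mathbb{Q}_p/\mathbb{Z}_p)$ by proving each is contained in the other. Throughout I use that, since $\mathbb{Q}_p/\mathbb{Z}_p$ carries the trivial discrete $G$-action, continuous cohomology is a direct limit of the cohomology of finite quotients: $H^2(G,\mathbb{Q}_p/\mathbb{Z}_p)=\varinjlim_N H^2(G/N,\mathbb{Q}_p/\mathbb{Z}_p)$ along inflation, and likewise $H^2(C,\mathbb{Q}_p/\mathbb{Z}_p)=\varinjlim_N H^2(C/(C\cap N),\mathbb{Q}_p/\mathbb{Z}_p)$ for any closed subgroup $C$, where $C/(C\cap N)\cong CN/N$ is the image of $C$ in $G/N$. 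Fix a class $c$ and, for $N$ small enough, write $c_N\in H^2(G/N,\mathbb{Q}_p/\mathbb{Z}_p)$ for the class with $\operatorname{infl}(c_N)=c$. Unwinding the transition maps of the direct system $\prod_{A\subseteq G/N}H^2(A,\mathbb{Q}_p/\mathbb{Z}_p)$ (which are inflation on each component along the image maps $B\mapsto\overline B$), membership of $c$ in the left-hand kernel means precisely that there exists $N$ with $\operatorname{res}_A(c_N)=0$ for every abelian subgroup $A\subseteq G/N$.

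Granting this reformulation, the inclusion ``left $\subseteq$ right'' is the easy half and rests on the compatibility of restriction and inflation. If $N$ witnesses that $c$ lies in the left-hand kernel and $C\subseteq G$ is any closed abelian subgroup, then its image $\overline C=CN/N$ is an abelian subgroup of $G/N$, and the commutative square relating the quotients $C\to\overline C$ and $G\to G/N$ yields $\operatorname{res}_C(c)=\operatorname{infl}_{C\to\overline C}(\operatorname{res}_{\overline C}(c_N))=0$. Hence $c$ lies in the right-hand kernel.

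The substantial direction is ``right $\subseteq$ left'', and this is where I expect the only real difficulty: an abelian subgroup of a finite quotient $G/N$ need not be the image of any closed abelian subgroup of $G$, so one cannot test it directly. I will instead argue by contraposition using a compactness argument. Suppose $c$ is not in the left-hand kernel, so that for every open normal $N$ the set $\mathcal{A}_N$ of abelian subgroups $A\subseteq G/N$ with $\operatorname{res}_A(c_N)\neq 0$ is nonempty; it is also finite. For $N'\subseteq N$ the restriction-inflation identity $\operatorname{res}_{A'}(c_{N'})=\operatorname{infl}_{A'\to\overline{A'}}(\operatorname{res}_{\overline{A'}}(c_N))$ shows that passing to the image $A'\mapsto\overline{A'}$ carries $\mathcal{A}_{N'}$ into $\mathcal{A}_N$, so the $\mathcal{A}_N$ form an inverse system of nonempty finite sets. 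Its inverse limit is therefore nonempty, providing a compatible family $(A_N)_N$ of abelian subgroups with $\overline{A_{N'}}=A_N$; the corresponding inverse limit $C:=\varprojlim_N A_N\subseteq\varprojlim_N G/N=G$ is a closed subgroup which is abelian (being an inverse limit of abelian groups) and which surjects onto each $A_N$.

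It then remains to see that $C$ detects $c$, i.e. $\operatorname{res}_C(c)\neq 0$, contradicting membership in the right-hand kernel. Under the identification $H^2(C,\mathbb{Q}_p/\mathbb{Z}_p)=\varinjlim_N H^2(A_N,\mathbb{Q}_p/\mathbb{Z}_p)$, the class $\operatorname{res}_C(c)$ is represented at each level $N$ by $\operatorname{res}_{A_N}(c_N)$, and by construction these representatives are all nonzero; moreover the transition map sends $\operatorname{res}_{A_N}(c_N)$ to $\operatorname{infl}_{A_{N'}\to A_N}(\operatorname{res}_{A_N}(c_N))=\operatorname{res}_{A_{N'}}(c_{N'})\neq 0$, so no representative becomes zero at a later stage. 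Hence $\operatorname{res}_C(c)\neq 0$, which finishes the contrapositive and the proof. The only point requiring care beyond bookkeeping is the nonemptiness of the inverse limit of the $\mathcal{A}_N$ together with the verification that its limit is a genuine closed abelian subgroup with surjective projections; everything else is the standard interplay of inflation and restriction.
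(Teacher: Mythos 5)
Your proof is correct and follows essentially the same route as the paper's: the easy inclusion via compatibility of inflation and restriction, and the substantial inclusion via the compactness argument that the inverse system of nonempty finite sets of ``detecting'' abelian subgroups in the finite quotients has nonempty projective limit, yielding the required closed abelian subgroup. The only cosmetic differences are that the paper uses a countable decreasing chain of open normal subgroups with trivial intersection (making nonemptiness of the projective limit elementary) where you invoke the general fact for directed systems, and that you spell out the final verification $\operatorname{res}_C(c)\neq 0$, which the paper leaves implicit.
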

\begin{proof}
Let $c \in H^2(G,\mathbb{Q}_p/\mathbb{Z}_p)$ be any fixed cohomology class. First suppose that $c$ does not lie in the right hand kernel. We then find a closed abelian subgroup $A \subseteq G$ such that $c|A \neq 0$. Whenever $N \subseteq G$ is an open normal subgroup such that $c = \mathrm{inf}(c')$ lifts to a class $c' \in H^2(G/N,\mathbb{Q}_p/\mathbb{Z}_p)$ we must have $c'|(AN/N) \neq 0$. Hence $c$ does not lie in the left hand kernel either.

Vice versa, let us now assume that $c$ does not lie in the left hand kernel. We find an open normal subgroup $N_1 \subseteq G$ such that $c = \mathrm{inf}(c_1)$ lifts to some $c_1 \in H^2(G/N_1,\mathbb{Q}_p/\mathbb{Z}_p)$. We now choose a decreasing sequence $N_1 \supseteq N_2 \supseteq \ldots$ of open normal subgroups $N_j \subseteq G$ such that $\bigcap_j N_j = 1$, and we put $c_j := \mathrm{inf}(c_1) \in H^2(G/N_j,\mathbb{Q}_p/\mathbb{Z}_p)$. Our assumption on $c$ guarantees that the set $\mathfrak{A}_j$ of all abelian subgroups $A \subseteq G/N_j$  such that $c_j|A \neq 0$ is nonempty for any $j \geq 1$. In fact, the $\mathfrak{A}_j$ form a projective system (w.r.t.\ sending $A \subseteq G/N_{j+1}$ to its image in $G/N_j$) of finite sets. Hence their projective limit is nonempty. For any element $(A_j)_j \in \varprojlim \mathfrak{A}_j$ its projective limit $A := \varprojlim A_j$ is a closed abelian subgroup in $G$ such that $c|A \neq 0$. This shows that $c$ does not lie in the right hand kernel.
\end{proof}

\begin{corollary}\label{profinite}
We have
\begin{equation*}
     SK_1(\Lambda(G))^\vee = \ker\big(
     H^2(G,\mathbb{Q}_p/\mathbb{Z}_p)
     \xrightarrow{\; {\rm res} \;} \prod_{A \subseteq G}
     H^2(A,\mathbb{Q}_p/\mathbb{Z}_p) \big) \ .
\end{equation*}
\end{corollary}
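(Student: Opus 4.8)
The plan is simply to combine the isomorphism derived in the text immediately preceding Lemma~\ref{limit-kernel} with that lemma itself. Recall that applying the exact sequence \eqref{f:finite} to each finite quotient $H = G/N$ and passing to the limit yields
\begin{equation*}
SK_1(\Lambda(G))^\vee = \ker\big(H^2(G,\mathbb{Q}_p/\mathbb{Z}_p) \longrightarrow \varinjlim_N \prod_{A \subseteq G/N} H^2(A,\mathbb{Q}_p/\mathbb{Z}_p)\big).
\end{equation*}
Lemma~\ref{limit-kernel} identifies this kernel with the kernel of the restriction map to all closed abelian subgroups of $G$, which is exactly the right hand side of the asserted equality. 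Thus the corollary is formally immediate.

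To make the limiting step careful, I would take the direct limit of \eqref{f:finite} over all open normal $N$, the transition maps being inflation on cohomology. Since filtered direct limits are exact, the limit of the left exact sequences stays left exact, and in particular the limit of the kernels is the kernel of the limit map. I would then identify the three limit terms. First, $\varinjlim_N SK_1(\mathbb{Z}_p[G/N])^\vee = (\varprojlim_N SK_1(\mathbb{Z}_p[G/N]))^\vee = SK_1(\Lambda(G))^\vee$, using that Pontrjagin duality turns the projective limit of the finite abelian $p$-groups $SK_1(\mathbb{Z}_p[G/N])$ (from \cite{SV} Cor.~3.2) into the direct limit of their duals. Second, $\varinjlim_N H^2(G/N,\mathbb{Q}_p/\mathbb{Z}_p) = H^2(G,\mathbb{Q}_p/\mathbb{Z}_p)$, which is the standard expression of continuous cohomology with discrete torsion coefficients as a direct limit of the cohomology of the finite quotients along inflation. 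Third, the remaining product term is left untouched.

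There is no genuine obstacle here: all the substance sits in Lemma~\ref{limit-kernel}, whose compactness argument has already been supplied. The only points requiring care are the two interchanges just described, namely that the dual of the defining projective limit of $SK_1$ is the direct limit of duals, and that $H^2(G,\mathbb{Q}_p/\mathbb{Z}_p)$ is computed as a direct limit along inflation. Both are routine once one records that filtered direct limits are exact and hence commute with the formation of kernels.
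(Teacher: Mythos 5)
Your proof is correct and is essentially the paper's own argument: the paper gives no separate proof of Corollary~\ref{profinite}, since it follows immediately by combining the displayed limit isomorphism obtained from \eqref{f:finite} with Lemma~\ref{limit-kernel}, exactly as you do. Your additional care about the two interchanges (duality turning the projective limit of finite $SK_1$-groups into a direct limit of duals, and $H^2(G,\mathbb{Q}_p/\mathbb{Z}_p)=\varinjlim_N H^2(G/N,\mathbb{Q}_p/\mathbb{Z}_p)$ along inflation, both via exactness of filtered colimits) is precisely what the paper leaves implicit.
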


In order to use this criterion we consider the connecting homomorphism
\begin{equation*}
    \delta : H^1(G, \mathbb{Q}_p/\mathbb{Z}_p) \longrightarrow H^2(G,\mathbb{F}_p)
\end{equation*}
in the cohomology sequence for multiplication by $p$ on $\mathbb{Q}_p/\mathbb{Z}_p$.

\begin{lemma}\label{connecting}
Suppose that $G$ is torsion free; then $SK_1(\Lambda(G))$ vanishes if and only if the sequence
\begin{equation*}
    0 \longrightarrow H^1(G, \mathbb{Q}_p/\mathbb{Z}_p)/p \xrightarrow{\; \delta \;} H^2(G,\mathbb{F}_p) \xrightarrow{\; {\rm res} \;}  \prod_{A \subseteq G} H^2(A,\mathbb{F}_p)
\end{equation*}
is exact.
\end{lemma}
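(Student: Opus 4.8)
The plan is to compare the $\mathbb{F}_p$-coefficient picture appearing in the displayed sequence with the $\mathbb{Q}_p/\mathbb{Z}_p$-coefficient description of $SK_1$ supplied by Corollary \ref{profinite}. Write $K := \ker(\mathrm{res}\colon H^2(G,\mathbb{Q}_p/\mathbb{Z}_p) \to \prod_{A} H^2(A,\mathbb{Q}_p/\mathbb{Z}_p))$ and $\bar K := \ker(\mathrm{res}\colon H^2(G,\mathbb{F}_p) \to \prod_{A} H^2(A,\mathbb{F}_p))$, with $A$ ranging over the closed abelian subgroups of $G$. By Corollary \ref{profinite} we have $K = SK_1(\Lambda(G))^\vee$; since $SK_1(\Lambda(G))$ is a pro-$p$ group, $K$ is $p$-primary torsion, so $K = 0$ if and only if $K[p] = 0$. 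Hence it suffices to show that the displayed sequence is exact if and only if $K[p] = 0$. Feeding the short exact sequence $0 \to \mathbb{F}_p \to \mathbb{Q}_p/\mathbb{Z}_p \xrightarrow{\,p\,} \mathbb{Q}_p/\mathbb{Z}_p \to 0$ into continuous cohomology yields the long exact sequence whose connecting map is $\delta$; from it I read off that $\delta$ is injective on $H^1(G,\mathbb{Q}_p/\mathbb{Z}_p)/p$ with $\im\delta = \ker\beta$, where $\beta\colon H^2(G,\mathbb{F}_p) \to H^2(G,\mathbb{Q}_p/\mathbb{Z}_p)$ is induced by the inclusion $\mathbb{F}_p \hookrightarrow \mathbb{Q}_p/\mathbb{Z}_p$ and has image $H^2(G,\mathbb{Q}_p/\mathbb{Z}_p)[p]$. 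Exactness of the displayed sequence thus means precisely $\im\delta = \bar K$, that is, $\ker\beta = \bar K$.

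The crucial input concerns the closed abelian subgroups, and this is where I expect the main obstacle to lie. Since $G$ is a compact (being pro-$p$) torsion-free $p$-adic Lie group, every closed abelian subgroup $A$ is a finitely generated torsion-free abelian pro-$p$ group, hence $A \cong \mathbb{Z}_p^{d}$. Consequently $H^1(A,\mathbb{Q}_p/\mathbb{Z}_p) = \Hom^{cont}(A,\mathbb{Q}_p/\mathbb{Z}_p) \cong (\mathbb{Q}_p/\mathbb{Z}_p)^{d}$ is $p$-divisible, so the analogous connecting map $\delta_A$ vanishes; equivalently $\beta_A\colon H^2(A,\mathbb{F}_p) \to H^2(A,\mathbb{Q}_p/\mathbb{Z}_p)$ is injective, and therefore so is $\prod_A \beta_A$. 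Naturality of the connecting homomorphism under restriction gives $\mathrm{res}_A \circ \delta = \delta_A \circ \mathrm{res}_A = 0$ for every such $A$, whence $\im\delta \subseteq \bar K$ automatically. So the only real content is the reverse inclusion $\bar K \subseteq \ker\beta$, i.e. $\beta(\bar K) = 0$.

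To finish I would establish $\beta(\bar K) = K[p]$ by a diagram chase in the commutative square relating $\beta$ and the $\beta_A$ to the two restriction maps. For $x \in \bar K$, commutativity together with $\mathrm{res}(x) = 0$ yields $\mathrm{res}(\beta(x)) = \prod_A\beta_A(\mathrm{res}(x)) = 0$, so $\beta(x) \in K$; as $\im\beta = H^2(G,\mathbb{Q}_p/\mathbb{Z}_p)[p]$ this gives $\beta(\bar K) \subseteq K[p]$. Conversely, any $z \in K[p]$ lies in $\im\beta$, say $z = \beta(x)$; then $\prod_A\beta_A(\mathrm{res}(x)) = \mathrm{res}(z) = 0$, and the injectivity of $\prod_A\beta_A$ forces $\mathrm{res}(x) = 0$, i.e. $x \in \bar K$, so $z \in \beta(\bar K)$. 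Hence $\beta(\bar K) = K[p]$, and the displayed sequence is exact if and only if $\beta(\bar K) = 0$, if and only if $K[p] = 0$, if and only if $K = 0$, if and only if $SK_1(\Lambda(G)) = 0$, as desired. The delicate point remains the vanishing $\delta_A = 0$ (equivalently the injectivity of $\prod_A\beta_A$): this is exactly where torsion-freeness enters, via the identification of the closed abelian subgroups with groups having $p$-divisible character duals, and without it $\im\delta$ need not lie in $\bar K$ and the comparison collapses.
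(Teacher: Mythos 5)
Your proposal is correct and takes essentially the same route as the paper: both arguments rest on Corollary \ref{profinite}, on the observation that every closed abelian subgroup satisfies $A \cong \mathbb{Z}_p^s$ so that $H^1(A,\mathbb{Q}_p/\mathbb{Z}_p)$ is divisible and $H^2(A,\mathbb{F}_p) \to H^2(A,\mathbb{Q}_p/\mathbb{Z}_p)$ is injective with image the $p$-torsion, and on the long exact sequence for multiplication by $p$ on $\mathbb{Q}_p/\mathbb{Z}_p$. The paper simply packages your explicit chase (the identity $\beta(\bar{K}) = K[p]$ and the reduction $K=0 \Leftrightarrow K[p]=0$) into a single commutative diagram with exact column, leaving those steps implicit.
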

\begin{proof}
Any closed abelian subgroup $A \subseteq G$ is of the form $A \cong \mathbb{Z}_p^s$ for some $s \in \mathbb{N}$. It follows that $H^1(A, \mathbb{Q}_p/\mathbb{Z}_p) \cong (\mathbb{Q}_p/\mathbb{Z}_p)^s$ is divisible and further that $H^2(A,\mathbb{F}_p) = H^2(A, \mathbb{Q}_p/\mathbb{Z}_p)[p]$. We therefore obtain the exact commutative diagram:
\begin{equation*}
    \xymatrix{
      0 \ar[d]  &  \\
      H^1(G, \mathbb{Q}_p/\mathbb{Z}_p)/p \ar[d]_{\delta} & \\
      H^2(G,\mathbb{F}_p) \ar[d] \ar[r] & \prod_A H^2(A,\mathbb{F}_p) \ar[d]^{\cong}  \\
      H^2(G, \mathbb{Q}_p/\mathbb{Z}_p)[p] \ar[d] \ar[r] & \prod_A H^2(A, \mathbb{Q}_p/\mathbb{Z}_p)[p] \\
      0  &    }
\end{equation*}
By Cor.\ \ref{profinite} the group $SK_1(\Lambda(G))$ vanishes if and only if the lower horizontal map in this diagram is injective.
\end{proof}

\section{Finiteness}

We abbreviate $\mathfrak{g} := \Lie_{\mathbb{Q}_p}(G)$. Let $Z$ denote the center of $G$ and $\mathfrak{z} := \Lie_{\mathbb{Q}_p}(Z)$ its Lie algebra. According to \cite{B-LL} III\S9 Ex.\ 4 we have
\begin{equation*}
    \mathfrak{z} = \{\mathfrak{x} \in \mathfrak{g} : \mathrm{Ad}(g)(\mathfrak{x}) = \mathfrak{x} \ \textrm{for any $g \in G$}\}.
\end{equation*}
\cite{B-LL} III\S9.3 Prop.\ 7 then implies that $\mathfrak{z}$ is contained in the center of $\mathfrak{g}$.

\begin{theorem}\label{finiteness}
Suppose that $\mathfrak{z}$ is the center of $\mathfrak{g}$ and that the Lie algebra $\mathfrak{g}/\mathfrak{z}$ of $G/Z$ is semisimple; then $SK_1(\Lambda(G))$ is finite.
\end{theorem}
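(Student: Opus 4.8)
The plan is to run the cohomological criterion of Corollary \ref{profinite}, but to extract everything from the single closed abelian subgroup given by the center $Z$ of $G$. Since $SK_1(\Lambda(G)) = \varprojlim_N SK_1(\mathbb{Z}_p[G/N])$ is a pro-$p$ abelian group, Pontrjagin duality exchanges finiteness with finiteness, so it suffices to prove that $SK_1(\Lambda(G))^\vee$ is finite. By Corollary \ref{profinite} this group is the kernel of restriction to all closed abelian subgroups, hence is contained in $K := \ker\big(H^2(G,\mathbb{Q}_p/\mathbb{Z}_p) \to H^2(Z,\mathbb{Q}_p/\mathbb{Z}_p)\big)$, the kernel of restriction to the center alone (the center is a closed abelian subgroup, so one of the $A$). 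Thus the whole statement reduces to showing that $K$ is finite.

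First I would record a structural fact. As $G$ is a compact $p$-adic Lie group, $H^2(G,\mathbb{F}_p)$ is finite, so $H^2(G,\mathbb{Q}_p/\mathbb{Z}_p)[p]$ is finite and $H^2(G,\mathbb{Q}_p/\mathbb{Z}_p)$ is a $p$-primary group of finite corank, i.e. isomorphic to $(\mathbb{Q}_p/\mathbb{Z}_p)^r \oplus (\mathrm{finite})$. Consequently its subgroup $K$ is finite if and only if it contains no copy of $\mathbb{Q}_p/\mathbb{Z}_p$, i.e. iff its maximal divisible subgroup vanishes. From the coefficient sequence $0 \to \mathbb{Z}_p \to \mathbb{Q}_p \to \mathbb{Q}_p/\mathbb{Z}_p \to 0$ together with the finiteness of the torsion in the finitely generated module $H^3(G,\mathbb{Z}_p)$, one checks that the maximal divisible subgroup of $H^2(G,\mathbb{Q}_p/\mathbb{Z}_p)$ is exactly the image of the $\mathbb{Q}_p$-vector space $V := H^2(G,\mathbb{Q}_p)$, and that the kernel of $V \to H^2(G,\mathbb{Q}_p/\mathbb{Z}_p)$ is the full lattice $L := \im\big(H^2(G,\mathbb{Z}_p) \to V\big)$, so that $V/L \cong (\mathbb{Q}_p/\mathbb{Z}_p)^r$.

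The Lie-theoretic heart is to show that restriction to the center is \emph{injective} on rational cohomology, i.e. that $H^2(G,\mathbb{Q}_p) \to H^2(Z,\mathbb{Q}_p)$ is an isomorphism. Here I would pass to Lie algebras via the Lazard isomorphism, which is natural for the inclusion $Z \hookrightarrow G$ and hence intertwines group-cohomology restriction with the Lie-algebra restriction along $\mathfrak{z} \hookrightarrow \mathfrak{g}$. By hypothesis $\mathfrak{z}$ is the center of $\mathfrak{g}$ and $\mathfrak{s} := \mathfrak{g}/\mathfrak{z}$ is semisimple, so $0 \to \mathfrak{z} \to \mathfrak{g} \to \mathfrak{s} \to 0$ is a central extension. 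The second Whitehead lemma gives $H^2(\mathfrak{s},\mathfrak{z}) = 0$, so the extension splits and $\mathfrak{g} \cong \mathfrak{s} \times \mathfrak{z}$ as Lie algebras (the action is trivial since $\mathfrak{z}$ is central). By Künneth, $H^2(\mathfrak{g},\mathbb{Q}_p) = \bigoplus_{i+j=2} H^i(\mathfrak{s},\mathbb{Q}_p) \otimes H^j(\mathfrak{z},\mathbb{Q}_p)$, and the Whitehead lemmas $H^1(\mathfrak{s},\mathbb{Q}_p) = H^2(\mathfrak{s},\mathbb{Q}_p) = 0$ collapse this to the single term $i=0$, namely $H^2(\mathfrak{z},\mathbb{Q}_p) = \bigwedge^2 \mathfrak{z}^\ast$. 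Tracing the Künneth identification shows that the surviving term is precisely the image of restriction to $\mathfrak{z}$, so restriction $H^2(\mathfrak{g},\mathbb{Q}_p) \to H^2(\mathfrak{z},\mathbb{Q}_p)$ is an isomorphism, and likewise $H^2(G,\mathbb{Q}_p) \to H^2(Z,\mathbb{Q}_p)$.

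I would then combine these. Suppose for contradiction that $K$ contains a copy $D' \cong \mathbb{Q}_p/\mathbb{Z}_p$. Being divisible, $D'$ lies in the maximal divisible subgroup $V/L$, so it lifts to a $\mathbb{Q}_p$-line $\ell \subseteq V$ with $\ell \cap L$ a rank-one lattice and image generating $D'$. Since $D' \subseteq K$ restricts to $0$ in $H^2(Z,\mathbb{Q}_p/\mathbb{Z}_p)$, commutativity of the square relating the two coefficient sequences forces the restriction of $\ell$ to land in the lattice $\im\big(H^2(Z,\mathbb{Z}_p) \to H^2(Z,\mathbb{Q}_p)\big)$. But restriction is an isomorphism on $H^2(-,\mathbb{Q}_p)$, so the image of $\ell$ is a nonzero $\mathbb{Q}_p$-line, which cannot be contained in a lattice. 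This contradiction shows $K$ has trivial divisible part, hence is finite, and therefore $SK_1(\Lambda(G))$ is finite. I expect the main obstacle to be this last interface between $\mathbb{Q}_p$- and $\mathbb{Q}_p/\mathbb{Z}_p$-coefficients — pinning down the maximal divisible subgroup as the image of rational cohomology and making the line-lifting contradiction rigorous — together with verifying that the Lazard isomorphism is natural enough to identify restriction to $Z$ with restriction to $\mathfrak{z}$; by contrast the purely Lie-theoretic step is immediate from the Whitehead lemmas once the central splitting is in hand.
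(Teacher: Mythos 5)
Your proof is correct in outline, but it takes a genuinely different route from the paper's, so it is worth recording both the difference and the debts your route incurs. The shared part is the reduction: by Cor.~\ref{profinite}, $SK_1(\Lambda(G))^\vee$ lies in the kernel $K$ of restriction to the single closed abelian subgroup $Z$, so everything hinges on finiteness of $K$. From there the paper never invokes the Lazard isomorphism for $G$ itself or for the inclusion $Z \hookrightarrow G$: it applies Lazard and the Whitehead lemmata only to (sufficiently small open subgroups of) the quotient $G/Z$, whose Lie algebra is semisimple, converts the rational vanishing into finiteness of $H^i(G/Z,\mathbb{Q}_p/\mathbb{Z}_p)$ for $i=1,2$ via the corank relation, and then squeezes $K$ with the Hochschild--Serre sequence of $1 \to Z \to G \to G/Z \to 1$: modulo the finite image of $H^2(G/Z,\mathbb{Q}_p/\mathbb{Z}_p)$, the group $K$ embeds into the finite group $H^1(G/Z, H^1(Z,\mathbb{Q}_p/\mathbb{Z}_p))$. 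You instead split $\mathfrak{g} \cong \mathfrak{s} \oplus \mathfrak{z}$ by the second Whitehead lemma, show by K\"unneth that restriction $H^2(\mathfrak{g},\mathbb{Q}_p) \to H^2(\mathfrak{z},\mathbb{Q}_p)$ is an isomorphism, transport this to group cohomology through Lazard, and kill the divisible part of $K$ by the lattice/line-lifting argument. Your route yields the sharper structural fact that restriction to the center is injective on rational cohomology, which exactly locates the divisible part of $H^2(G,\mathbb{Q}_p/\mathbb{Z}_p)$; the paper's route buys precisely what you flag as obstacles, since it needs neither naturality of the Lazard map nor any comparison of the three coefficient sequences.

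Three points need attention to make your argument airtight, none of them fatal. First, all your groups $H^i(G,\mathbb{Q}_p)$ and $H^i(G,\mathbb{Z}_p)$ must be continuous cochain cohomology $H^i_c$ in the sense of the paper, since discrete cohomology of a profinite group with $\mathbb{Q}_p$-coefficients vanishes in positive degrees; with that reading, the coefficient sequence, the finite generation of $H^i_c(G,\mathbb{Z}_p)$ over $\mathbb{Z}_p$, and your identification of the maximal divisible subgroup are all standard. Second, in this theorem $G$ is only a pro-$p$ $p$-adic Lie group --- the standing uniformity assumption is imposed only later in the paper --- so the Lazard isomorphism cited there applies only to sufficiently small open subgroups; for $G$ itself one has in general $H^\ast_c(G,\mathbb{Q}_p) \cong H^\ast(\mathfrak{g},\mathbb{Q}_p)^G$, the invariants under the adjoint action. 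This does not derail you: injectivity of restriction on the invariant part is all you need, and in fact the action on $H^2(\mathfrak{g},\mathbb{Q}_p) \cong \bigwedge^2 \mathfrak{z}^\ast$ is trivial here because $\mathfrak{z}$ is exactly the space of $\mathrm{Ad}(G)$-fixed vectors in $\mathfrak{g}$ (as recalled at the start of the section); also $Z$ may contain torsion, which is harmless rationally. Third, the naturality of the Lazard isomorphism with respect to $Z \hookrightarrow G$ that you single out is indeed true --- this is the sort of compatibility established in \cite{HKN} --- so it is citable rather than a gap, but it is a genuine import that the paper's proof is engineered to avoid.
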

\begin{proof}
In a first step we assume that $\mathfrak{g}$ is semisimple. Then
\begin{equation*}
    H^1(\mathfrak{g},\mathbb{Q}_p) = H^2(\mathfrak{g},\mathbb{Q}_p) = 0
\end{equation*}
by Whitehead's lemmata (cf.\ \cite{Wei} 7.8.10 and 7.8.12). The Lazard isomorphism for continuous group cohomology (\cite{Laz} V.2.4.10) says that
\begin{equation*}
    H^\ast_c(G',\mathbb{Q}_p) \cong H^\ast(\mathfrak{g},\mathbb{Q}_p)
\end{equation*}
for any sufficiently small open subgroup $G' \subseteq G$. Furthermore, we have (cf.\ \cite{NSW} 2.7.12)
\begin{equation*}
    \dim_{\mathbb{Q}_p} H^\ast_c(G',\mathbb{Q}_p) = \mathrm{corank}_{\mathbb{Z}_p} H^\ast(G',\mathbb{Q}_p/\mathbb{Z}_p) \ .
\end{equation*}
Altogether it follows that the groups $H^1(G',\mathbb{Q}_p/\mathbb{Z}_p)$ and $H^2(G',\mathbb{Q}_p/\mathbb{Z}_p)$ are finite. A straightforward argument with the Hochschild-Serre spectral sequence then implies that also the groups $H^1(G,\mathbb{Q}_p/\mathbb{Z}_p)$ and $H^2(G,\mathbb{Q}_p/\mathbb{Z}_p)$ are finite. Under the assumption that $\mathfrak{g}$ is semisimple we therefore have established the stronger assertion that already the middle term in the identity of Cor.\  \ref{profinite} is finite.

Now let $\mathfrak{g}$ be arbitrary as in the assertion. By our first step we know that the groups $H^1(G/Z,\mathbb{Q}_p/\mathbb{Z}_p)$ and $H^2(G/Z,\mathbb{Q}_p/\mathbb{Z}_p)$ are finite. The Hochschild-Serre spectral sequence gives rise to an embedding
\begin{equation*}
    0 \longrightarrow \frac{\ker(H^2(G,\mathbb{Q}_p/\mathbb{Z}_p) \rightarrow H^2(Z,\mathbb{Q}_p/\mathbb{Z}_p))} {\im(H^2(G/Z,\mathbb{Q}_p/\mathbb{Z}_p) \rightarrow H^2(G,\mathbb{Q}_p/\mathbb{Z}_p))} \longrightarrow H^1(G/Z,H^1(Z,\mathbb{Q}_p/\mathbb{Z}_p))
\end{equation*}
The denominator of the left term is finite. Since $Z$ is a finitely generated $\mathbb{Z}_p$-module its cohomology $H^1(Z,\mathbb{Q}_p/\mathbb{Z}_p)$ is a cofinitely generated abelian group which, moreover, is trivial as a $G/Z$-module. Hence the right hand term is finite as well. We conclude that $M := \ker(H^2(G,\mathbb{Q}_p/\mathbb{Z}_p) \rightarrow H^2(Z,\mathbb{Q}_p/\mathbb{Z}_p))$ is finite. Using Cor.\ \ref{profinite} we now consider the exact diagram:
\begin{equation*}
    \xymatrix{
       & & 0 \ar[d] &  \\
       & & SK_1(\Lambda(G))^\vee \ar[d] & & \\
        0 \ar[r] & M  \ar[r] & H^2(G, \mathbb{Q}_p/\mathbb{Z}_p) \ar[d] \ar[r] & H^2(Z, \mathbb{Q}_p/\mathbb{Z}_p)  \\
       & & \prod_A H^2(A, \mathbb{Q}_p/\mathbb{Z}_p) \ar@{-->}[ur] & \\
      }
\end{equation*}
Clearly there is the oblique arrow which makes the diagram commutative. The diagram therefore induces a surjection
\begin{equation*}
    \Hom(M , \mathbb{Q}_p/\mathbb{Z}_p) \longrightarrow SK_1(\Lambda(G)) \longrightarrow 0
\end{equation*}
and shows in this way that $SK_1(\Lambda(G))$ is finite.
\end{proof}

In the light of Cor.\ \ref{profinite} the following fact is of some interest. We assume that
\begin{equation*}
    \textrm{$G$ is saturated rationally equi-$p$-valued}
\end{equation*} in the sense of \cite{Laz} . In this situation Lazard has constructed in \cite{Laz} IV.3.3.6 a $\mathbb{Z}_p$-Lie subalgebra $\mathcal{L} := \mathcal{L}(G) \subseteq \mathfrak{g}$ which is a $\mathbb{Z}_p$-lattice. Furthermore, he also has shown in \cite{Laz} V.2.5.7.1, that the cohomology algebra
\begin{equation*}
    H^\ast(G, \mathbb{F}_p) = \bigwedge H^1(G, \mathbb{F}_p)
\end{equation*}
is the exterior algebra over $H^1(G, \mathbb{F}_p)$. Both of these results will be of fundamental importance for everything which follows in this paper.

\begin{proposition}\label{integral}
Suppose that $G$ satisfies the above condition and that
$\mathfrak{g}$ is semisimple, and put $d := \dim G =
\dim_{\mathbb{Q}_p} \mathfrak{g}$; we then have $\dim_{\mathbb{F}_p} H^2(G, \mathbb{Q}_p/\mathbb{Z}_p)[p] = \binom{d}{2} - d$.
\end{proposition}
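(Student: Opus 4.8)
The plan is to extract the quantity $\dim_{\mathbb{F}_p} H^2(G, \mathbb{Q}_p/\mathbb{Z}_p)[p]$ from the long exact cohomology sequence attached to the multiplication-by-$p$ short exact sequence $0 \to \mathbb{F}_p \to \mathbb{Q}_p/\mathbb{Z}_p \xrightarrow{\; p \;} \mathbb{Q}_p/\mathbb{Z}_p \to 0$. Reading off the portion around degrees $1$ and $2$, exactness yields the short exact sequence
\begin{equation*}
    0 \longrightarrow H^1(G, \mathbb{Q}_p/\mathbb{Z}_p)/p \xrightarrow{\; \delta \;} H^2(G, \mathbb{F}_p) \longrightarrow H^2(G, \mathbb{Q}_p/\mathbb{Z}_p)[p] \longrightarrow 0 \ ,
\end{equation*}
where $\delta$ is the connecting homomorphism introduced before Lemma \ref{connecting} and the right-hand map is induced by $\mathbb{F}_p \hookrightarrow \mathbb{Q}_p/\mathbb{Z}_p$, its image being exactly the $p$-torsion. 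All three terms are finite $\mathbb{F}_p$-vector spaces: for the outer two this uses the finiteness of $H^1(G,\mathbb{Q}_p/\mathbb{Z}_p)$ established in the first step of the proof of Theorem \ref{finiteness} together with Lazard's exterior-algebra description. Consequently the desired dimension equals $\dim_{\mathbb{F}_p} H^2(G,\mathbb{F}_p) - \dim_{\mathbb{F}_p} H^1(G,\mathbb{Q}_p/\mathbb{Z}_p)/p$.

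It then remains to compute these two dimensions. First I would record that $H^1(G,\mathbb{F}_p) = \Hom(G,\mathbb{F}_p) = \Hom(G/\Phi(G),\mathbb{F}_p)$ has dimension $d$, since for a uniform group $G/\Phi(G) \cong (\mathbb{Z}/p)^d$. By Lazard's result $H^\ast(G,\mathbb{F}_p) = \bigwedge H^1(G,\mathbb{F}_p)$, whence $\dim_{\mathbb{F}_p} H^2(G,\mathbb{F}_p) = \binom{d}{2}$. Second, the same multiplication-by-$p$ sequence gives in low degree the identification $H^1(G,\mathbb{F}_p) = H^1(G,\mathbb{Q}_p/\mathbb{Z}_p)[p]$, the contribution of $H^0(G,\mathbb{Q}_p/\mathbb{Z}_p)/p$ vanishing because $\mathbb{Q}_p/\mathbb{Z}_p$ is divisible. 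Writing $M := H^1(G,\mathbb{Q}_p/\mathbb{Z}_p)$, which is a finite abelian $p$-group, one has $\dim_{\mathbb{F}_p} M/p = \dim_{\mathbb{F}_p} M[p]$, so that $\dim_{\mathbb{F}_p} H^1(G,\mathbb{Q}_p/\mathbb{Z}_p)/p = d$. Substituting the two values gives $\binom{d}{2} - d$.

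The computation is essentially a bookkeeping exercise once the short exact sequence is in place, so there is no single hard step; the genuine analytic input is borrowed, namely the finiteness of $H^1(G,\mathbb{Q}_p/\mathbb{Z}_p)$, which rests on Whitehead's first lemma $H^1(\mathfrak{g},\mathbb{Q}_p)=0$ for semisimple $\mathfrak{g}$ transported through the Lazard isomorphism. The point where I would be most careful is the equality $\dim_{\mathbb{F}_p} M/p = \dim_{\mathbb{F}_p} M[p]$: it holds precisely because $M$ is finite, which is exactly why semisimplicity (rather than mere reductivity) of $\mathfrak{g}$ is needed. For a nonzero center $H^1(G,\mathbb{Q}_p/\mathbb{Z}_p)$ would acquire a divisible part $(\mathbb{Q}_p/\mathbb{Z}_p)^r$ of positive corank, and there $M/p$ would be strictly smaller than $M[p]$, breaking the clean count.
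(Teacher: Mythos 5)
Your proof is correct, but it takes a genuinely different route from the paper's. The paper works with continuous cohomology with $\mathbb{Z}_p$-coefficients: it invokes the integral sharpening of the Lazard isomorphism due to Huber--Kings--Naumann, $H^\ast_c(G,\mathbb{Z}_p)\otimes_{\mathbb{Z}_p}\mathcal{O}\cong H^\ast(\mathcal{L},\mathbb{Z}_p)\otimes_{\mathbb{Z}_p}\mathcal{O}$, together with Whitehead's lemmata to conclude that $H^1_c(G,\mathbb{Z}_p)=0$ and that $H^2_c(G,\mathbb{Z}_p)$ is finite, and then runs the Bockstein sequence for $0\to\mathbb{Z}_p\xrightarrow{\,p\,}\mathbb{Z}_p\to\mathbb{F}_p\to 0$, obtaining $H^1(G,\mathbb{F}_p)\cong H^2_c(G,\mathbb{Z}_p)/p$ and the exact sequence $0\to H^2_c(G,\mathbb{Z}_p)/p\to H^2(G,\mathbb{F}_p)\to H^3_c(G,\mathbb{Z}_p)[p]=H^2(G,\mathbb{Q}_p/\mathbb{Z}_p)[p]\to 0$, before finishing with the same count. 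You instead stay entirely with the discrete coefficients $\mathbb{F}_p$ and $\mathbb{Q}_p/\mathbb{Z}_p$, using the Bockstein for multiplication by $p$ on $\mathbb{Q}_p/\mathbb{Z}_p$ (the same sequence that defines $\delta$ before Lemma \ref{connecting}); the role played in the paper by the vanishing $H^1_c(G,\mathbb{Z}_p)=0$ is played in your argument by the finiteness of $M:=H^1(G,\mathbb{Q}_p/\mathbb{Z}_p)$ --- legitimately borrowed from the first step of the proof of Theorem \ref{finiteness}, which precedes this proposition, so there is no circularity --- combined with the elementary equality $\dim_{\mathbb{F}_p}M/p=\dim_{\mathbb{F}_p}M[p]$ for finite abelian $p$-groups and the low-degree identification $M[p]=H^1(G,\mathbb{F}_p)$. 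What your route buys is economy: no appeal to the integral (HKN) refinement is needed, only the rational Lazard isomorphism already consumed by Theorem \ref{finiteness}. What the paper's route buys is the extra information $H^1_c(G,\mathbb{Z}_p)=0$ and a proof self-contained at the level of the proposition rather than leaning on the earlier finiteness argument; your closing observation about why semisimplicity (rather than mere reductivity) is indispensable is also accurate. One cosmetic caution: the standing hypothesis here is Lazard's condition ``saturated rationally equi-$p$-valued'' (the blanket assumption of uniformity is imposed only after this proposition), so it is cleaner to justify $\dim_{\mathbb{F}_p}H^1(G,\mathbb{F}_p)=d$ via $G/G^p$ being an $\mathbb{F}_p$-vector space of dimension $d$, as the paper does, rather than via the Frattini quotient of a uniform group; the underlying fact is the same, so this is not a gap.
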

\begin{proof}
Under our assumption on $G$ the Lazard isomorphism was sharpened in
\cite{HKN}Thm.\ 3.1.1 to an integral isomorphism
\begin{equation*}
    H^\ast_c(G,\mathbb{Z}_p) \otimes_{\mathbb{Z}_p} \mathcal{O} \cong H^\ast(\mathcal{L},\mathbb{Z}_p) \otimes_{\mathbb{Z}_p} \mathcal{O}
\end{equation*}
where $\mathcal{O}$ is the ring of integers in an appropriate finite extension $K$ of $\mathbb{Q}_p$.
Using Whitehead's lemma we in particular obtain
\begin{equation*}
    H^i_c(G,\mathbb{Z}_p) \otimes_{\mathbb{Z}_p}
    K = H^i(\mathcal{L},\mathbb{Z}_p)
    \otimes_{\mathbb{Z}_p} K = H^i(\mathfrak{g},K) =
    0
\end{equation*}
for $i = 1, 2$.
It follows that $H^2_c(G,\mathbb{Z}_p)$ must be finite and that, since
$H^1_c(G,\mathbb{Z}_p)$ by construction is torsion free, we even
have $H^1_c(G,\mathbb{Z}_p) = 0$. The cohomology sequence for
multiplication by $p$ on $\mathbb{Z}_p$ therefore gives
\begin{equation*}
    H^1(G,\mathbb{F}_p) = H^2_c(G,\mathbb{Z}_p)[p] \cong H^2_c(G,\mathbb{Z}_p)/p
\end{equation*}
as well as the exact sequence
\begin{equation*}
    0 \longrightarrow H^2_c(G,\mathbb{Z}_p)/p \longrightarrow
    H^2(G,\mathbb{F}_p) \longrightarrow H^3_c(G,\mathbb{Z}_p)[p] = H^2(G,
    \mathbb{Q}_p/\mathbb{Z}_p)[p] \longrightarrow 0 \ .
\end{equation*}
It follows that
\begin{equation*}
    \dim_{\mathbb{F}_p} H^2(G, \mathbb{Q}_p/\mathbb{Z}_p)[p] =
    \dim_{\mathbb{F}_p} H^2(G,
    \mathbb{F}_p) - \dim_{\mathbb{F}_p} H^1(G, \mathbb{F}_p) \ .
\end{equation*}
By our assumption on the group $G$ the $p$-th powers $G^p$ form a
normal subgroup such that $G/G^p$ is an $\mathbb{F}_p$-vector space
of dimension $d$. Hence $\dim_{\mathbb{F}_p} H^1(G, \mathbb{F}_p) =
d$. But $H^2(G, \mathbb{F}_p) = \bigwedge^2 H^1(G, \mathbb{F}_p)$. Hence $\dim_{\mathbb{F}_p} H^2(G, \mathbb{F}_p) = \binom{d}{2}$.
\end{proof}

\begin{corollary}\label{SL_2}
For any open subgroup $G \subseteq SL_2(\mathbb{Z}_p)$ which satisfies the above condition we have $SK_1(\Lambda(G)) = 0$.
\end{corollary}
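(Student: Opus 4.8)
The plan is to turn the statement into a dimension count, using Proposition \ref{integral} to kill the relevant cohomology and then reading off the vanishing from Corollary \ref{profinite}. The crucial numerical coincidence is that for $\mathfrak{sl}_2$ one has $\binom{d}{2} = d$.

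First I would note that an open subgroup $G \subseteq SL_2(\mathbb{Z}_p)$ has Lie algebra $\mathfrak{g} = \mathfrak{sl}_2(\mathbb{Q}_p)$, which is semisimple of dimension $d = \dim G = 3$. Thus the hypotheses of Proposition \ref{integral} are satisfied, and the count recorded there gives
\begin{equation*}
    \dim_{\mathbb{F}_p} H^2(G, \mathbb{Q}_p/\mathbb{Z}_p)[p] = \binom{3}{2} - 3 = 0 \ ,
\end{equation*}
so that $H^2(G, \mathbb{Q}_p/\mathbb{Z}_p)[p] = 0$.

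To finish I would invoke finiteness together with Corollary \ref{profinite}. Since $\mathfrak{g} = \mathfrak{sl}_2(\mathbb{Q}_p)$ is semisimple (so $\mathfrak{z} = 0$ is its center), Theorem \ref{finiteness} applies and $SK_1(\Lambda(G))$ is a finite abelian $p$-group; hence so is its Pontrjagin dual $SK_1(\Lambda(G))^\vee$. By Corollary \ref{profinite} this dual embeds into $H^2(G, \mathbb{Q}_p/\mathbb{Z}_p)$, and taking $p$-torsion yields an embedding $SK_1(\Lambda(G))^\vee[p] \hookrightarrow H^2(G, \mathbb{Q}_p/\mathbb{Z}_p)[p] = 0$. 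A nontrivial finite abelian $p$-group always has nontrivial $p$-torsion, so $SK_1(\Lambda(G))^\vee = 0$ and therefore $SK_1(\Lambda(G)) = 0$.

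The only point requiring a little care is the passage from the vanishing of the $p$-torsion to an actual vanishing statement: this is exactly where the finiteness input is used, since for a finite (or more generally torsion without divisible part) abelian $p$-group the condition $[p] = 0$ forces triviality. Equivalently, one could first upgrade the above to $H^2(G, \mathbb{Q}_p/\mathbb{Z}_p) = 0$ using the finiteness of this group established in the first step of the proof of Theorem \ref{finiteness}, and then apply Corollary \ref{profinite} directly; alternatively the same numerology $\dim_{\mathbb{F}_p} H^1(G,\mathbb{F}_p) = 3$ and $\dim_{\mathbb{F}_p} H^2(G,\mathbb{F}_p) = \binom{3}{2} = 3$ could be fed into the exactness criterion of Lemma \ref{connecting}. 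I do not expect any genuine obstacle beyond verifying this finiteness bookkeeping.
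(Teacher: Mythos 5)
Your proof is correct, and its core coincides with the paper's: feed $d=3$ into Proposition \ref{integral} to get $\dim_{\mathbb{F}_p} H^2(G,\mathbb{Q}_p/\mathbb{Z}_p)[p] = \binom{3}{2}-3 = 0$, then conclude via Corollary \ref{profinite}. Where you diverge is only the final bookkeeping. The paper's one-line proof asserts $H^2(G,\mathbb{Q}_p/\mathbb{Z}_p)=0$ outright; the implicit point is that for a pro-$p$ group $G$ this cohomology group is $p$-primary torsion (it is a direct limit of cohomology groups of the finite $p$-groups $G/N$ with $p$-primary coefficients, each killed by a power of $p$), so vanishing of its $p$-torsion kills the whole group, and Corollary \ref{profinite} finishes. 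You instead route through Theorem \ref{finiteness} so as to transfer the torsion argument to $SK_1(\Lambda(G))^\vee$, which you correctly identify as a finite abelian $p$-group (Theorem \ref{finiteness} does apply here, since $Z$ is trivial and $\mathfrak{sl}_2$ is semisimple, and $SK_1(\Lambda(G))$ is pro-$p$ by its construction as $\varprojlim_N SK_1(\mathbb{Z}_p[G/N])$). This is valid and self-contained, but heavier than necessary: the Lazard/Whitehead/Hochschild--Serre machinery behind Theorem \ref{finiteness} can be avoided entirely, because the Pontrjagin dual of \emph{any} abelian pro-$p$ group is $p$-primary torsion, so $SK_1(\Lambda(G))^\vee[p]=0$ already forces $SK_1(\Lambda(G))^\vee=0$ without any finiteness input. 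One caution about your closing alternative: the implication \emph{finite abelian with trivial $p$-torsion implies trivial} is false in general (such a group may have prime-to-$p$ torsion), so that variant also secretly needs the $p$-primariness observation; your main argument is immune to this, since the dual of a finite abelian pro-$p$ group genuinely is a $p$-group.
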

\begin{proof}
In this case we have $d=3$ so that $H^2(G,
\mathbb{Q}_p/\mathbb{Z}_p)=0$ by Prop.\ \ref{integral}.
\end{proof}

From now on we always will assume that our group $G$ is uniform in the sense of \cite{DDMS}. In Lazard's language this is equivalent to $G$ being saturated integrally $p$-valued.

\section{The connecting homomorphism}

Our goal in this section is to
explicitly compute the connecting homomorphism $\delta$ in Lemma \ref{connecting} (under our standing assumption that $G$ is uniform, of course).

As recalled before the proof of Prop.\ \ref{integral} we have
\begin{equation*}
    H^2(G,\mathbb{F}_p) = \bigwedge^2 H^1(G,\mathbb{F}_p) = \bigwedge^2 \Hom(G/G^p, \mathbb{F}_p) =\Hom(\bigwedge^2 (G/G^p),\mathbb{F}_p) \ .
\end{equation*}
we also have noted already that in a uniform group the subset $G^p := \{g^p : g \in G\}$ in fact is an open normal subgroup. The quotient $G/G^p$, of course, is an $\mathbb{F}_p$-vector space.
For simpler reasons (cf.\ \cite{Ser} p.\ 117) the corresponding fact
\begin{equation*}
    H^2(A,\mathbb{F}_p) = \bigwedge^2 H^1(A,\mathbb{F}_p) = \bigwedge^2 \Hom(A/A^p, \mathbb{F}_p) = \Hom(\bigwedge^2 (A/A^p),\mathbb{F}_p)
\end{equation*}
holds true for any $A \cong \mathbb{Z}_p^s$. In the following we write $V := G/G^p$ and we introduce in $\bigwedge^2 V$ the $\mathbb{F}_p$-vector subspace $\bigwedge V$ generated by all elements of the form $aG^p \wedge bG^p$ where $a$ and $b$ are contained in the same  closed abelian subgroup of $G$ (so that $[a,b] = 1$). Then
\begin{equation*}
    \ker\big( H^2(G,\mathbb{F}_p) \longrightarrow \prod_A H^2(A,\mathbb{F}_p) \big) = \Hom(\bigwedge^2 V \big/ \bigwedge V, \mathbb{F}_p) \ .
\end{equation*}
The connecting homomorphism $\delta$, by Pontrjagin duality, gives rise to a homomorphism
\begin{equation*}
    \delta^\vee : \bigwedge^2 V \longrightarrow G/[G,G] =: G^{ab} \ .
\end{equation*}
We obtain the complex
\begin{equation}\label{f:complex}
    0 \longrightarrow \bigwedge V \xrightarrow{\; \subseteq \;} \bigwedge^2 V \xrightarrow{\; \delta^\vee \;} G^{ab}[p] \longrightarrow 0 \ ,
\end{equation}
which is exact except possibly in the middle, together with the following reformulation of Lemma \ref{connecting}.

\begin{lemma}\label{exactness}
We have $SK_1(\Lambda(G)) = 0$ if and only if the complex \eqref{f:complex} is exact.
\end{lemma}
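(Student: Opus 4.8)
The plan is to recognize \eqref{f:complex} as nothing but the Pontrjagin dual of the three-term sequence appearing in Lemma \ref{connecting}, and to observe that exactness is preserved under this duality because every group in sight is a finite-dimensional $\mathbb{F}_p$-vector space. Since a uniform group is torsion free, Lemma \ref{connecting} applies and tells us that $SK_1(\Lambda(G)) = 0$ holds if and only if the sequence
\begin{equation*}
0 \longrightarrow H^1(G, \mathbb{Q}_p/\mathbb{Z}_p)/p \xrightarrow{\; \delta \;} H^2(G,\mathbb{F}_p) \xrightarrow{\; \mathrm{res} \;} \prod_{A} H^2(A,\mathbb{F}_p)
\end{equation*}
is exact; as noted before the lemma, the injectivity of $\delta$ on the left is automatic, so the genuine content is the equality $\im(\delta) = \ker(\mathrm{res})$.

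First I would record the relevant dual identifications. For the abelian pro-$p$ group $G^{ab}$ the $p$-multiplication sequence dualizes to give $H^1(G,\mathbb{Q}_p/\mathbb{Z}_p)/p = (G^{ab})^\vee/p \cong (G^{ab}[p])^\vee$. The identities
\begin{equation*}
H^2(G,\mathbb{F}_p) = \Hom(\bigwedge^2 V,\mathbb{F}_p), \qquad \ker(\mathrm{res}) = \Hom(\bigwedge^2 V/\bigwedge V,\mathbb{F}_p)
\end{equation*}
are exactly the ones assembled just before the lemma, so $\ker(\mathrm{res})$ is the annihilator of $\bigwedge V$ inside $(\bigwedge^2 V)^\vee$. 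Finally, by the very definition of $\delta^\vee$, the map $\delta$ is the Pontrjagin (equivalently, since everything is an $\mathbb{F}_p$-vector space, the linear) dual of $\delta^\vee \colon \bigwedge^2 V \to G^{ab}[p]$.

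With these identifications in hand the argument is pure finite-dimensional duality. Since $G$ is uniform, $V = G/G^p$, $\bigwedge^2 V$ and $G^{ab}[p]$ are all finite, so taking annihilators is an inclusion-reversing involution on subspaces. For a linear map one has $\im(f^\vee) = (\ker f)^\perp$; applying this to $f = \delta^\vee$ turns the condition $\im(\delta) = \ker(\mathrm{res})$ into $(\ker\delta^\vee)^\perp = (\bigwedge V)^\perp$, hence into $\ker\delta^\vee = \bigwedge V$, which is precisely exactness of \eqref{f:complex} in the middle. It then remains to justify the phrase ``exact except possibly in the middle'': exactness at $\bigwedge V$ is the triviality that the inclusion is injective, and surjectivity of $\delta^\vee$ at $G^{ab}[p]$ is dual to the injectivity of $\delta$, which is automatic because $\delta$ is the connecting map whose kernel is $p\,H^1(G,\mathbb{Q}_p/\mathbb{Z}_p)$. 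That \eqref{f:complex} is a complex at all, i.e.\ $\bigwedge V \subseteq \ker\delta^\vee$, dualizes the vanishing $\mathrm{res}\circ\delta = 0$, which holds because the analogous connecting map for each $A \cong \mathbb{Z}_p^s$ is zero, $H^1(A,\mathbb{Q}_p/\mathbb{Z}_p)$ being divisible. Combining these, \eqref{f:complex} is exact if and only if it is exact in the middle, if and only if the sequence of Lemma \ref{connecting} is exact, if and only if $SK_1(\Lambda(G)) = 0$.

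The proof is essentially formal, so there is no deep obstacle; the one point that requires care is keeping the direction of the duality straight — in particular verifying that the two end terms of \eqref{f:complex} are automatically exact, so that the genuine content is concentrated in the middle and matches the single nontrivial condition $\im(\delta) = \ker(\mathrm{res})$ of Lemma \ref{connecting}.
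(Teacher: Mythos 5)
Your proof is correct and follows exactly the route the paper intends: the paper states Lemma \ref{exactness} as an immediate reformulation of Lemma \ref{connecting} via the identifications $H^2(G,\mathbb{F}_p) = \Hom(\bigwedge^2 V,\mathbb{F}_p)$, $\ker(\mathrm{res}) = \Hom(\bigwedge^2 V/\bigwedge V,\mathbb{F}_p)$ and the Pontrjagin dual $\delta^\vee$ assembled just before it, and your argument simply makes the underlying finite-dimensional duality (including the automatic exactness at the two end terms) explicit.
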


On the other hand it is easy to see that
\begin{align*}
    \partial \qquad : V \wedge V & \longrightarrow (G^p/[G^p,G])[p] \\
    gG^p \wedge hG^p & \longmapsto [g,h] \bmod [G^p,G]
\end{align*}
is a well defined $\mathbb{F}_p$-linear map. In oder to compare these two maps we claim that
\begin{align*}
    \varphi : \qquad G^{ab} & \longrightarrow G^p/[G^p,G] \\
               g[G,G] & \longmapsto g^p [G^p,G]
\end{align*}
is well defined and multiplicative. This will be a consequence of the Hall-Petrescu formula (cf.\ \cite{B-LL} II \S5 Ex.\ 9). First of all, 9.d) implies that
\begin{equation*}
    [g,h]^p \equiv [g^p,h] \equiv 1 \bmod [G^p,G]
\end{equation*}
for any $g,h \in G$. Using 9.a) we then conclude that
\begin{align*}
    (gh)^p & \equiv g^p h^p (h^{-1} [h^{-1},g^{-1}]h)^{\binom{p}{2}} \bmod [G^p,G] \\
    & \equiv g^p h^p (h^{-1} [h^{-1},g^{-1}]^p h)^{\frac{p-1}{2}} \bmod [G^p,G] \\
    & \equiv g^p h^p \bmod [G^p,G]
\end{align*}
for any $g,h \in G$. In particular, if $h = [h_1,h_2]$ is any commutator, then
\begin{equation*}
    (gh)^p \equiv g^p [h_1,h_2]^p \equiv g^p \bmod [G^p,G] \ .
\end{equation*}

\begin{lemma}\label{phi-iso}
$\varphi$ is an isomorphism.
\end{lemma}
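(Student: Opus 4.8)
The plan is to prove that $\varphi$ is bijective, handling surjectivity and injectivity separately; the real content will be packaged into one identity between commutator subgroups. For surjectivity I would use that, since $G$ is uniform, the set of $p$-th powers $\{g^{p}:g\in G\}$ is already equal to the subgroup $G^{p}$. Hence the image of $G^{p}$ in $G^{p}/[G^{p},G]$ is generated by the classes $g^{p}\bmod[G^{p},G]$, and these are precisely the elements $\varphi(g[G,G])$. Thus $\im\varphi$ is a subgroup containing a generating set of the target, and $\varphi$ is onto.

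For injectivity the key reduction is to the single containment
\[
  [G^{p},G]\subseteq [G,G]^{p},
\]
where $[G,G]^{p}=\{c^{p}:c\in[G,G]\}$. Granting this, suppose $g[G,G]\in\ker\varphi$, i.e.\ $g^{p}\in[G^{p},G]$. Then $g^{p}\in[G,G]^{p}$, so $g^{p}=c^{p}$ for some $c\in[G,G]$; here I use that $[G,G]$ is powerfully embedded in $G$, hence powerful, so that $[G,G]^{p}$ really is this set of $p$-th powers. Because $G$ is uniform the $p$-th power map is injective (unique $p$-th roots), so $g=c\in[G,G]$ and $g[G,G]=1$; thus $\ker\varphi=0$. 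As a reality check, the inclusion $[G^{p},G]\subseteq[G,G]$ shows immediately that $\ker\varphi\subseteq G^{ab}[p]$, so injectivity is really a statement about the $p$-torsion of $G^{ab}$, which is exactly where this phenomenon lives (and why the full identification of the two groups, rather than merely their free parts, is needed).

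It remains to establish $[G^{p},G]\subseteq[G,G]^{p}$, and this is where I expect the main obstacle to be. Since $G^{p}$ is generated by the $g^{p}$, it suffices to show $[g^{p},h]\in[G,G]^{p}$ for all $g,h\in G$. I would expand $[g^{p},h]$ by the same Hall--Petrescu formula used above: the leading term is $[g,h]^{p}\in[G,G]^{p}$, and every correction term is an iterated commutator of weight $\geq 3$ in $g,h$, hence lies in $[[G,G],G]$. The powerfully embedded inclusion $[[G,G],G]\subseteq[G,G]^{p}$, which rests on the structure theory of uniform groups and on $p\neq 2$, then places all the correction terms in $[G,G]^{p}$ as well, giving the containment; the careful bookkeeping of these higher collection terms is the delicate point. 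The reverse inclusion $[G,G]^{p}\subseteq[G^{p},G]$ follows formally from the congruences already proved, namely $[g,h]^{p}\equiv 1$ and $(xy)^{p}\equiv x^{p}y^{p}$ modulo $[G^{p},G]$, so in fact $[G^{p},G]=[G,G]^{p}$. Conceptually this is just the statement that, under Lazard's identification of $G$ with the $\mathbb{Z}_p$-lattice $\mathcal{L}(G)$ on which the $p$-th power map is multiplication by $p$, the map $\varphi$ is multiplication by $p$ on $\mathcal{L}/[\mathcal{L},\mathcal{L}]$, an isomorphism because $\mathcal{L}$ is torsion free; the group-theoretic argument above is the elementary translation of this.
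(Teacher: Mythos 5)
Your proof is correct, and its skeleton is in fact the same as the paper's: surjectivity because $G^{p}$ consists exactly of $p$-th powers, and injectivity by reducing to the containment $[G^{p},G]\subseteq [G,G]^{p}$ followed by extraction of a $p$-th root inside $[G,G]$. The difference lies in how the two ingredients are supplied. The paper outsources both to \cite{GS}: Thm.\ 3.4 ($G$ is a torsion-free PF-group), Prop.\ 2.1(3)--(5) (the identity $[G^{p},G]=[G,G]^{p}$, with $[G,G]$ PF-embedded), and Prop.\ 2.2 (the descent $g^{p}\in[G,G]^{p}\Rightarrow g\in[G,G]$). Your descent step is a legitimate elementary substitute: powerfully embedded implies powerful, so $[G,G]^{p}$ is literally the set $\{c^{p}:c\in[G,G]\}$, and the $p$-power map of a uniform group is injective (\cite{DDMS} Thm.\ 3.6 and Lemma 4.10 supply exactly these facts). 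The one place where the gain in elementarity is illusory is your Hall--Petrescu derivation of $[G^{p},G]\subseteq[G,G]^{p}$: it consumes the unproved input $[[G,G],G]\subseteq[G,G]^{p}$, i.e.\ precisely the powerfully-embeddedness of $[G,G]$, and that fact is of essentially the same depth as the containment you derive from it --- for groups that are merely powerful such identities can fail, and for uniform groups both facts are established together by the PF-filtration machinery of \cite{GS}, which is what the paper cites. Your argument is non-circular only because that input has an independent proof there; you have relocated, not eliminated, the appeal to structure theory. Finally, your concluding Lie-theoretic gloss needs a small correction: $\varphi$ corresponds to multiplication by $p$ viewed as a map $\mathcal{L}/[\mathcal{L},\mathcal{L}]\to p\mathcal{L}/p[\mathcal{L},\mathcal{L}]$, which is bijective because $p:\mathcal{L}\to p\mathcal{L}$ is; as a self-map of $\mathcal{L}/[\mathcal{L},\mathcal{L}]$ multiplication by $p$ need not be injective, since that quotient can have $p$-torsion (it does for the group of Section \ref{s:counterex}).
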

\begin{proof}
The surjectivity is obvious. For the injectivity we first of all note that according to \cite{GS} Thm.\ 3.4 our group $G$ is a torsion free
PF-group. By loc.\ cit. Prop.\ 2.1(3)-(5) the commutator subgroup
$[G,G]$ therefore is a PF-embedded subgroup which satisfies $[G,G]^p
= [G^p,G] = [G^p,G]$. Let now $g[G,G]$ be any element in the kernel
of $\varphi$. This means that $g \in [G^p,G] = [G,G]^p$. Then loc.\
cit.\ Prop.\ 2.2 implies that $g \in [G,G]$.
\end{proof}

\begin{proposition}\label{partial}
$\varphi \circ \delta^\vee = \partial$.
\end{proposition}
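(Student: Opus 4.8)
The plan is to compute the dual map $\delta^\vee$ explicitly on a generator $gG^p \wedge hG^p$, to recognise the answer as a $p$-th root of the commutator $[g,h]$, and then to observe that postcomposing with the $p$-power isomorphism $\varphi$ returns exactly $[g,h] \bmod [G^p,G] = \partial(gG^p \wedge hG^p)$. By the duality set up before the statement, $\delta^\vee(gG^p\wedge hG^p)$ is the unique class $z[G,G] \in G^{ab}[p]$ characterised by
\[
  \chi(z) = \langle\, \delta(\chi),\, gG^p \wedge hG^p \,\rangle \quad \text{for all } \chi \in H^1(G,\mathbb{Q}_p/\mathbb{Z}_p) = \Hom(G,\mathbb{Q}_p/\mathbb{Z}_p),
\]
where the pairing on the right is the evaluation pairing between $H^2(G,\mathbb{F}_p) = \Hom(\bigwedge^2 V,\mathbb{F}_p)$ and $\bigwedge^2 V$. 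So the whole problem reduces to evaluating this pairing.

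To evaluate it I would represent $\delta(\chi)$, the Bockstein of $\chi$ for multiplication by $p$ on $\mathbb{Q}_p/\mathbb{Z}_p$, by the explicit central extension obtained by pulling back $0 \to \mathbb{F}_p \to \mathbb{Q}_p/\mathbb{Z}_p \xrightarrow{p} \mathbb{Q}_p/\mathbb{Z}_p \to 0$ along $\chi$, namely
\[
  1 \longrightarrow \mathbb{F}_p \longrightarrow E_\chi \longrightarrow G \longrightarrow 1, \qquad E_\chi := \{(g,x) \in G \times \mathbb{Q}_p/\mathbb{Z}_p : \chi(g) = px\},
\]
with $\mathbb{F}_p = (\mathbb{Q}_p/\mathbb{Z}_p)[p]$ sitting centrally as $\{1\}\times \mathbb{F}_p$. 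The step I expect to be the true obstacle is to show that, under Lazard's identification $H^2(G,\mathbb{F}_p) = \bigwedge^2 H^1(G,\mathbb{F}_p) = \Hom(\bigwedge^2 V,\mathbb{F}_p)$, the value $\langle \delta(\chi), gG^p \wedge hG^p\rangle$ is computed by the \emph{commutator defect} of $E_\chi$: choosing lifts $\tilde g,\tilde h,\tilde w$ and letting $w \in G$ be the unique element (uniformity gives unique $p$-th roots) with $w^p = [g,h]$, this defect is $[\tilde g,\tilde h]\,\tilde w^{-p} \in \mathbb{F}_p$. That this recipe really computes the exterior-algebra pairing I would pin down on cup products: for $\alpha,\beta \in H^1(G,\mathbb{F}_p)$ a direct computation in the corresponding extension gives defect $\alpha(g)\beta(h) - \alpha(h)\beta(g)$, which is precisely the value of $\alpha\wedge\beta$ on $gG^p\wedge hG^p$; since such cup products span $H^2$, the defect form agrees with the evaluation pairing on all of $H^2$. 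One must also check that the defect is alternating and bilinear and vanishes when either argument lies in $G^p$, so that it descends to $\bigwedge^2 V$; here the facts $[G,G]\subseteq G^p$, $[G^p,G]=[G,G]^p$, the uniqueness of $p$-th roots, and $\chi|_{[G,G]}=0$ are used.

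Granting this, the computation in $E_\chi$ is immediate. As $E_\chi$ is a subgroup of $G\times\mathbb{Q}_p/\mathbb{Z}_p$, the second coordinates cancel in any commutator, so $[\tilde g,\tilde h] = ([g,h],0)$; on the other hand for a lift $\tilde w = (w,x_w)$ one has $\tilde w^p = (w^p, p x_w) = ([g,h],\chi(w))$ independently of the choice of $x_w$, and $\chi(w)\in\mathbb{F}_p$ because $p\chi(w) = \chi(w^p) = \chi([g,h]) = 0$. Hence the defect equals $(1, \chi(w))$ up to sign, i.e.\ $\langle\delta(\chi), gG^p\wedge hG^p\rangle = \chi(w)$ up to an overall sign depending on the fixed conventions for the connecting homomorphism, the cup product and the commutator. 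Fixing these conventions consistently (so that the cup-product check above carries the orientation built into $\partial$) yields $\delta^\vee(gG^p\wedge hG^p) = w[G,G]$ in $G^{ab}[p]$, valid for all $\chi$. Applying $\varphi$ then gives
\[
  \varphi\big(\delta^\vee(gG^p\wedge hG^p)\big) = \varphi(w[G,G]) = w^p[G^p,G] = [g,h]\,[G^p,G] = \partial(gG^p\wedge hG^p),
\]
which is the asserted identity. The remaining work is the sign bookkeeping and the verification of the pairing recipe flagged above.
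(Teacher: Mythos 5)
Your argument is correct in substance, but it takes a genuinely different route from the paper's. The paper proves the Pontryagin-dual identity $\delta \circ \varphi^\vee = \partial^\vee$ by explicit cocycle computations: it fixes an ordered basis $(g_1,\ldots,g_d)$ of $G$, constructs a concrete continuous set-theoretic lift $\tilde f$ of a character $f$, evaluates the coboundary $d_{\tilde f}$ by reordering products modulo $[G^p,G]$ (Hall--Petrescu manipulations, unique $p$-th roots, Lemma \ref{phi-iso}), and then matches the result against a cup-product computation of $\partial^\vee$. You instead work directly on elements: you realize the Bockstein $\delta(\chi)$ as the pullback extension $E_\chi \subseteq G \times \mathbb{Q}_p/\mathbb{Z}_p$ and evaluate classes of $H^2(G,\mathbb{F}_p)$ on $gG^p \wedge hG^p$ through the commutator defect $[\tilde g,\tilde h]\,\tilde w^{-p}$, which for $E_\chi$ is computed in one line because commutators kill the second coordinate. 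This is basis-free, replaces the paper's longest computation by structural arguments, and needs from group theory only $[G,G]\subseteq G^p$ and uniqueness of $p$-th roots (it does not even use that $\varphi$ is injective). What your route requires in exchange: (1) the identification of continuous $H^2$ with classes of topological extensions and of $\delta(\chi)$ with the class of $E_\chi$, which is fine and compatible with the paper's convention for $\delta$, since the section $g \mapsto (g,\tilde\chi(g))$ has cocycle $d_{\tilde\chi}$; (2) the fact that the defect is a well-defined \emph{linear} functional of the cohomology class --- independence of the representing extension and additivity under Baer sum, or equivalently linearity in a representing cocycle plus vanishing on split extensions (for a homomorphic section $s$ one has $[s(g),s(h)] = s([g,h]) = s(w)^p$, so the defect is trivial); this is routine, but it is precisely what makes your cup-product check together with Lazard's theorem (cup products of $H^1$-classes span $H^2$) sufficient, so it should be stated explicitly; (3) the sign. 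Concerning (3): with the most common conventions ($[a,b]=a^{-1}b^{-1}ab$, the identification $\alpha\wedge\beta \mapsto \alpha\cup\beta$, and $\alpha\cup\beta$ represented by $(g,h)\mapsto\alpha(g)\beta(h)$) your computation in $E_\chi$ yields defect $-\chi(w)$, so one lands on the inverse of $\partial$ rather than $\partial$; the paper faces the same ambiguity and resolves it by orienting the identification $\bigwedge^2 H^1(G,\mathbb{F}_p)=H^2(G,\mathbb{F}_p)$ with an extra minus sign (visible in the cocycle it writes for $\partial^\vee(f)$ in its Step 3). Since $\varphi$ is an isomorphism and everything downstream (Cor.\ \ref{S-criterion}) only uses $\ker\delta^\vee = \ker\partial$, this sign is harmless, but to obtain the literal identity $\varphi \circ \delta^\vee = \partial$ you must fix the cup-product identification the same way the paper does, as you yourself flag.
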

\begin{proof}
We will in fact establish the dual identity $\delta \circ \varphi^\vee = \partial^\vee$.

\textit{Step 1:} We compute the connecting homomorphism $\delta : \Hom^{cont}(G, \mathbb{Q}_p/\mathbb{Z}_p) \longrightarrow H^2(G,\mathbb{F}_p)$, where here and in the following the superscript ``cont'' indicates the subgroup of continuous homomorphisms. Let $f \in \Hom^{cont}(G, \mathbb{Q}_p/\mathbb{Z}_p)$ and let $\tilde{f} : G \longrightarrow \mathbb{Q}_p/\mathbb{Z}_p$ be any continuous map (of sets) such that $p \tilde{f} = f$. Then $\delta(f) = [d_{\tilde{f}}]$ is the class of the 2-cocycle $d_{\tilde{f}}(g,h) := - \tilde{f}(gh) + \tilde{f}(g) + \tilde{f}(h)$ on $G$. In order to make a suitable choice for $\tilde{f}$ we fix an ordered basis $(g_1, \ldots,g_d)$ of $G$. For any $p$-adic integer $c \in \mathbb{Z}_p$ we let $0 \leq \bar{c} \leq p-1$ denote the unique integer such that $c \equiv \bar{c} \bmod p$. If $g = g_1^{c_1} \ldots g_d^{c_d} \in G$ is any element we find a unique element $\pi(g) \in G$ such that
\begin{equation*}
    g = g_1^{\bar{c}_1} \ldots g_d^{\bar{c}_d} \pi(g)^p
\end{equation*}
(cf.\ \cite{DDMS} \S4.2). We fix elements $f_1, \ldots, f_d \in \mathbb{Q}_p/\mathbb{Z}_p$ such that $pf_i = f(g_i)$, and we choose
\begin{equation*}
    \tilde{f}(g) := \sum_{i=1}^d \bar{c}_i f_i + f(\pi(g)) \ .
\end{equation*}
Let now $g = g_1^{a_1} \ldots g_d^{a_d}$ and $h = g_1^{b_1} \ldots g_d^{b_d}$ be any two elements in $G$. Then
\begin{equation*}
    \tilde{f}(g) = \sum_{i=1}^d \bar{a}_i f_i + f(\pi(g)) \qquad\textrm{and}\qquad \tilde{f}(h) = \sum_{i=1}^d \bar{b}_i f_i + f(\pi(h)) \ .
\end{equation*}
On the other hand setting
\begin{equation*}
    p_i :=
          \begin{cases}
             p & \textrm{if $\bar{a}_i + \bar{b}_i
                \geq p$} , \\
             0 & \textrm{otherwise}
          \end{cases}
\end{equation*}
we have
\begin{align*}
    gh & = g_1^{\bar{a}_1} \ldots g_d^{\bar{a}_d} \pi(g)^p g_1^{\bar{b}_1} \ldots g_d^{\bar{b}_d} \pi(h)^p \\
& \equiv g_1^{\bar{a}_1 + \bar{b}_1} \ldots g_d^{\bar{a}_d + \bar{b}_d} \pi(g)^p \pi(h)^p \prod_{i > j} [g_i^{-\bar{a}_i} , g_j^{-\bar{b}_j}] \bmod [G^p,G] \\
& \equiv g_1^{\bar{a}_1 + \bar{b}_1 - p_1} \ldots g_d^{\bar{a}_d + \bar{b}_d - p_d}
\pi(g)^p \pi(h)^p (\prod_{\bar{a}_j + \bar{b}_j
                \geq p} g_j^p) \prod_{i > j} [g_i , g_j]^{\bar{a}_i \bar{b}_j} \bmod [G^p,G] \ .
\end{align*}
Since $[G^p,G] = [G,G]^p$ this means that
\begin{equation*}
    gh = g_1^{\bar{a}_1 + \bar{b}_1 - p_1} \ldots g_d^{\bar{a}_d + \bar{b}_d - p_d}
\pi(g)^p \pi(h)^p (\prod_{\bar{a}_j + \bar{b}_j
                \geq p} g_j^p) \prod_{i > j} [g_i , g_j]^{\bar{a}_i \bar{b}_j} \gamma_1^p
\end{equation*}
for some $\gamma_1 \in [G,G]$. The element $\pi(g)^p \pi(h)^p (\prod_{\bar{a}_j + \bar{b}_j
                \geq p} g_j^p) \prod_{i > j} [g_i , g_j]^{\bar{a}_i \bar{b}_j} \gamma_1^p$ lies in $G^p$ and hence is equal to $y^p$ for some $y \in G$. Since then
\begin{equation*}
    \varphi \big(\pi(g) \pi(h) (\prod_{\bar{a}_j + \bar{b}_j \geq p} g_j) \prod_{i > j} ([g_i , g_j]^{\frac{1}{p}})^{\bar{a}_i \bar{b}_j} \gamma_1 \big) = \varphi(y)
\end{equation*}
Lemma \ref{phi-iso} implies that
\begin{equation*}
    \pi(g) \pi(h) (\prod_{\bar{a}_j + \bar{b}_j \geq p} g_j) \prod_{i > j} ([g_i , g_j]^{\frac{1}{p}})^{\bar{a}_i \bar{b}_j} \gamma_1 = y \gamma_2
\end{equation*}
for some $\gamma_2 \in [G,G]$. We obtain
\begin{equation*}
    \pi(g)^p \pi(h)^p (\prod_{\bar{a}_j + \bar{b}_j
                \geq p} g_j^p) \prod_{i > j} [g_i , g_j]^{\bar{a}_i \bar{b}_j} \gamma_1^p = \big( \pi(g) \pi(h) (\prod_{\bar{a}_j + \bar{b}_j \geq p} g_j) \prod_{i > j} ([g_i , g_j]^{\frac{1}{p}})^{\bar{a}_i \bar{b}_j} \gamma_3 \big)^p
\end{equation*}
with $\gamma_3 := \gamma_1 \gamma_2^{-1} \in [G,G]$ and hence
\begin{equation*}
    gh = g_1^{\bar{a}_1 + \bar{b}_1 - p_1} \ldots g_d^{\bar{a}_d + \bar{b}_d - p_d} \big( \pi(g) \pi(h) (\prod_{\bar{a}_j + \bar{b}_j \geq p} g_j) \prod_{i > j} ([g_i , g_j]^{\frac{1}{p}})^{\bar{a}_i \bar{b}_j} \gamma_3 \big)^p \ .
\end{equation*}
This shows that
\begin{align*}
    \tilde{f}(gh) & = \sum_i (\bar{a}_i + \bar{b}_i - p_i)f_i + f \big(\pi(g) \pi(h) (\prod_{\bar{a}_j + \bar{b}_j \geq p} g_j) \prod_{i > j} ([g_i , g_j]^{\frac{1}{p}})^{\bar{a}_i \bar{b}_j} \gamma_3 \big) \\
& = \sum_i (\bar{a}_i + \bar{b}_i - p_i)f_i + f(\pi(g)) + f(\pi(h)) + \sum_{\bar{a}_j + \bar{b}_j \geq p} f(g_j) + \sum_{i > j} \bar{a}_i \bar{b}_j f([g_i , g_j]^{\frac{1}{p}}) \\
& = \sum_i (\bar{a}_i + \bar{b}_i)f_i + f(\pi(g)) + f(\pi(h)) + \sum_{i > j} \bar{a}_i \bar{b}_j f([g_i , g_j]^{\frac{1}{p}})
\end{align*}
and consequently that
\begin{equation*}
    d_{\tilde{f}}(g,h) = - \sum_{i > j} \bar{a}_i \bar{b}_j f([g_i , g_j]^{\frac{1}{p}}) \ .
\end{equation*}

\textit{Step 2:} Let $f \in (G^p/[G^p,G])^\vee = \Hom_G^{cont}(G^p, \mathbb{Q}_p/\mathbb{Z}_p)$ be a continuous $G$-equivariant (for the adjoint action of $G$ on $G^p$) homomorphism. We obtain from Step 1 that $\delta \circ \varphi^\vee (f)$ is the class of the 2-cocycle
\begin{equation}\label{f:cocycle}
    (g,h) = (g_1^{a_1} \ldots g_d^{a_d}, g_1^{b_1} \ldots g_d^{b_d}) \longmapsto - \sum_{i > j} \bar{a}_i \bar{b}_j f([g_i , g_j]) \ .
\end{equation}

\textit{Step 3:} We compute the map
\begin{equation*}
    \partial^\vee : \Hom_G^{cont}(G^p, \mathbb{Q}_p/\mathbb{Z}_p) \longrightarrow (\bigwedge^2 V)^\vee = \bigwedge^2 V^\vee = \bigwedge^2 H^1(G,\mathbb{F}_p) = H^2(G,\mathbb{F}_p) \ .
\end{equation*}
We emphasize that the identification $\bigwedge^2 H^1(G,\mathbb{F}_p) = H^2(G,\mathbb{F}_p)$ is
given by the cup-product. The elements $e_1 := g_1G^p, \ldots, e_d := g_dG^p$ form an $\mathbb{F}_p$-basis of $V$; let $e_1^\vee, \ldots, e_d^\vee$ denote the dual
basis. Let $f \in \Hom_G^{cont}(G^p, \mathbb{Q}_p/\mathbb{Z}_p)$. Then $\partial^\vee(f)$ is given,
in $(\bigwedge^2 V)^\vee$, by $e_i \wedge e_j \longmapsto f([g_i,g_j])$. We observe that under the
identification $(\bigwedge^2 V)^\vee = \bigwedge^2 V^\vee$ the basis dual to $e_i \wedge e_j$ (for
$i > j$) corresponds to $e_i^\vee \wedge e_j^\vee$. Hence
\begin{equation*}
    \partial^\vee(f) = \sum_{i > j} f([g_i,g_j]) e_i^\vee \wedge e_j^\vee
\end{equation*}
in $\bigwedge^2 V^\vee$. Finally applying the cup-product shows that in $H^2(G,\mathbb{F}_p)$ the class $\partial^\vee(f)$ is given by the 2-cocycle
\begin{align*}
    (g,h) = (g_1^{a_1} \ldots g_d^{a_d}, g_1^{b_1} \ldots g_d^{b_d}) \longmapsto - \sum_{i > j} f([g_i,g_j]) e_i^\vee(gG^p)e_j^\vee(hG^p) = - \sum_{i > j} \bar{a}_i \bar{b}_j f([g_i , g_j]) \ .
\end{align*}
This coincides with \eqref{f:cocycle} and therefore establishes our assertion.
\end{proof}

\begin{corollary}\label{S-criterion}
We have $SK_1(\Lambda(G)) = 0$ if and only if $\bigwedge V = \ker \partial$.
\end{corollary}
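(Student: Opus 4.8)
The plan is to assemble the results already established, since this corollary is essentially the bookkeeping step that packages Lemma \ref{exactness}, Proposition \ref{partial}, and Lemma \ref{phi-iso} into a single statement. The starting point is Lemma \ref{exactness}, which asserts that $SK_1(\Lambda(G)) = 0$ holds if and only if the complex \eqref{f:complex} is exact. Since that complex is already known to be exact everywhere except possibly at the middle term, the whole question reduces to exactness at $\bigwedge^2 V$.

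First I would reformulate exactness in the middle of \eqref{f:complex} in concrete terms. The left-hand arrow is the inclusion $\bigwedge V \hookrightarrow \bigwedge^2 V$, whose image is by definition the subspace $\bigwedge V$. Consequently the complex is exact in the middle precisely when $\ker \delta^\vee = \bigwedge V$.

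Next I would eliminate $\delta^\vee$ in favour of $\partial$. By Proposition \ref{partial} we have the identity $\partial = \varphi \circ \delta^\vee$, and by Lemma \ref{phi-iso} the map $\varphi$ is an isomorphism, hence in particular injective. Post-composing with an injective map leaves the kernel unchanged, so $\ker \partial = \ker(\varphi \circ \delta^\vee) = \ker \delta^\vee$. Combining this with the previous paragraph yields $SK_1(\Lambda(G)) = 0$ if and only if $\bigwedge V = \ker \delta^\vee = \ker \partial$, which is exactly the assertion.

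There is no genuine obstacle here: all of the substance sits in the earlier cocycle computation of Proposition \ref{partial} and in the injectivity of $\varphi$ furnished by the $PF$-group results cited in Lemma \ref{phi-iso}. The only point meriting a word of care is that $\varphi$ was introduced as an isomorphism $G^{ab} \to G^p/[G^p,G]$, whereas the maps $\delta^\vee$ and $\partial$ take values in the $p$-torsion subgroups $G^{ab}[p]$ and $(G^p/[G^p,G])[p]$; but an isomorphism automatically restricts to an isomorphism on $p$-torsion, so the equality $\ker \partial = \ker \delta^\vee$ is unaffected by this discrepancy, and the proof is complete.
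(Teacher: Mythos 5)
Your proof is correct and follows exactly the route the paper intends: the corollary is stated without proof precisely because it is the immediate assembly of Lemma \ref{exactness} (reducing the vanishing of $SK_1(\Lambda(G))$ to exactness of \eqref{f:complex} at the middle term, i.e.\ to $\ker\delta^\vee = \bigwedge V$) with Proposition \ref{partial} and the injectivity of $\varphi$ from Lemma \ref{phi-iso}, which give $\ker\partial = \ker\delta^\vee$. Your remark about restricting $\varphi$ to $p$-torsion is a correct and harmless point of care.
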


\section{A Lie criterion}

We recall that $\mathcal{L} := \mathcal{L}(G)$ denotes the $\mathbb{Z}_p$-Lie algebra of the uniform group $G$ in the sense of Lazard (cf.\ \cite{DDMS} \S4.5). As sets $G$ and $\mathcal{L}$ coincide. To avoid confusion we write, for any two elements $g,h \in G = \mathcal{L}$, in the following $[g,h]_G$ and $[g,h]_L$ for the commutators in the group $G$ and in the Lie algebra $\mathcal{L}$, respectively. The Lie algebra structure is given in terms of the group structure as follows:
\begin{itemize}
  \item[--] $g + h = \lim_{n \rightarrow \infty} (g^{p^n} h^{p^n})^{p^{-n}}$;
  \item[--] in particular, $xg = g^x$ for any $x \in \mathbb{Z}_p$;
  \item[--] $[g,h]_L = \lim_{n \rightarrow \infty} [g^{p^n}, h^{p^n}]_G^{p^{-2n}}$.
\end{itemize}
According to \cite{GS} Prop.\ 2.1(5) we have
\begin{equation}\label{f:p.e.}
    [G^p,G]_G = [G,G]_G^p = [G,G^p]_G
\end{equation}
and hence $[G,[G,G]_G]_G \subseteq [G,G^p]_G \subseteq [G,G]_G^p$, which says that $[G,G]_G$ is powerfully embedded in $G$ and therefore, in particular, is uniform. By \cite{GS} Prop.\ 2.1(3) and Thm.\ B(1) we have
\begin{equation*}
    \mathcal{L}([G,G]_G) = [\mathcal{L},\mathcal{L}]_L \qquad\textrm{and} \qquad \mathcal{L}([G^p,G]_G) = p[\mathcal{L},\mathcal{L}]_L \ .
\end{equation*}

\begin{remark}\label{add}
\begin{itemize}
  \item[i.] The map $G/G^p \xrightarrow{\; \id \;} \mathcal{L}/p \mathcal{L}$ is an isomorphism of $\mathbb{F}_p$-vector spaces.
  \item[ii.] The map $G^p/[G^p,G]_G \xrightarrow{\; \id \;} p\mathcal{L}/ [p\mathcal{L},\mathcal{L}]_L$ is an isomorphism of $\mathbb{Z}_p$-modules.
\end{itemize}
\end{remark}
\begin{proof}
i. See \cite{DDMS} Cor.\ 4.15. ii. Using Lemma \ref{phi-iso} it suffices to show that the map
$G/[G,G]_G \xrightarrow{\; \id \;} \mathcal{L}/ [\mathcal{L},\mathcal{L}]_L$ is an isomorphism of
$\mathbb{Z}_p$-modules. This means that we have to check that
\begin{equation*}
    gh \equiv g + h \bmod [G,G]_G
\end{equation*}
holds true for any $g,h \in G$. This will follow from part ii. of
the following lemma.
\end{proof}

\begin{lemma}
\begin{enumerate}
\item[i.]  $(gh)^{p^n}\equiv g^{p^n}h^{p^n} \bmod  [G,G]_G^{p^n}$ for all $n\geq 0.$
\item[ii.]$gh\equiv(g^{p^n}h^{p^n})^{p^{-n}}\bmod [G,G]_G$ for all $n\geq 0.$
\end{enumerate}
\end{lemma}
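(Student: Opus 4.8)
The plan is to prove part i.\ by induction on $n$ and then to obtain part ii.\ from part i.\ by extracting $p^n$-th roots. Write $C := [G,G]_G$; by \eqref{f:p.e.} and the ensuing remark $C$ is powerfully embedded in $G$, and in particular is itself uniform, so that $C^{p^m}$ is simultaneously the subgroup generated by, and the set $\{c^{p^m} : c \in C\}$ of, the $p^m$-th powers, and $(C^{p^m})^{p^k} = C^{p^{m+k}}$.

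For part i.\ the case $n = 0$ is trivial, and the case $n = 1$ is precisely the congruence $(gh)^p \equiv g^p h^p \bmod [G^p,G]_G = C^p$ already derived before Lemma \ref{phi-iso}. For the step from $n$ to $n+1$ I would write $(gh)^{p^{n+1}} = ((gh)^{p^n})^p$, substitute the inductive hypothesis $(gh)^{p^n} = g^{p^n} h^{p^n} c$ with $c \in C^{p^n}$, and set $a := g^{p^n}$ and $b := h^{p^n}$ in $G^{p^n}$; everything then reduces to $(abc)^p \equiv a^p b^p \bmod C^{p^{n+1}}$, since $a^p = g^{p^{n+1}}$ and $b^p = h^{p^{n+1}}$. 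Both $(abc)^p$ and $(ab)^p$ I would expand by the Hall--Petrescu formula (\cite{B-LL} II\S5 Ex.\ 9). In the expansion of $(abc)^p$ relative to $(ab)\cdot c$, the factor $c^p$ lies in $(C^{p^n})^p = C^{p^{n+1}}$, and every remaining correction term is a product of commutators each involving $c$, hence lies in $[G, C^{p^n}]_G \subseteq C^{p^{n+1}}$ by the powerful embedding; thus $(abc)^p \equiv (ab)^p \bmod C^{p^{n+1}}$. In the expansion of $(ab)^p$ with $a,b \in G^{p^n}$, the weight-$i$ commutators lie in $\gamma_i(\langle a,b\rangle) \subseteq [G^{p^n},G^{p^n}]_G \subseteq [G^{p^n},G]_G \subseteq C^{p^n}$; for $2 \le i \le p-1$ they occur with exponent $\binom{p}{i}$, which is divisible by $p$, so that term lands in $(C^{p^n})^p = C^{p^{n+1}}$, while the single weight-$p$ commutator lies in $\gamma_3(\langle a,b\rangle) \subseteq [G, C^{p^n}]_G \subseteq C^{p^{n+1}}$ because $p \geq 3$. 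Hence $(ab)^p \equiv a^p b^p \bmod C^{p^{n+1}}$ and the induction closes.

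For part ii.\ I would first isolate a root lemma: if $a^{p^n} \equiv b^{p^n} \bmod C^{p^n}$ then $a \equiv b \bmod C$. Applying part i.\ to the pair $(a,b^{-1})$ gives $(ab^{-1})^{p^n} \equiv a^{p^n} b^{-p^n} \bmod C^{p^n}$, so the hypothesis forces $(ab^{-1})^{p^n} \in C^{p^n} = \{d^{p^n} : d \in C\}$; writing $(ab^{-1})^{p^n} = d^{p^n}$ with $d \in C$ and using that the $p^n$-th power map on the torsion free group $G$ is injective, we obtain $ab^{-1} = d \in C$. Part ii.\ is then immediate: with $b := (g^{p^n} h^{p^n})^{p^{-n}}$ one has $b^{p^n} = g^{p^n} h^{p^n}$ by construction, and part i.\ gives $(gh)^{p^n} \equiv g^{p^n} h^{p^n} = b^{p^n} \bmod C^{p^n}$, whence the root lemma yields $gh \equiv (g^{p^n} h^{p^n})^{p^{-n}} \bmod C$.

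The main obstacle is the inductive step of part i.: not the mere existence of the Hall--Petrescu corrections, but the verification that they all already lie in the smaller subgroup $C^{p^{n+1}}$ rather than only in $C^{p^n}$. This rests on the combination of the powerful-embedding estimate $[G, C^{p^m}]_G \subseteq C^{p^{m+1}}$, the commutator bound $[G^{p^m},G]_G \subseteq C^{p^m}$ (which follows by the same kind of induction from \eqref{f:p.e.}), and the divisibility $p \mid \binom{p}{i}$ for $0 < i < p$; it is exactly here that the standing hypothesis $p \neq 2$, in the sharper form $p \geq 3$, enters, both through the binomial coefficients and through the control of the weight-$p$ term via $\gamma_3$. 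Once part i.\ is available, the passage to part ii.\ is formal, its only substantive ingredient being the injectivity of the $p^n$-th power map on $G$.
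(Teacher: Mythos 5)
Your proof is correct, and it follows the paper's general strategy---induction on $n$ via the Hall--Petrescu formula for part i., with the identical decomposition $(gh)^{p^{n+1}} = (g^{p^n}h^{p^n}\cdot c)^p$, $c \in [G,G]_G^{p^n}$, and a $p^n$-th root extraction for part ii.---but both of your key intermediate steps differ from the paper's. For part i.\ the paper first proves the sharper auxiliary congruence \eqref{f:2i}, namely $(gh)^p\equiv g^ph^p\bmod [G,G]_G^{p^{2i}}$ for all $g,h\in G^{p^i}$, by means of the graded lower-central-series estimate $\gamma_j(G^{p^i})\subseteq [G,G]_G^{p^{ij+j-2}}$, and then applies it twice in the induction step. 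You instead prove only what the induction needs, modulo $[G,G]_G^{p^{n+1}}$: all correction terms involving $c$ are absorbed into $[G,[G,G]_G^{p^n}]_G\subseteq [G,G]_G^{p^{n+1}}$, while for the pair $(g^{p^n},h^{p^n})$ you combine the crude bound $\gamma_2\subseteq [G,G]_G^{p^n}$ with the divisibility $p\mid\binom{p}{i}$ for $2\leq i\leq p-1$ and, for the weight-$p$ term, with $\gamma_p\subseteq\gamma_3$. Note that this last point is where $p\geq 3$ enters your argument, whereas the paper's graded estimate is strong enough that its proof of this lemma needs neither the binomial coefficients nor $p>2$ (that hypothesis is invoked explicitly only later, in Lemma \ref{comm-congr}). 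For part ii.\ the paper cites \cite{GS} Prop.\ 2.2 as a black box for the equivalence ``$x\in[G,G]_G$ if and only if $x^{p^n}\in[G,G]_G^{p^n}$''; you reprove the one implication you need from two standard facts about uniform groups, namely that $[G,G]_G^{p^n}$ is precisely the set of $p^n$-th powers of elements of the uniform group $[G,G]_G$, and that the $p^n$-power map on the torsion free group $G$ is injective. The trade-off: your route is leaner and more self-contained (no appeal to \cite{GS} Prop.\ 2.2 and no graded $\gamma_j$-estimate, at the cost of needing $p\geq 3$ here), while the paper's route yields stronger intermediate congruences of exactly the kind it reuses in the proof of Lemma \ref{comm-congr}.
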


\begin{proof}
i. According to the Hall-Petrescu formula \cite{DDMS} App.\ A, for
any group $F$ and $x,y\in F$ one has
\begin{equation*}
    (xy)^p=x^py^pc_2^{\binom{p }{2}}
    \cdots c_j^{\binom{p}{j}}\cdots c_{p-1}^pc_p
\end{equation*}
for some $c_j\in \gamma_j(F),$ where
$\gamma_{j+1}(F):=[\gamma_j(F),F]$ with $\gamma_1(F) := F$ denotes the lower central series. We apply this formula to the group $G^{p^i}$,
for any $i\geq 0$. Using
\begin{equation*}
    [G^{p^i},G^{p^j}]_G=[G,G]_G^{p^{i+j}}\subseteq G^{p^{i+j +1}}
\end{equation*}
by \cite{GS} Prop.\ 2.1(3)-(5) as well as (by induction)
\begin{equation*}
    \gamma_j(G^{p^i})\subseteq [G,G]_G^{p^{ij+j-2}}
    \qquad\textrm{for $i \geq 0$, $j\geq 2$}
\end{equation*}
we obtain
\begin{equation}\label{f:2i}
    (gh)^p\equiv g^ph^p\bmod [G,G]_G^{p^{2i}} \qquad\textrm{for
    $g,h\in G^{p^i}$ and $i\geq 0$.}
\end{equation}
We now prove the assertion via induction. The case $n=0$ is trivial
and the case $n=1$ was shown before Lemma \ref{phi-iso}. For $n \geq 1$ we
have, by the induction hypothesis, that
\begin{align*}
(gh)^{p^{n+1}} & = (g^{p^n}h^{p^n}y^{p^n})^p\\
 & \equiv (g^{p^n}h^{p^n})^py^{p^{n+1}}\bmod [G,G]_G^{p^{2n}}\\
 & \equiv g^{p^{n+1}}h^{p^{n+1}}  \bmod [G,G]_G^{p^{n+1}},
\end{align*}
for some $y\in [G,G]_G$ where for the two congruences we used \eqref{f:2i}.

ii. By \cite{GS} Prop.\ 2.2 the element
$(gh)^{-1}(g^{p^n}h^{p^n})^{p^{-n}}$ belongs to $[G,G]_G$ if and
only if $\big((gh)^{-1}(g^{p^n}h^{p^n})^{p^{-n}}\big)^{p^n}$ belongs
to $[G,G]_G^{p^n}.$ But by i. we do have
\begin{equation*}
\big((gh)^{-1}(g^{p^n}h^{p^n})^{p^{-n}}\big)^{p^n} \equiv
(gh)^{-p^n} g^{p^n}h^{p^n} \equiv 1 \bmod [G,G]_G^{p^n} \ .
\end{equation*}
\end{proof}

This means that source and target of the two maps
\begin{equation}\label{f:two-maps}
    G/G^p \wedge G/G^p \xrightarrow{\; [\; ,\; ]_G \;} G^p/[G^p,G] \qquad\textrm{and}\qquad \mathcal{L}/p \mathcal{L} \wedge \mathcal{L}/p \mathcal{L} \xrightarrow{\; [\; ,\; ]_L \;} p\mathcal{L}/ p[\mathcal{L},\mathcal{L}]_L
\end{equation}
coincide. The next lemma implies that, in fact, the two maps coincide.

\begin{lemma}\label{comm-congr}
$[g,h]_G \equiv [g^{p^n},h^{p^n}]_G^{p^{-2n}} \bmod [G^p,G]_G$ for any $n \geq 0$ and any $g,h \in G$.
\end{lemma}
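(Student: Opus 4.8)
The plan is to prove the congruence
\begin{equation*}
    [g,h]_G \equiv [g^{p^n},h^{p^n}]_G^{p^{-2n}} \bmod [G^p,G]_G
\end{equation*}
by reducing it, via the injectivity criterion of \cite{GS} Prop.\ 2.2 used in the previous lemma, to a congruence \emph{without} the fractional power $p^{-2n}$. Indeed, since the right-hand side is an element of $[G,G]_G$ (because $[g^{p^n},h^{p^n}]_G \in [G^{p^n},G^{p^n}]_G = [G,G]_G^{p^{2n}}$, so its $p^{-2n}$-th power lies in $[G,G]_G$), I would first raise both sides to the power $p^{2n}$. The target $[G^p,G]_G = [G,G]_G^p$ is powerfully embedded, so by the same argument as in the proof of part ii.\ of the preceding lemma, an element $z \in [G,G]_G$ satisfies $z \in [G^p,G]_G = [G,G]_G^p$ if and only if $z^{p^{2n}} \in [G,G]_G^{p^{2n+1}} = [G^p,G]_G^{p^{2n}}$. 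Thus it suffices to establish
\begin{equation*}
    [g,h]_G^{p^{2n}} \equiv [g^{p^n},h^{p^n}]_G \bmod [G,G]_G^{p^{2n+1}} \ .
\end{equation*}

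First I would prove the auxiliary commutator identity $[g^{p^m},h]_G \equiv [g,h]_G^{p^m} \bmod [G,G]_G^{p^{m+1}}$, and symmetrically in the second variable. This is the exact multiplicative analogue of bilinearity and should follow from the Hall--Petrescu machinery already set up: expanding $[g^{p^m},h]_G$ using the commutator collection formula and the containments $\gamma_j(G^{p^i}) \subseteq [G,G]_G^{p^{ij+j-2}}$ together with $[G^{p^i},G^{p^j}]_G = [G,G]_G^{p^{i+j}}$ quoted from \cite{GS} Prop.\ 2.1(3)-(5). The key point is that all the higher correction terms land in a sufficiently deep term of $[G,G]_G^{p^\bullet}$ and are therefore absorbed modulo $[G,G]_G^{p^{m+1}}$.

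With this bilinearity-type estimate in hand, I would iterate it in both variables: applying it once in the first slot gives $[g^{p^n},h^{p^n}]_G \equiv [g,h^{p^n}]_G^{p^n} \bmod [G,G]_G^{p^{2n+1}}$ (here one must track that raising a congruence modulo $[G,G]_G^{p^{n+1}}$ to the power $p^n$ deepens it to modulo $[G,G]_G^{p^{2n+1}}$, using again that $[G,G]_G$ is powerful so that $p$-th powers and the lower central filtration interact cleanly), and then applying it in the second slot to $[g,h^{p^n}]_G$ and raising to $p^n$ yields $[g,h^{p^n}]_G^{p^n} \equiv [g,h]_G^{p^{2n}} \bmod [G,G]_G^{p^{2n+1}}$. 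Chaining these congruences gives exactly the displayed identity above, and reversing the initial reduction completes the proof.

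The main obstacle I anticipate is bookkeeping the filtration depths: one must verify that at each step the error terms produced by Hall--Petrescu and by raising congruences to $p$-th powers remain in $[G,G]_G^{p^{2n+1}}$ rather than leaking into a shallower coset, which relies crucially on the powerfulness relation \eqref{f:p.e.} and on the index estimates $\gamma_j(G^{p^i}) \subseteq [G,G]_G^{p^{ij+j-2}}$. Once these quantitative containments are applied consistently the congruence follows, but the exponent arithmetic is where an error would most easily creep in.
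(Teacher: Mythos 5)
Your overall strategy is the same as the paper's: reduce the fractional-power statement to an integral congruence via root extraction in the uniform group $[G,G]_G$ (the paper does this same reduction, only at the end of its proof, when it writes $[g^{p^n},h^{p^n}]_G = y^{p^{2n}}$ and takes $p^{2n}$-th roots ``since $[G,G]_G$ is uniform''), prove Hall--Petrescu bilinearity estimates, and chain them. Your initial reduction and your second-slot step are both sound; the latter, raising $[g,h^{p^n}]_G \equiv [g,h]_G^{p^n} \bmod [G,G]_G^{p^{n+1}}$ to the power $p^n$ with deepened modulus, is exactly the paper's inclusion $([g,h]_G^{p^n}[G,G]_G^{p^{n+1}})^{p^n} \subseteq [g,h]_G^{p^{2n}}[G,G]_G^{p^{2n+1}}$.

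However, your first-slot step has a genuine gap. Your auxiliary identity $[g^{p^m},h]_G \equiv [g,h]_G^{p^m} \bmod [G,G]_G^{p^{m+1}}$ is stated for arbitrary $h \in G$, with error modulus depending only on $m$. Applying it to the pair $(g,h^{p^n})$ therefore yields only
\begin{equation*}
    [g^{p^n},h^{p^n}]_G \equiv [g,h^{p^n}]_G^{p^n} \bmod [G,G]_G^{p^{n+1}} \ ,
\end{equation*}
not the modulus $p^{2n+1}$ you claim --- and for $n \geq 1$ this weaker congruence is in fact \emph{vacuous}, since both sides already lie in $[G,G]_G^{p^{2n}} \subseteq [G,G]_G^{p^{n+1}}$. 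The deepening mechanism you invoke in the parenthesis cannot repair this: it upgrades a congruence $a \equiv b$ to $a^{p^n} \equiv b^{p^n}$ with deeper modulus, but the displayed congruence is not the $p^n$-th power of any previously established congruence (its left side $[g^{p^n},h^{p^n}]_G$ is the object being expanded, not a power of something already controlled), and the lost depth cannot be recovered afterwards either, since a congruence modulo $[G,G]_G^{p^{n+1}}$ between $p^{2n}$-th powers says nothing about their roots. With only modulus $p^{n+1}$ your final reduction fails: you need $z^{p^{2n}} \in [G,G]_G^{p^{2n+1}}$ but obtain only $z^{p^{2n}} \in [G,G]_G^{p^{n+1}}$. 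What is needed, and what the paper proves, is the \emph{two-variable} refinement in which the error depth is additive in the depths of both entries: apply Hall--Petrescu directly to the pair $(g,h^{p^n})$ and observe that every correction term satisfies $v_j(g,h^{p^n}) \in [G,G]_G^{p^{n+j-2}}$ (the extra $p^n$ coming from the second entry being a $p^n$-th power), whence $v_j(g,h^{p^n})^{\binom{p^n}{j-1}} \in [G,G]_G^{p^{2n+1}}$ using $\mathrm{v}_p((j-1)!) \leq j-3$ for $p>2$. Your closing paragraph does cite the right containments $\gamma_j(G^{p^i}) \subseteq [G,G]_G^{p^{ij+j-2}}$, so the repair stays inside your toolkit, but the bilinearity estimate must be stated and applied in this refined form; as written, your chaining stalls at the useless modulus $p^{n+1}$.
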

\begin{proof}
The Hall-Petrescu identity says (cf.\ \cite{B-LL} II \S5 Ex.\ 9d)) that
\begin{equation*}
    [g^{p^n},h]_G = [g,h]_G^{p^n} v_3(g,h)^{\binom{p^n}{2}} \ldots v_{p^n}(g,h)^{p^n} v_{p^n +1}(g,h)
\end{equation*}
where the $v_j(g,h)$ are universal products of iterated commutators in $g$ and $h$ of length $j$. In particular,
\begin{equation*}
    [g^{p^n},h^{p^n}]_G = [g,h^{p^n}]_G^{p^n} v_3(g,h^{p^n})^{\binom{p^n}{2}} \ldots v_{p^n}(g,h^{p^n})^{p^n} v_{p^n +1}(g,h^{p^n}) \ .
\end{equation*}
From $[G,G]_G \subseteq G^p$ and formula \eqref{f:p.e.} we conclude
inductively that
\begin{equation*}
    v_j(g,h) \in [G,G]_G^{p^{j-2}} \qquad\textrm{and}\qquad v_j(g,h^{p^n}) \in [G,G]_G^{p^{n+j-2}} \ .
\end{equation*}
It is well known that the $p$-adic valuation of $(j-1)!$ satisfies
\begin{equation*}
    \mathrm{v}_p((j-1)!) \leq \frac{j-2}{p-1} \ .
\end{equation*}
Since $p > 2$ we obtain $\mathrm{v}_p((j-1)!) \leq j-3$ for $j \geq 3$ and hence
\begin{equation*}
    v_j(g,h)^{\binom{p^n}{j-1}} \in [G,G]_G^{p^{n+1}} \qquad\textrm{and}\qquad v_j(g,h^{p^n})^{\binom{p^n}{j-1}} \in [G,G]_G^{p^{2n+1}} \ .
\end{equation*}
It follows that
\begin{equation*}
    [g^{p^n},h]_G \in [g,h]_G^{p^n} [G,G]_G^{p^{n+1}} \qquad\textrm{and}\qquad [g^{p^n},h^{p^n}]_G \in [g,h^{p^n}]_G^{p^n} [G,G]_G^{p^{2n+1}} \ .
\end{equation*}
By passing to inverses we also have $[g,h^{p^n}]_G \in [g,h]_G^{p^n} [G,G]_G^{p^{n+1}}$. Inserting this into the right hand formula gives
\begin{equation*}
    [g^{p^n},h^{p^n}]_G \in ([g,h]_G^{p^n} [G,G]_G^{p^{n+1}})^{p^n} [G,G]_G^{p^{2n+1}} \ .
\end{equation*}
But $([g,h]_G^{p^n} [G,G]_G^{p^{n+1}})^{p^n} \subseteq [g,h]_G^{p^{2n}} [G,G]_G^{p^{2n+1}}$, since $[G,G]_G$ is uniform, and hence
\begin{equation*}
    [g^{p^n},h^{p^n}]_G \equiv [g,h]_G^{p^{2n}} \bmod [G,G]_G^{p^{2n+1}} \ .
\end{equation*}
On the other hand, \eqref{f:p.e.} also implies that $[g^{p^n},h^{p^n}]_G \in [G,G]_G^{p^{2n}}$ which means, again since $[G,G]_G$ is uniform, that $[g^{p^n},h^{p^n}]_G = y^{p^{2n}}$ for some $y \in [G,G]_G$. We see that
\begin{equation*}
    y^{p^{2n}} \equiv [g,h]_G^{p^{2n}} \bmod [G,G]_G^{p^{2n+1}} \ ,
\end{equation*}
which implies, a third time since $[G,G]_G$ is uniform, that
\begin{equation*}
    y \equiv [g,h]_G \bmod [G,G]_G^p \ .
\end{equation*}
\end{proof}

In the previous section we had introduced the $\mathbb{F}_p$-vector subspace $\bigwedge (G/G^p) \subseteq \bigwedge^2 (G/G^p)$ which is generated by all elements of the form $gG^p \wedge hG^p$ where $[g,h]_G = 1$.

\begin{lemma}\label{abelian}
For any $g,h \in G$ we have $[g,h]_G = 1$ if and only if $[g,h]_L = 0$.
\end{lemma}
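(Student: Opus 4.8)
The plan is to prove the two implications separately: the forward one is immediate from the defining limit formula, while the converse requires the dictionary between uniform subgroups of $G$ and isolated Lie subalgebras of $\mathcal{L}$, together with the identity $\mathcal{L}([G,G]_G) = [\mathcal{L},\mathcal{L}]_L$ recalled at the start of this section.

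First I would dispose of $[g,h]_G = 1 \Rightarrow [g,h]_L = 0$. If $g$ and $h$ commute in $G$, then so do all of their powers, whence $[g^{p^n},h^{p^n}]_G = 1$ for every $n$. Feeding this into $[g,h]_L = \lim_{n\to\infty} [g^{p^n},h^{p^n}]_G^{p^{-2n}}$ gives $[g,h]_L = \lim_n 1 = 0$, recalling that the additive zero of $\mathcal{L}$ is the group identity, the two structures sharing the same underlying set.

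For the converse I would argue through an abelian subgroup rather than through the pair $(g,h)$ directly. It is worth noting that the naive route via Lemma \ref{comm-congr} is insufficient: that congruence only controls $[g,h]_G$ modulo $[G^p,G]_G = [G,G]_G^p$, so letting $[g^{p^n},h^{p^n}]_G^{p^{-2n}} \to [g,h]_L = 0$ yields merely $[g,h]_G \in [G,G]_G^p$, and this cannot be bootstrapped, since the comparison never sharpens beyond one level. Instead, assuming $[g,h]_L = 0$, I would form the $\mathbb{Z}_p$-submodule $\mathbb{Z}_p g + \mathbb{Z}_p h$ of $\mathcal{L}$ (a subalgebra, since the only nontrivial bracket of generators is $[g,h]_L = 0$) and pass to its isolator $\mathfrak{h}$, the set of $x \in \mathcal{L}$ with $p^k x$ in this submodule for some $k$. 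Torsion-freeness of $\mathcal{L}$ makes $\mathfrak{h}$ a direct summand and again a subalgebra, and it remains abelian: if $p^a x, p^b y \in \mathbb{Z}_p g + \mathbb{Z}_p h$ then $p^{a+b}[x,y]_L = [p^a x, p^b y]_L = 0$, forcing $[x,y]_L = 0$.

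The key step is then to invoke the correspondence of \cite{DDMS} assigning to the isolated subalgebra $\mathfrak{h}$ a uniform subgroup $H \subseteq G$ with $\mathcal{L}(H) = \mathfrak{h}$ and $H = \mathfrak{h}$ as subsets of $G = \mathcal{L}$; in particular $g,h \in \mathfrak{h} \subseteq H$. Applying to the uniform group $H$ the identity $\mathcal{L}([H,H]_G) = [\mathcal{L}(H),\mathcal{L}(H)]_L$ gives $\mathcal{L}([H,H]_G) = [\mathfrak{h},\mathfrak{h}]_L = 0$. Since the Lazard Lie algebra of a nontrivial uniform group is a nonzero lattice, and since $[H,H]_G$ is itself uniform (being powerfully embedded in $H$ by the same reasoning as for $G$), this forces $[H,H]_G = 1$; hence $H$ is abelian and in particular $[g,h]_G = 1$. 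The main obstacle, and the one point I would need to pin down carefully against \cite{DDMS}, is exactly this subgroup/subalgebra dictionary: that the abelian isolated subalgebra $\mathfrak{h}$ is realized by a uniform subgroup whose underlying set is $\mathfrak{h}$ and to which the cited identity $\mathcal{L}([\,\cdot\,,\,\cdot\,]_G) = [\mathcal{L}(\cdot),\mathcal{L}(\cdot)]_L$ legitimately applies.
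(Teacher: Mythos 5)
Your proof is correct and takes essentially the same route as the paper: the paper likewise forms the isolator $\mathcal{A} := \{ x \in \mathcal{L} : x^m \in \mathbb{Z}_p g + \mathbb{Z}_p h \ \textrm{for some $m \geq 1$}\}$ of $\mathbb{Z}_p g + \mathbb{Z}_p h$ and invokes the Scholium to Thm.\ 9.10 of \cite{DDMS} to realize this abelian isolated subalgebra as a uniform subgroup of $G$. The only (harmless) difference is the endgame: the paper deduces commutativity of that subgroup directly from \cite{DDMS} Cor.\ 7.16(i), whereas you rederive it from the identity $\mathcal{L}([H,H]_G) = [\mathcal{L}(H),\mathcal{L}(H)]_L$ applied to the uniform group $H$.
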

\begin{proof}
If $[g,h]_G = 1$ then $[g,h]_L = 0$ is immediate from the definition of the Lie bracket. Let us suppose that $[g,h]_L = 0$. Then $\mathcal{A} := \{ x \in \mathcal{L} : x^m \in \mathbb{Z}_p g + \mathbb{Z}_p h\ \textrm{for some $m \geq 1$} \}$ is an abelian Lie subalgebra of $\mathcal{L}$ in the sense of \cite{DDMS} Scholium to Thm.\ 9.10. Hence loc.\ cit.\ says that $\mathcal{A}$ at the same time is a uniform subgroup of $G$ which, moreover, by \cite{DDMS} Cor.\ 7.16(i) is commutative. We conclude that $[g,h]_G = 1$ holds true.
\end{proof}

We now introduce in $\bigwedge^2 \mathcal{L}$ the $\mathbb{Z}_p$-submodule $\bigwedge \mathcal{L}$ generated by all elements of the form $x \wedge y$ where $[x,y]_L = 0$. The above lemma implies that
\begin{equation}\label{f:transfer}
    \bigwedge (G/G^p) = (\bigwedge \mathcal{L} + p \bigwedge^2 \mathcal{L}) / p \bigwedge^2 \mathcal{L} \ .
\end{equation}
We therefore may reformulate Cor.\ \ref{S-criterion}  in the following form thereby obtaining our second main result.

\begin{theorem}\label{Lie-criterion1}
For uniform $G$ we have $SK_1(\Lambda(G)) = 0$ if and only if
\begin{equation*}
    \ker( \bigwedge^2 \mathcal{L} \xrightarrow{\; [\; ,\; ]_L \;} \mathcal{L}) \subseteq \bigwedge \mathcal{L} \ .
\end{equation*}
\end{theorem}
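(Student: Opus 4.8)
The plan is to transport the reduction from Cor.\ \ref{S-criterion} --- namely that $SK_1(\Lambda(G)) = 0$ is equivalent to $\bigwedge V = \ker \partial$ --- to the Lie algebra side, and to match both of these subspaces against the integral data attached to $\mathcal{L}$. Abbreviate $\Lambda^2 := \bigwedge^2 \mathcal{L}$, a finitely generated free $\mathbb{Z}_p$-module, and let $b := [\,,\,]_L \colon \Lambda^2 \to \mathcal{L}$ denote the bracket, with kernel $K := \ker b$; its image is $[\mathcal{L},\mathcal{L}]_L$, which we have already seen lies in $p\mathcal{L}$. By Remark \ref{add}i we identify $V = \mathcal{L}/p\mathcal{L}$, and since $\mathcal{L}$ is free over $\mathbb{Z}_p$ this gives $\bigwedge^2 V = \Lambda^2/p\Lambda^2$; by \eqref{f:two-maps} the map $\partial$ then becomes the reduction $\bar{b} \colon \Lambda^2/p\Lambda^2 \to p\mathcal{L}/p[\mathcal{L},\mathcal{L}]_L$ of $b$ modulo $p$.

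Next I would compute both subspaces of $\Lambda^2/p\Lambda^2$ explicitly. On one side, \eqref{f:transfer} gives $\bigwedge V = (\bigwedge \mathcal{L} + p\Lambda^2)/p\Lambda^2$. On the other side, an element $\xi \in \Lambda^2$ reduces into $\ker \bar{b}$ precisely when $b(\xi) \in p[\mathcal{L},\mathcal{L}]_L$; since $[\mathcal{L},\mathcal{L}]_L = \im(b)$, one has $p[\mathcal{L},\mathcal{L}]_L = b(p\Lambda^2)$, so this condition reads $\xi \in K + p\Lambda^2$, whence $\ker \partial = (K + p\Lambda^2)/p\Lambda^2$. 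Because every generator $x \wedge y$ of $\bigwedge \mathcal{L}$ satisfies $[x,y]_L = 0$, we have $\bigwedge \mathcal{L} \subseteq K$. Combining these facts, $SK_1(\Lambda(G)) = 0$ is equivalent to $K + p\Lambda^2 = \bigwedge \mathcal{L} + p\Lambda^2$, that is, to the single inclusion $K \subseteq \bigwedge \mathcal{L} + p\Lambda^2$.

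It then remains to promote this mod-$p$ inclusion to the integral statement $K \subseteq \bigwedge \mathcal{L}$, and this lifting is the only real point of the argument, since the reverse implication is immediate. The key observation is that $K$ is saturated: as $\Lambda^2/K \hookrightarrow \mathcal{L}$ is torsion free, $p\eta \in K$ forces $\eta \in K$. Granting $K \subseteq \bigwedge \mathcal{L} + p\Lambda^2$, I would write any $\xi \in K$ as $\xi = \omega + p\eta$ with $\omega \in \bigwedge \mathcal{L} \subseteq K$ and $\eta \in \Lambda^2$; then $p\eta = \xi - \omega \in K$, so $\eta \in K$, which upgrades the inclusion to $K \subseteq \bigwedge \mathcal{L} + pK$. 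Iterating gives $K \subseteq \bigwedge \mathcal{L} + p^n K \subseteq \bigwedge \mathcal{L} + p^n \Lambda^2$ for every $n$. Finally $\bigwedge \mathcal{L}$, being a $\mathbb{Z}_p$-submodule of the finitely generated free module $\Lambda^2$, is $p$-adically closed, so $\bigcap_n (\bigwedge \mathcal{L} + p^n \Lambda^2) = \bigwedge \mathcal{L}$, and therefore $K \subseteq \bigwedge \mathcal{L}$, as desired.
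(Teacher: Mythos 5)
Your proposal is correct and follows essentially the same route as the paper: the identification $\ker\partial = \big(\ker[\;,\;]_L + p\bigwedge^2\mathcal{L}\big)/p\bigwedge^2\mathcal{L}$ via surjectivity of the bracket onto $[\mathcal{L},\mathcal{L}]_L$, the reduction to the single inclusion $\ker[\;,\;]_L \subseteq \bigwedge\mathcal{L} + p\bigwedge^2\mathcal{L}$, and the $p$-adic lifting to the integral statement. The only difference is that you make explicit the paper's ``easy inductive argument,'' namely the saturation of the kernel (torsion-freeness of $\bigwedge^2\mathcal{L}/\ker$) and the $p$-adic closedness of $\bigwedge\mathcal{L}$, both of which the paper leaves implicit.
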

\begin{proof}
The identity \eqref{f:transfer} implies that $SK_1(\Lambda(G)) = 0$ if and only if
\begin{equation*}
    \bigwedge \mathcal{L} + p \bigwedge^2 \mathcal{L} = \ker( \bigwedge^2 \mathcal{L} \xrightarrow{\; [\; ,\; ]_L \;} p\mathcal{L}/ p[\mathcal{L},\mathcal{L}]_L) \ .
\end{equation*}
But the map $[\; ,\; ]_L : \bigwedge^2 \mathcal{L} \longrightarrow [\mathcal{L},\mathcal{L}]_L$ is surjective. Hence
\begin{equation*}
    \ker( \bigwedge^2 \mathcal{L} \xrightarrow{\; [\; ,\; ]_L \;} p\mathcal{L}/ p[\mathcal{L},\mathcal{L}]_L) = \ker( \bigwedge^2 \mathcal{L} \xrightarrow{\; [\; ,\; ]_L \;} \mathcal{L}) + p \bigwedge^2 \mathcal{L} \ .
\end{equation*}
It follows that $SK_1(\Lambda(G)) = 0$ if and only if
\begin{equation*}
    \bigwedge \mathcal{L} + p \bigwedge^2 \mathcal{L} = \ker( \bigwedge^2 \mathcal{L} \xrightarrow{\; [\; ,\; ]_L \;} \mathcal{L}) + p \bigwedge^2 \mathcal{L} \ .
\end{equation*}
For trivial reasons we have $\bigwedge \mathcal{L} \subseteq \ker( \bigwedge^2 \mathcal{L} \xrightarrow{\; [\; ,\; ]_L \;} \mathcal{L})$ so that the equivalent condition reduces to
\begin{equation*}
    \ker( \bigwedge^2 \mathcal{L} \xrightarrow{\; [\; ,\; ]_L \;} \mathcal{L}) \subseteq \bigwedge \mathcal{L} + p \bigwedge^2 \mathcal{L}  \ .
\end{equation*}
But this latter inclusion implies, by an easy inductive argument, that
\begin{equation*}
    \ker( \bigwedge^2 \mathcal{L} \xrightarrow{\; [\; ,\; ]_L \;} \mathcal{L}) \subseteq \bigwedge \mathcal{L} + p^j \bigwedge^2 \mathcal{L} \qquad\textrm{for any $j \geq 1$}
\end{equation*}
and hence that
\begin{equation*}
    \ker( \bigwedge^2 \mathcal{L} \xrightarrow{\; [\; ,\; ]_L \;} \mathcal{L}) \subseteq \bigwedge \mathcal{L} + \bigcap_j p^j \bigwedge^2 \mathcal{L} = \bigwedge \mathcal{L} \ .
\end{equation*}
The reverse direction is trivial.
\end{proof}

We observe that the criterion in Thm.\  \ref{Lie-criterion1} behaves well under products.

\begin{remark}
Let $\mathcal{L}_i$  be $\mathbb{Z}_p$-Lie algebras which satisfy the condition $\bigwedge \mathcal{L}_i=\ker [\;,\;]_i,$ and let $\mathcal{L}_0:=\mathcal{L}_1\oplus\mathcal{L}_2$ be the product
Lie algebra with bracket $[x_1+x_2,y_1+y_2]_0:=[x_1,y_1]_1 + [x_2,y_2]_2$ for $x_i,y_i\in\mathcal{L}_i$.
Then also for $\mathcal{L}_0$ we have
\begin{equation*}
    \bigwedge \mathcal{L}_0 =\ker
[\;,\;]_0 \ .
\end{equation*}
\end{remark}
\begin{proof}
The following identities are easily checked and imply the assertion:
\begin{gather*}
    \ker [\;,\;]_0 = \ker
[\;,\;]_1 + \mathcal{L}_1 \otimes_{\mathbb{Z}_p} \mathcal{L}_2  +\ker [\;,\;]_2 \ , \\
\bigwedge \mathcal{L}_0=\bigwedge\mathcal{L}_1 + \mathcal{L}_1 \otimes_{\mathbb{Z}_p} \mathcal{L}_2  +\bigwedge\mathcal{L}_2 \ .
\end{gather*}
\end{proof}

\section{A counterexample}\label{s:counterex}

In the following we will modify the Example 8.11 in \cite{Oli} in order to show that the criterion in Thm.\ \ref{Lie-criterion1} is not always satisfied.

Let $R$ be any commutative ring in which 2 is invertible. First we introduce the free $R$-module $V := R^4$ with its standard basis $e_1,\ldots, e_4$ and $W := \bigwedge^2 V/ R(e_1 \wedge e_2 + e_3 \wedge e_4)$ (which has rank 5) together with the linear map
\begin{equation*}
    \partial : \bigwedge^2 V \xrightarrow{\; \pr \;} W \ .
\end{equation*}
Obviously $\ker \partial$ does not contain any nonzero vector of the form $a \wedge b$.

Now define $\mathcal{L}' := V \oplus W$. The bracket
\begin{equation*}
    [\;,\;] : \bigwedge^2 \mathcal{L}' \xrightarrow{\; \pr \;} \bigwedge^2 V \xrightarrow{\; \partial \;} W \xrightarrow{\; \subseteq \;} \mathcal{L}'
\end{equation*}
makes $\mathcal{L}'$ into a 2-step nilpotent Lie algebra over $R$ with center $Z(\mathcal{L}') = [\mathcal{L}', \mathcal{L}'] =  W$ and $\bigwedge \mathcal{L}' \neq \ker \partial$. In fact, $\ker \partial / \bigwedge \mathcal{L}'$ is free of rank 1 over $R$.

Now let $R = \mathbb{Z}_p$ with $p \neq 2$.  Then $\mathcal{L}:=p\mathcal{L}'$ is a powerful and
torsion free, i.e.\ uniform $\mathbb{Z}_p$-Lie algebra which corresponds to a unique uniform
pro-$p$-group $G$ (of dimension $9$) by \cite{DDMS} Thm.\ 9.8. We have $pe_1\wedge pe_2 +pe_3\wedge pe_4 \in \ker \partial \setminus \bigwedge \mathcal{L}$. Thm.\ \ref{Lie-criterion1} therefore implies that $SK_1(\Lambda(G)) \neq 0$. 

In terms of generators and relations a similar example can be described as follows. Let $G$
be the pro-$p$-group generated by $x_1,\ldots , x_4,y_1,\ldots y_5$ subject to the following
relations
\begin{eqnarray*}
 [x_i,y_j]&=&  1 \mbox{ for all } 1\leq i\leq 4,\; 1\leq j \leq 5,\\
 {[y_i,y_j]}&=&
  1 \mbox{ for all } 1\leq i,j \leq 5
\end{eqnarray*} and
\begin{equation*}
    [x_i,x_j]=  \left\{
  \begin{array}{ll}
    y_1^{-p}, & \hbox{if $(i,j)=(1,2);$} \\
    y_1^p, & \hbox{if $(i,j)=(1,3);$} \\
    y_2^p, & \hbox{if $(i,j)=(1,4);$} \\
    y_3^p, & \hbox{if $(i,j)=(2,3);$} \\
    y_4^p, & \hbox{if $(i,j)=(2,4);$} \\
    y_5^p, & \hbox{if $(i,j)=(3,4).$}
  \end{array}
\right.
\end{equation*}
Then $G$ is obviously powerful, fits into an exact sequence
\begin{equation*}
    1 \longrightarrow \mathbb{Z}_p^5  \longrightarrow  G \longrightarrow \mathbb{Z}_p^4  \longrightarrow 1,
\end{equation*}
whence is torsionfree and thus uniform.
One can also show directly, using Oliver's criterion or Cor.\ \ref{S-criterion}, that this group has
non-trivial $SK_1.$ Indeed, the same kind of calculation as above shows that $x_1G^p\wedge x_2G^p +
x_3G^p\wedge x_4G^p$ belongs to $\ker(\partial:\bigwedge^2 G/G^p \to G^p/[G^p,G]),$ but not to
$\bigwedge V =\bigwedge G/G^p.$ 

\section{Chevalley orders}

Let $F$ be any field of characteristic zero and let $\mathfrak{g}$ be an $F$-split reductive Lie algebra over $F$ (\cite{B-LL} Chap.\ VIII \S2.1) with center $\mathfrak{z}$. We fix an $F$-split Cartan subalgebra $\mathfrak{h} \subseteq \mathfrak{g}$ and let $\Phi$ denote the corresponding (reduced) root system viewed as a subset of the dual vector space $\mathfrak{h}^\ast$. By \cite{B-LL} Chap.\ VIII \S2.2 Thm.\ 1 we have for any root $\alpha$:
\begin{itemize}
  \item[--] The root space $\mathfrak{g}^\alpha = \{x \in \mathfrak{g} : [h,x] = \alpha(h)x\ \textrm{for any $h \in \mathfrak{h}$}\}$ is one dimensional.
  \item[--] $\mathfrak{h}_\alpha := [\mathfrak{g}^\alpha, \mathfrak{g}^{-\alpha}]$ is one dimensional; it contains a unique element $H_\alpha$ such that $\alpha(H_\alpha) = 2$.
\end{itemize}
Obviously $H_{-\alpha} = - H_\alpha$. According to \cite{B-LL} Chap.\ VIII \S4.4 there always exists a Chevalley system for $(\mathfrak{g}, \mathfrak{h})$: This is a family of nonzero vectors $X_\alpha \in \mathfrak{g}^\alpha$ such that
\begin{equation*}
    [X_\alpha, X_{-\alpha}] = - H_\alpha
\end{equation*}
and which satisfies one additional condition whose explicit form is not needed in the following. We introduce the coroot lattice
\begin{equation*}
    Q^\vee := \sum_{\alpha \in \Phi} \mathbb{Z} H_\alpha \subseteq \mathfrak{h}
\end{equation*}
as well as the coweight lattice
\begin{equation*}
    P^\vee := \{h \in \sum_{\alpha \in \Phi} \mathbb{Q} H_\alpha : \beta(h) \in \mathbb{Z}\ \textrm{for any $\beta \in \Phi$} \} \subseteq \mathfrak{h}
\end{equation*}
of the root system $\Phi$. Throughout this section we fix a $\mathbb{Z}$-lattice $\mathfrak{h}_{\mathbb{Z}} \subseteq \mathfrak{h}$ such that
\begin{equation*}
    Q^\vee \subseteq \mathfrak{h}_{\mathbb{Z}} \subseteq P^\vee \oplus \mathfrak{z} \ ,
\end{equation*}
and we put
\begin{equation*}
    \mathfrak{g}_{\mathbb{Z}} := \mathfrak{h}_{\mathbb{Z}} + \sum_{\alpha \in \Phi} \mathbb{Z} X_\alpha \subseteq \mathfrak{g} \ .
\end{equation*}
By \cite{B-LL} Chap.\ VIII \S2.4 Prop.\ 8 and \S12.7 the $\mathbb{Z}$-submodule $\mathfrak{g}_{\mathbb{Z}}$ of $\mathfrak{g}$ in fact is a $\mathbb{Z}$-Lie subalgebra (a so called Chevalley order of $\mathfrak{g}$). For any (commutative) ring $R$ we then have the $R$-Lie algebra $\mathfrak{g}_R := R \otimes_{\mathbb{Z}} \mathfrak{g}_{\mathbb{Z}}$. We also put $\mathfrak{h}_R := R \otimes_{\mathbb{Z}} \mathfrak{h}_{\mathbb{Z}}$.

In the following we always assume that the integers $2$ and $3$ are invertible in the ring $R$. We view the Lie bracket as a linear map $[\;,\,] : \bigwedge^2 \mathfrak{g}_R \longrightarrow \mathfrak{g}_R$ on the exterior square and define
\begin{equation*}
    \bigwedge \mathfrak{g}_R := \  < x \wedge y \in \bigwedge^2 \mathfrak{g}_R : [x,y] = 0 >_R \ .
\end{equation*}
We aim at showing that
\begin{equation*}
    \bigwedge \mathfrak{g}_R = \ker [\;,\;]
\end{equation*}
holds true.

\begin{rem}
For $R = \mathbb{C}$ and $\mathfrak{g}$ semisimple this identity was shown by Kostant in \cite{Ko} Cor.\ 5.1. But his proof relies on representation theory and therefore cannot be made to work integrally.
\end{rem}

Here is a list of relevant basic facts:
\begin{itemize}
  \item[1.] $\bigwedge^2 \mathfrak{h}_R \subseteq \bigwedge \mathfrak{g}_R$ (obvious).
  \item[2.] $[X_\alpha, X_\beta] = 0$ and hence $X_\alpha \wedge X_\beta \in \bigwedge \mathfrak{g}_R$ if $\alpha + \beta \not\in \Phi \cup \{0\}$.
  \item[3.] Each root induces a surjective linear form $\alpha : \mathfrak{h}_R \longrightarrow R$ (since $\alpha(H_\alpha) = 2$ is invertible in
  $R$).
\end{itemize}
The latter implies that $\mathfrak{h}_R = R H_\alpha \oplus \ker(\alpha)$. But for $h \in \ker(\alpha)$ we have $[h,X_\alpha] = 0$ and hence $h \wedge X_\alpha \in \bigwedge \mathfrak{g}_R$. It follows that
\begin{itemize}
  \item[4.] $h \wedge X_\alpha \equiv \frac{\alpha(h)}{2} H_\alpha \wedge X_\alpha \bmod \bigwedge \mathfrak{g}_R$
\end{itemize}
for any $h \in \mathfrak{h}_R$. We fix a basis $\Delta \subseteq \Phi$ of the root system; $\Phi^+$ denotes the corresponding subset of positive roots. The set $\{H_\alpha : \alpha \in \Delta\}$ is a basis of $\mathbb{Q} \otimes_{\mathbb{Z}} Q^\vee$. We also  need the Killing form
\begin{equation*}
    <\;, \; > \, : \mathfrak{h} \times \mathfrak{h} \longrightarrow F \ .
\end{equation*}
It has the property (cf.\ \cite{B-LL} Chap.\ VIII \S2.2. Prop.\ 2(ii)) that
\begin{equation*}
    <Q^\vee,Q^\vee> \, \subseteq \mathbb{Z} \ .
\end{equation*}
We then have
\begin{itemize}
  \item[5.] $\alpha(h) = \frac{2 <H_\alpha,h>}{<H_\alpha, H_\alpha>}$ for any $h \in \mathfrak{h}$.
\end{itemize}
Since the Killing form on $\mathfrak{h}/\mathfrak{z}$ is known to be nondegenerate (\cite{B-LL} Chap.\ VIII \S2.2 Remark 2) we obtain:
\begin{itemize}
  \item[6.] If $\beta = \sum_{\alpha \in \Delta} m_{\beta\alpha} \alpha$ (with $m_{\beta\alpha} \in \mathbb{Z}$) then
      \begin{equation*}
        H_\beta = \sum_{\alpha \in \Delta} m_{\beta\alpha} \frac{<H_\beta, H_\beta>}{<H_\alpha, H_\alpha>} H_\alpha \ .
      \end{equation*}
\end{itemize}
All the ratios $\frac{<H_\beta, H_\beta>}{<H_\alpha, H_\alpha>}$ belong to the set $\{1,2, \frac{1}{2}, 3, \frac{1}{3}\}$ (\cite{B-LL} Chap.\ VI \S1.4 Prop.\ 12(i) and Chap.\ VIII \S2.4 formula (6)) and therefore are units in $R$. In particular, the $R$-module $R \otimes_{\mathbb{Z}} Q^\vee$ is free with basis $\{H_\alpha : \alpha \in \Delta\}$.

Let $\alpha, \beta$ be any two roots such that $\alpha + \beta \in \Phi$ again is a root. We have
\begin{equation*}
    [X_\alpha, X_\beta] = N_{\alpha\beta} X_{\alpha + \beta} \qquad\textrm{for some integer $N_{\alpha\beta}$}.
\end{equation*}
It follows from \cite{B-LL} Chap.\ VIII \S2.4 Prop.\ 7 and Chap.\ VI \S1.3 Cor.\ of Prop.\ 9 that
\begin{itemize}
  \item[7.] $N_{\alpha\beta} \in \pm \{1,2,3\}$ and, in particular, is a unit in $R$.
\end{itemize}
The weight lattice of $\Phi$ is $P := \Hom(Q^\vee, \mathbb{Z})$. It contains the root lattice $Q := \sum_{\alpha \in \Phi} \mathbb{Z} \alpha$ with finite index.

\begin{lemma}\label{directfactor}
Let $\alpha, \beta \in \Phi$ be any two roots; we have:
\begin{itemize}
  \item[i.] The submodule $R\alpha + R\beta$ of the $R$-module  $R \otimes_{\mathbb{Z}} P = \Hom(Q^\vee,R)$ is a direct factor;
  \item[ii.] If $\alpha \neq \pm \beta$ then we find an element $h \in R \otimes_{\mathbb{Z}} Q^\vee \subseteq \mathfrak{h}_R$ such that $\alpha(h) = 1$ and $\beta(h) = 0$.
\end{itemize}
\end{lemma}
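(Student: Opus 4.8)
The plan is to deduce both parts from the single statement that, for $\alpha \neq \pm \beta$, the evaluation map
\[
    \mathrm{ev} : R \otimes_{\mathbb{Z}} Q^\vee \longrightarrow R^2, \qquad h \longmapsto (\alpha(h), \beta(h)),
\]
is surjective. Since $Q^\vee$ is free of finite rank, $R \otimes_{\mathbb{Z}} P = \Hom(Q^\vee, R)$ is the $R$-linear dual of $R \otimes_{\mathbb{Z}} Q^\vee$ under a perfect pairing, and the transpose of $\mathrm{ev}$ is precisely the map $R^2 \to R \otimes_{\mathbb{Z}} P$ sending the standard basis to $\alpha, \beta$. Thus if $\mathrm{ev}$ is surjective it splits (as $R^2$ is projective), its transpose is a split injection, and the image $R\alpha + R\beta$ of the transpose is a direct factor of $R \otimes_{\mathbb{Z}} P$; this gives (i). Solving $\mathrm{ev}(h) = (1,0)$ directly gives (ii).

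The key computation is to restrict $\mathrm{ev}$ to the submodule $R H_\alpha + R H_\beta \subseteq R \otimes_{\mathbb{Z}} Q^\vee$ (note $H_\alpha, H_\beta \in Q^\vee$). In the generating set $H_\alpha, H_\beta$ the map is represented by the matrix
\[
    \begin{pmatrix} \alpha(H_\alpha) & \alpha(H_\beta) \\ \beta(H_\alpha) & \beta(H_\beta) \end{pmatrix} = \begin{pmatrix} 2 & \alpha(H_\beta) \\ \beta(H_\alpha) & 2 \end{pmatrix},
\]
whose determinant is $4 - \alpha(H_\beta)\beta(H_\alpha)$. Here $\alpha(H_\beta)$ and $\beta(H_\alpha)$ are the two Cartan integers attached to the pair $\alpha, \beta$, and it is a standard fact (cf.\ \cite{B-LL} Chap.\ VI \S1.3) that their product $\alpha(H_\beta)\beta(H_\alpha) = 4\cos^2\theta$ lies in $\{0,1,2,3\}$ as soon as $\alpha$ and $\beta$ are not proportional --- and in a reduced root system non-proportional means exactly $\alpha \neq \pm\beta$. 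Consequently the determinant $4 - \alpha(H_\beta)\beta(H_\alpha)$ lies in $\{1,2,3,4\}$ and hence is a unit in $R$, precisely because $2$ and $3$ are invertible. This is the one place where the hypothesis on $R$ enters, and it is the crux of the argument; everything else is formal linear algebra.

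With the determinant a unit, Cramer's rule produces $a, b \in R$ with $\mathrm{ev}(aH_\alpha + bH_\beta) = (1,0)$, proving (ii), and surjectivity of $\mathrm{ev}$ onto all of $R^2$ follows in the same way by also solving for $(0,1)$, which by the duality argument above yields (i). It remains to treat the case $\alpha = \pm\beta$ in (i), where $R\alpha + R\beta = R\alpha$: here $h := \tfrac{1}{2}H_\alpha \in R \otimes_{\mathbb{Z}} Q^\vee$ satisfies $\alpha(h) = 1$, so the evaluation map $R \otimes_{\mathbb{Z}} Q^\vee \to R$, $h' \mapsto \alpha(h')$, is surjective and $R\alpha$ is a direct factor by the same transpose argument. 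I expect the only subtlety to be bookkeeping the duality $R \otimes_{\mathbb{Z}} P = (R \otimes_{\mathbb{Z}} Q^\vee)^\vee$ correctly, so that ``direct factor'' on the weight side corresponds to ``surjective'' on the coweight side.
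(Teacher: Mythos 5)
Your proof is correct, but it follows a genuinely different route from the paper's. The paper proves part (i) first by a reduction-and-case-analysis argument: it decomposes $\Phi$ into irreducible components, treats type $A_\ell$ separately via an explicit coordinate realization of $Q^\vee$, and for the remaining irreducible types uses $|P/Q| \le 4$ to identify $R\otimes_{\mathbb{Z}}P$ with $R\otimes_{\mathbb{Z}}Q$ and then computes the elementary divisors of $\mathbb{Z}\alpha+\mathbb{Z}\beta$ in $Q$ by inspecting Bourbaki's tables (finding gcd's equal to $1,2$ or $3$); part (ii) is then deduced from (i) by dualizing. You argue in the opposite logical direction and entirely without case distinctions: restricting the evaluation map $h \mapsto (\alpha(h),\beta(h))$ to $RH_\alpha + RH_\beta$ gives the $2\times 2$ matrix of Cartan integers
\begin{equation*}
\begin{pmatrix} 2 & \alpha(H_\beta) \\ \beta(H_\alpha) & 2 \end{pmatrix},
\end{equation*}
whose determinant $4-\alpha(H_\beta)\beta(H_\alpha)$ lies in $\{1,2,3,4\}$ for $\alpha \neq \pm\beta$ (since the product of Cartan integers is $4\cos^2\theta \in \{0,1,2,3\}$ for non-proportional roots in a reduced system), hence is a unit when $2$ and $3$ are invertible; surjectivity of the evaluation map gives (ii) at once, and (i) follows by the transpose/splitting argument, with the degenerate case $\alpha=\pm\beta$ handled by $\tfrac12 H_\alpha$. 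Your version is shorter, uniform across all types (no separate $A_\ell$ treatment, no reduction to irreducible components, no tables), and isolates exactly where the hypothesis on $R$ enters; what the paper's computation buys in exchange is finer integral information — the actual elementary divisors of $\mathbb{Z}\alpha+\mathbb{Z}\beta$ over $\mathbb{Z}$ (e.g.\ the cokernel $\mathbb{Z}/3\mathbb{Z}$ in type $A_2$, or $2\mathbb{Z}\oplus 2\mathbb{Z}$ for $\ell \ge 3$) — which your determinant argument only bounds rather than computes.
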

\begin{proof}
i. \textit{Step 1:} We assume that $\Phi$ is irreducible but not of type $A_\ell$. In this case the
order of the finite group $P/Q$ is equal to $1,2,3$, or $4$ (cf. \cite{B-LL} Plate II-X). Hence $R \otimes_{\mathbb{Z}} P = R \otimes_{\mathbb{Z}} Q$, and it suffices to show that the
nonzero elementary divisors of $\mathbb{Z}\alpha + \mathbb{Z}\beta$ in $Q$ are invertible in $R$.
Since any root is a member of some basis of the root system we may assume that $\alpha \in \Delta$.
If $\alpha = \pm \beta$ then $\mathbb{Z}\alpha + \mathbb{Z}\beta = \mathbb{Z}\alpha$ obviously is a
direct factor of $Q$. We therefore suppose that $\alpha \neq \pm \beta$. Let $\beta = \sum_{\delta
\in \Delta} m_\delta \delta$. The nonzero elementary divisors in question are $1$ and the gcd of
the integers $m_\delta$ for $\delta \neq \alpha$. A case by case inspection of the tables in
\cite{B-LL} shows that this gcd is equal to $1,2$, or $3$ (the latter only happening for the type
$G_2$).

\textit{Step 2:} We assume that $\Phi$ is irreducible of type $A_\ell$. We also may assume that $\alpha$ and $\beta$ both are positive roots. The coroot lattice can be realized as
\begin{equation*}
     Q^\vee = \{(m_1, \ldots,m_{l+1}) \in \mathbb{Z}^{l+1} : m_1 + \ldots + m_{l+1} = 0\}.
\end{equation*}
The positive roots are the maps
\begin{align*}
    \alpha_{ij} : \qquad\qquad\qquad Q^\vee & \longrightarrow \mathbb{Z} \\ (m_1, \ldots,m_{l+1}) & \longmapsto m_i - m_j
\end{align*}
for any $1 \leq i < j \leq l+1$. We leave it to the reader to explicitly check that the following facts about positive roots hold true, which by duality imply the assertion.
\begin{itemize}
  \item[--] $\alpha_{12}(Q^\vee) = 2\mathbb{Z}$ for $l=1$.
  \item[--] $\alpha(Q^\vee) = \mathbb{Z}$ for $l \geq 2$ and any $\alpha$.
  \item[--] $(\mathbb{Z} \oplus \mathbb{Z}) / (\alpha,\beta)(Q^\vee) \cong \mathbb{Z}/3\mathbb{Z}$ for $l = 2$ and any $\alpha \neq \beta$.
  \item[--] $(\alpha,\beta)(Q^\vee) \supseteq 2\mathbb{Z} \oplus 2\mathbb{Z}$ for $l \geq 3$ and any $\alpha \neq \beta$.
\end{itemize}

\textit{Step 3:} If we decompose a general $\Phi$ into its irreducible components then $P$ is the direct sum of the weight lattices of these irreducible components.

ii. By i. we have $\Hom_R(R \otimes_{\mathbb{Z}} Q^\vee,R) = R\alpha \oplus R\beta \oplus U$ and the claim follows by dualizing and identifying $R \otimes_{\mathbb{Z}} Q^\vee$ with its double $R$-dual.
\end{proof}

\begin{lemma}\label{congruence1}
For any two roots $\alpha, \beta$ such that $\gamma := \alpha + \beta \in \Phi$ we have
\begin{equation*}
    X_\alpha \wedge X_\beta \equiv \frac{N_{\alpha\beta}}{2}H_\gamma \wedge X_\gamma \bmod \bigwedge \mathfrak{g}_R \ .
\end{equation*}
\end{lemma}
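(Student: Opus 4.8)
The plan is to compute $[X_\alpha, X_\beta]$ and relate it to $H_\gamma \wedge X_\gamma$ modulo the subspace $\bigwedge \mathfrak{g}_R$ of ``commuting'' wedges, exploiting the reduction tool (fact 4) that lets me replace $h \wedge X_\gamma$ by $\tfrac{\gamma(h)}{2} H_\gamma \wedge X_\gamma$ modulo $\bigwedge \mathfrak{g}_R$. The key algebraic identity I want to produce is a Jacobi-type relation inside $\bigwedge^2 \mathfrak{g}_R$ that exhibits $X_\alpha \wedge X_\beta$ in terms of a wedge involving $X_\gamma$ and an element of $\mathfrak{h}_R$, up to commuting wedges.

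First I would note that since $\gamma = \alpha + \beta$ is a root, we have $[X_\alpha, X_\beta] = N_{\alpha\beta} X_\gamma$, with $N_{\alpha\beta}$ a unit in $R$ by fact 7. The natural move is to write $X_\gamma = N_{\alpha\beta}^{-1}[X_\alpha, X_\beta]$ and then bracket against $X_{-\alpha}$ (or $X_{-\beta}$) to bring a Cartan element $H_\alpha$ into play. Concretely, using $[X_\alpha, X_{-\alpha}] = -H_\alpha$, I expect a computation of the form $[[X_\alpha, X_\beta], X_{-\alpha}]$ to reduce, via the Jacobi identity, to a combination of $\alpha(\cdot)$-type scalars times $X_\beta$ plus a correction. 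The idea is that the element $X_\alpha \wedge X_\beta$ and the element $H_\gamma \wedge X_\gamma$ are tied together because $H_\gamma$ itself acts on the root spaces by the known eigenvalues $\gamma(H_\gamma) = 2$, and more generally $H_\gamma = H_\alpha + $ (scalar) depending on the decomposition in fact 6.

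The cleanest route is probably to start from an element that visibly lies in $\bigwedge \mathfrak{g}_R$ and expand it. For instance, consider $X_\alpha \wedge [X_\beta, X_{-\alpha}]$ type expressions, or better, exploit that $[X_\gamma, X_{-\alpha}]$ is (up to a unit $N_{\gamma, -\alpha}$) either $X_\beta$ or zero depending on whether $\gamma - \alpha = \beta \in \Phi$, which it is here. The plan is then to use the derivation property of the bracket together with fact 4 to transfer the $\mathfrak{h}_R$-component onto the standard $H_\gamma \wedge X_\gamma$ form, collecting the scalar $\tfrac{N_{\alpha\beta}}{2}$ from the structure constant $N_{\alpha\beta}$ and the normalization $\gamma(H_\gamma) = 2$ built into fact 4.

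The hard part will be bookkeeping the structure constants correctly and verifying that every auxiliary scalar introduced (coming from the $N$'s and from the ratios $\tfrac{\langle H_\beta, H_\beta\rangle}{\langle H_\alpha, H_\alpha\rangle} \in \{1, 2, \tfrac12, 3, \tfrac13\}$) is a unit in $R$, so that all the manipulations are legitimate over $R$ with only $2$ and $3$ inverted; this is where the hypotheses on $R$ are genuinely used. I expect the Jacobi identity, applied to the triple $(X_\alpha, X_\beta, X_{-\gamma})$ or to $(X_\alpha, X_{-\alpha}, X_\beta)$, to be the engine that converts $X_\alpha \wedge X_\beta$ into an $\mathfrak{h}_R \wedge X_\gamma$ expression modulo wedges of non-adding roots (which lie in $\bigwedge \mathfrak{g}_R$ by fact 2), after which fact 4 finishes the reduction to the asserted normal form.
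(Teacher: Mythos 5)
Your plan correctly identifies the target (rewrite $X_\alpha\wedge X_\beta$ as an $\mathfrak{h}_R\wedge X_\gamma$-type wedge modulo commuting wedges, then finish with fact 4), but the engine you propose --- the Jacobi identity together with the ``derivation property'' of the bracket --- cannot deliver it. The Jacobi identity is an identity in $\mathfrak{g}_R$; to convert it into a congruence in $\bigwedge^2\mathfrak{g}_R$ modulo $\bigwedge\mathfrak{g}_R$ you would need to know that $\bigwedge\mathfrak{g}_R$ is stable under the adjoint action $x\cdot(a\wedge b)=[x,a]\wedge b+a\wedge[x,b]$, i.e.\ that applying $\mathrm{ad}$ to a commuting wedge lands back in the span of commuting wedges. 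Over $\mathbb{C}$ this is true and is essentially the starting point of Kostant's representation-theoretic argument (one differentiates the group action on commuting pairs), but over a general $R$ with only $2$ and $3$ invertible no such stability is available, and the paper is structured precisely to avoid this input. So bracketing $[[X_\alpha,X_\beta],X_{-\alpha}]$ or invoking Jacobi on $(X_\alpha,X_\beta,X_{-\gamma})$ only yields relations among the structure constants $N_{\alpha\beta}$ (such relations do appear in the paper, but merely as bookkeeping identities in the proof of the next lemma), not congruences modulo $\bigwedge\mathfrak{g}_R$.

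What actually carries the paper's proof, and what is missing from your sketch, is the following. (i) The difference $X_\alpha\wedge X_\beta-\frac{N_{\alpha\beta}}{2}H_\gamma\wedge X_\gamma$ lies in $\ker[\;,\;]$, so it suffices to show it is congruent modulo $\bigwedge\mathfrak{g}_R$ to a \emph{single decomposable} wedge $A\wedge B$: then $[A,B]=0$ is forced, so $A\wedge B\in\bigwedge\mathfrak{g}_R$ automatically. (ii) Such a wedge is constructed explicitly: $A=X_\alpha+h$ and $B=X_\beta+X_\gamma+r'X_{2\alpha+\beta}+r''X_{3\alpha+\beta}$, where $h\in\mathfrak{h}_R$ is chosen with $\alpha(h)=-N_{\alpha\beta}$ and $\beta(h)=0$; the existence of this $h$ is exactly Lemma \ref{directfactor} (that $R\alpha+R\beta$ is a direct factor of $\Hom(Q^\vee,R)$), which is the genuinely integral point where invertibility of $2$ and $3$ enters and which your proposal never invokes. (iii) A case analysis along the $\alpha$-string through $\beta$ (whether $2\alpha+\beta$ and $3\alpha+\beta$ are roots) is needed, because without the correction terms $r'X_{2\alpha+\beta}$, $r''X_{3\alpha+\beta}$ the cross term $X_\alpha\wedge X_\gamma$ in the expansion of $A\wedge B$ need not lie in $\bigwedge\mathfrak{g}_R$; the three cases are then handled by induction on one another. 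Your remark that one should ``start from an element that visibly lies in $\bigwedge\mathfrak{g}_R$ and expand it'' points in the right direction, but without (i)--(iii) you have no candidate element and no way to verify that it commutes, so the proposal as it stands has a genuine gap.
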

\begin{proof}
The difference $ X_\alpha \wedge X_\beta - \frac{N_{\alpha\beta}}{2}H_\gamma \wedge X_\gamma$ lies in the kernel of $[\; ,\;]$. Therefore, if we find elements $A, B \in \mathfrak{g}_R$ such that
\begin{equation*}
    A \wedge B \equiv X_\alpha \wedge X_\beta - \frac{N_{\alpha\beta}}{2}H_\gamma \wedge X_\gamma \bmod \bigwedge \mathfrak{g}_R
\end{equation*}
then neccesarily $[A,B] = 0$, and the assertion is proven.

By Lemma \ref{directfactor} we find an $h \in \mathfrak{h}_R$ such that $\alpha(h) = - N_{\alpha\beta}$ and $\beta(h) = 0$. According to \cite{B-LL} Chap.\ VI \S1.3 Cor.\ of Prop.\ 9 the following cases can occur:
\begin{itemize}
  \item[I.] $2\alpha + \beta \not\in \Phi$,
  \item[II.] $\gamma' := 2\alpha + \beta \in \Phi$, but $3\alpha + \beta \not\in \Phi$, or
  \item[III.] $\gamma' := 2\alpha + \beta \in \Phi$, $\gamma'' := 3\alpha + \beta \in \Phi$, but $4\alpha + \beta \not\in \Phi$.
\end{itemize}
\textit{Case I:} Here $\alpha + \gamma \not\in \Phi$ and hence, by the choice of $h$ and using 2. and 4.,
\begin{align*}
    (X_\alpha + h) \wedge (X_\beta + X_\gamma)
      & \equiv X_\alpha \wedge X_\beta + \frac{\beta(h)}{2} H_\beta \wedge X_\beta + \frac{\gamma(h)}{2} H_\gamma \wedge X_\gamma  \bmod \bigwedge \mathfrak{g}_R \\
      & = X_\alpha \wedge X_\beta - \frac{N_{\alpha\beta}}{2} H_\gamma \wedge X_\gamma
        \ .
\end{align*}
\textit{Case II:}  Here $\alpha + \gamma' \not\in \Phi$. Using 7. we define
\begin{equation*}
    r' := \frac{N_{\alpha\gamma}}{2 N_{\alpha\beta}} = - \frac{N_{\alpha\gamma}}{2 \alpha(h)} \in R \ .
\end{equation*}
We then have
\begin{align*}
     & (X_\alpha + h) \wedge (X_\beta + X_\gamma + r' X_{\gamma'})  \\
      & \equiv X_\alpha \wedge X_\beta + X_\alpha \wedge X_\gamma + \frac{\beta(h)}{2} H_\beta \wedge X_\beta + \frac{\gamma(h)}{2} H_\gamma \wedge X_\gamma + r' \frac{\gamma'(h)}{2} H_{\gamma'} \wedge X_{\gamma'} \bmod \bigwedge \mathfrak{g}_R \\
      & = X_\alpha \wedge X_\beta + X_\alpha \wedge X_\gamma - \frac{N_{\alpha\beta}}{2} H_\gamma \wedge X_\gamma - \frac{N_{\alpha\gamma}}{2} H_{\gamma'} \wedge X_{\gamma'} \\
      & \equiv X_\alpha \wedge X_\beta - \frac{N_{\alpha\beta}}{2} H_\gamma \wedge X_\gamma \bmod \bigwedge \mathfrak{g}_R
\end{align*}
where the last congruence uses the case I. for the pair $(\alpha, \gamma)$.

\noindent\textit{Case III:} We define
\begin{equation*}
    r' := \frac{N_{\alpha\gamma}}{2 N_{\alpha\beta}},\ r'' := r' \frac{N_{\alpha\gamma'}}{3 N_{\alpha\beta}} \in R \ .
\end{equation*}
Using case I. for $(\alpha,\gamma')$ and case II. for $(\alpha,\gamma)$ we have
\begin{equation*}
    (X_\alpha + h) \wedge (X_\beta + X_\gamma + r' X_{\gamma'} + r'' X_{\gamma''}) \equiv X_\alpha \wedge X_\beta - \frac{N_{\alpha\beta}}{2} H_\gamma \wedge X_\gamma \bmod \bigwedge \mathfrak{g}_R \ .
\end{equation*}
\end{proof}

Using 1., 2., 4., and Lemma \ref{congruence1} we see that the factor $R$-module $\bigwedge^2 \mathfrak{g}_R/ \bigwedge \mathfrak{g}_R$
is generated by the elements $X_\alpha \wedge X_{-\alpha}$ and $H_\alpha \wedge X_\alpha$ for
$\alpha \in \Phi$. But $[X_\alpha,X_{-\alpha}] \in \mathfrak{h}_{\mathbb{Z}}$ and $[H_\alpha,X_\alpha] =
2 X_\alpha$. The elements $X_\alpha$ are linearly independent among each other as well as from
$\mathfrak{h}_{\mathbb{Z}}$. It follows that
\begin{equation}\label{f:A}
    \ker[\;,\;] \subseteq \ \mathfrak{A} + \bigwedge \mathfrak{g}_R \quad\textrm{with}\ \mathfrak{A} := < X_\alpha \wedge
X_{-\alpha} : \alpha \in \Phi^+ >_R \ .
\end{equation}

We recall that the integers $m_{\beta\alpha}$, for $\beta \in \Phi$ and $\alpha \in
\Delta$, are defined by
\begin{equation*}
    \beta = \sum_{\alpha \in \Delta} m_{\beta\alpha} \alpha \ .
\end{equation*}

\begin{lemma}\label{congruence2}
For any $\beta \in \Phi^+$ we have
\begin{equation*}
    X_\beta \wedge X_{-\beta} \equiv \sum_{\alpha \in \Delta} m_{\beta\alpha} \frac{<H_\beta, H_\beta>}{<H_\alpha, H_\alpha>} X_\alpha \wedge X_{-\alpha} \bmod \bigwedge \mathfrak{g}_R \ .
\end{equation*}
\end{lemma}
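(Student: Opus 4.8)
The plan is to induct on the height $\mathrm{ht}(\beta)=\sum_{\alpha\in\Delta}m_{\beta\alpha}$ of the positive root $\beta$. If $\beta\in\Delta$ the asserted congruence is the tautology $X_\beta\wedge X_{-\beta}\equiv X_\beta\wedge X_{-\beta}$, so assume $\mathrm{ht}(\beta)\geq 2$ and choose $\alpha_0\in\Delta$ with $\beta':=\beta-\alpha_0\in\Phi^+$, so that $\mathrm{ht}(\beta')=\mathrm{ht}(\beta)-1$. The entire inductive step is contained in the single congruence
\begin{equation*}
    X_\beta\wedge X_{-\beta}\equiv \frac{N_{\beta',-\beta}}{N_{\alpha_0\beta'}}X_{\alpha_0}\wedge X_{-\alpha_0}-\frac{N_{\alpha_0,-\beta}}{N_{\alpha_0\beta'}}X_{\beta'}\wedge X_{-\beta'}\bmod \bigwedge\mathfrak{g}_R \ ,\tag{$\star$}
\end{equation*}
all three coefficients being units by 7. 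Indeed, once $(\star)$ is known I apply the inductive hypothesis to the lower-height term $X_{\beta'}\wedge X_{-\beta'}$ and the base case to $X_{\alpha_0}\wedge X_{-\alpha_0}$, so that $X_\beta\wedge X_{-\beta}$ becomes congruent to \emph{some} $R$-combination $\sum_{\alpha\in\Delta}c_\alpha X_\alpha\wedge X_{-\alpha}$. Applying $[\;,\;]$, which annihilates $\bigwedge\mathfrak{g}_R$, turns this into the identity $H_\beta=\sum_{\alpha\in\Delta}c_\alpha H_\alpha$ in $\mathfrak{h}_R$; since $\{H_\alpha:\alpha\in\Delta\}$ is an $R$-basis of $R\otimes_{\mathbb{Z}}Q^\vee$ and $H_\beta=\sum_\alpha m_{\beta\alpha}\frac{<H_\beta,H_\beta>}{<H_\alpha,H_\alpha>}H_\alpha$ by 6, the coefficients are forced to equal $c_\alpha=m_{\beta\alpha}\frac{<H_\beta,H_\beta>}{<H_\alpha,H_\alpha>}$. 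Thus all coefficient bookkeeping is automatic and everything reduces to proving $(\star)$.

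To prove $(\star)$ I first note that its two sides are already consistent under the bracket, for free. Writing $N_{\alpha_0\beta'}X_\beta=[X_{\alpha_0},X_{\beta'}]$ and expanding $[X_\beta,X_{-\beta}]=\frac{1}{N_{\alpha_0\beta'}}[[X_{\alpha_0},X_{\beta'}],X_{-\beta}]$ by the Jacobi identity, together with $[X_{\beta'},X_{-\beta}]=N_{\beta',-\beta}X_{-\alpha_0}$ and $[X_{\alpha_0},X_{-\beta}]=N_{\alpha_0,-\beta}X_{-\beta'}$, yields precisely $H_\beta=\frac{N_{\beta',-\beta}}{N_{\alpha_0\beta'}}H_{\alpha_0}-\frac{N_{\alpha_0,-\beta}}{N_{\alpha_0\beta'}}H_{\beta'}$, so the difference of the two sides of $(\star)$ lies in $\ker[\;,\;]$. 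Equivalently, that difference is, up to the unit $N_{\alpha_0\beta'}$, the Chevalley--Eilenberg type boundary $[X_{\alpha_0},X_{\beta'}]\wedge X_{-\beta}-X_{\alpha_0}\wedge[X_{\beta'},X_{-\beta}]+X_{\beta'}\wedge[X_{\alpha_0},X_{-\beta}]$, which automatically maps to $0$. The genuine content of $(\star)$ is therefore to upgrade ``lies in $\ker[\;,\;]$'' to ``lies in $\bigwedge\mathfrak{g}_R$''; this cannot be read off from Kostant's theorem, since that identity over $\mathbb{C}$ is exactly what we are trying to establish integrally.

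For the upgrade I would exhibit the relevant element as an explicit $R$-combination of wedges $x\wedge y$ with $[x,y]=0$. All roots occurring lie in the rank-$2$ subsystem $\Phi\cap(\mathbb{Q}\alpha_0+\mathbb{Q}\beta')$, which is of type $A_2$, $B_2$, or $G_2$; these correspond exactly to the three possibilities $2\alpha_0+\beta'\notin\Phi$, resp.\ $2\alpha_0+\beta'\in\Phi$ but $3\alpha_0+\beta'\notin\Phi$, resp.\ $3\alpha_0+\beta'\in\Phi$ of the cases I, II, III in Lemma \ref{congruence1}. A single decomposable wedge cannot suffice, because producing the cross term $X_{\alpha_0}\wedge X_{-\alpha_0}$ forces a Cartan summand $H_{\alpha_0}$ into the bracket; one must use a sum of commuting wedges whose Cartan contributions cancel, then reduce the remaining mixed terms $X_\mu\wedge X_{-\mu'}$ with $\mu+\mu'\in\Phi$ by Lemma \ref{congruence1} and those with $\mu+\mu'\notin\Phi\cup\{0\}$ by 2. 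In the $B_2$ and $G_2$ cases one feeds the auxiliary vectors $X_{2\alpha_0+\beta'}$ (and $X_{3\alpha_0+\beta'}$) into these wedges, exactly as in cases II and III of Lemma \ref{congruence1}, and checks that the resulting higher contributions $X_{\gamma'}\wedge X_{-\gamma'}$ drop out -- consistently with the coroot relation above, which involves only $H_{\alpha_0}$ and $H_{\beta'}$. The main obstacle is precisely this explicit integral commuting-wedge decomposition in the $B_2$ and $G_2$ cases, where the longer root strings make the cancellations delicate; the induction, the bracket consistency, and the determination of the coefficients via 6 are then purely formal.
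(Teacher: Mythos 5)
Your formal scaffolding is correct and is essentially the paper's own reduction, organized slightly more cleanly. The induction on height, the restatement of the inductive step as $(\star)$ (whose coefficients agree with the paper's Killing-form ratios $\frac{<H_\beta,H_\beta>}{<H_{\alpha_0},H_{\alpha_0}>}$ and $\frac{<H_\beta,H_\beta>}{<H_{\beta'},H_{\beta'}>}$, by the identities $N_{\alpha',\beta'}=N_{-\alpha',-\beta'}=-N_{\beta',\alpha'}$ and \cite{B-LL} Chap.\ VIII \S2.4 Lemma 4), the Jacobi-identity consistency check, and the trick of pinning down the final coefficients by applying $[\;,\;]$ and invoking 6.\ together with the freeness of $R\otimes_{\mathbb{Z}}Q^\vee$ on $\{H_\alpha:\alpha\in\Delta\}$ -- all of this is fine, and the last point is a nice way to avoid coefficient bookkeeping.

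However, the entire substance of the lemma is the proof of $(\star)$ itself, i.e.\ upgrading ``the difference lies in $\ker[\;,\;]$'' to ``the difference lies in $\bigwedge\mathfrak{g}_R$'', and this you do not carry out: you sketch a plan and then explicitly concede that ``the main obstacle is precisely this explicit integral commuting-wedge decomposition in the $B_2$ and $G_2$ cases.'' That obstacle \emph{is} the proof; everything you complete is the easy, formal part. Moreover, your guiding assertion that ``a single decomposable wedge cannot suffice'' points away from the device that makes the construction feasible: one does not need to write the difference exactly as a sum of commuting wedges. It suffices to exhibit $A,B\in\mathfrak{g}_R$ with $A\wedge B$ congruent to the difference modulo $\bigwedge\mathfrak{g}_R$; since the difference lies in $\ker[\;,\;]$ and $\bigwedge\mathfrak{g}_R\subseteq\ker[\;,\;]$, this congruence forces $[A,B]=0$ automatically, so $A\wedge B$, and hence the difference, lies in $\bigwedge\mathfrak{g}_R$ -- the same mechanism as in Lemma \ref{congruence1}. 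The paper constructs exactly such a single pair: with $\gamma=\alpha+\beta$, it takes
\begin{equation*}
  B = X_{-\gamma}+\tfrac{N_{-\beta,\gamma}}{N_{\beta,\alpha}}X_{-\alpha}+\tfrac{N_{-\alpha,\gamma}}{N_{\alpha,\beta}}X_{-\beta}+h \ ,
\end{equation*}
and $A$ equal to $X_\gamma+X_\alpha+X_\beta$ plus root-string correction terms in $X_{\alpha-j\beta}$ and $X_{\beta-j\alpha}$ ($j\leq 3$) plus a multiple of $h$, where $h$ is chosen via Lemma \ref{directfactor} so that $\alpha(h)=-\beta(h)=-N_{-\alpha,\gamma}N_{-\beta,\gamma}/N_{\alpha,\beta}$; the correction terms are precisely what absorbs the long root strings in your $B_2$ and $G_2$ cases, and the verification (the ``lengthy but straightforward computation'' using 4., Lemma \ref{congruence1}, and the identities a), b), c)) is where the actual work of the lemma sits. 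Until you supply such a pair $A,B$ (or your commuting-wedge sum with all cancellations verified), the proof is incomplete at its central step.
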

\begin{proof}
By an inductive argument it suffices to show that
\begin{equation*}
    X_\gamma \wedge X_{-\gamma} \equiv  \frac{<H_\gamma, H_\gamma>}{<H_\alpha, H_\alpha>} X_\alpha \wedge X_{-\alpha} + \frac{<H_\gamma, H_\gamma>}{<H_\beta, H_\beta>} X_\beta \wedge X_{-\beta} \bmod \bigwedge \mathfrak{g}_R
\end{equation*}
holds true for any $\alpha, \beta \in \Phi^+$ such that $\gamma := \alpha + \beta \in \Phi$. First we observe that by 6. the difference
\begin{equation*}
    X_\gamma \wedge X_{-\gamma} - \frac{<H_\gamma, H_\gamma>}{<H_\alpha, H_\alpha>} X_\alpha \wedge X_{-\alpha} - \frac{<H_\gamma, H_\gamma>}{<H_\beta, H_\beta>} X_\beta \wedge X_{-\beta}
\end{equation*}
indeed lies in the kernel of $[\; , \;]$. We put
\begin{gather*}
    N_{\alpha, -\beta} := 0 \quad\textrm{if}\ \alpha - \beta \not\in \Phi, \\
    N_{\alpha, -2\beta} :=
    \begin{cases}
    N_{\alpha, -\beta} N_{\alpha - \beta, -\beta} & \textrm{if $\alpha - \beta \in \Phi$}, \\
    0 & \textrm{otherwise},
    \end{cases} \\
    N_{\alpha, -3\beta} :=
    \begin{cases}
    N_{\alpha, -\beta} N_{\alpha - \beta, -\beta} N_{\alpha - 2\beta, -\beta} & \textrm{if $\alpha - \beta, \alpha - 2\beta \in \Phi$}, \\
    0 & \textrm{otherwise}
    \end{cases}
\end{gather*}
and similarly with $\alpha$ and $\beta$ exchanged. Using Lemma \ref{directfactor} we choose $h \in \mathfrak{h}_R$ such that
\begin{equation*}
    \alpha(h) = - \beta(h) \quad\textrm{and}\quad \alpha(h) = -
    \frac{N_{-\alpha, \gamma} N_{-\beta, \gamma}}{N_{\alpha, \beta}} \ .
\end{equation*}
We now define
\begin{align*}
    A := X_\gamma + X_\alpha + X_\beta & + \frac{N_{\alpha, - \beta}}{ 2 N_{\gamma, -\beta}} X_{\alpha - \beta} +
    \frac{N_{\alpha, - 2\beta}}{6 N_{\gamma, -\beta}^2} X_{\alpha - 2 \beta} + \frac{N_{\alpha, - 3\beta}}{24 N_{\gamma, -\beta}^3} X_{\alpha - 3 \beta} \\
    & + \frac{N_{\beta, - \alpha}}{ 2 N_{\gamma, -\alpha}} X_{\beta - \alpha} +
    \frac{N_{\beta, - 2\alpha}}{6 N_{\gamma, -\alpha}^2} X_{\beta - 2 \alpha} + \frac{N_{\beta, - 3\alpha}}{24 N_{\gamma, -\alpha}^3} X_{\beta - 3 \alpha} \\
    & + \frac{N_{\alpha, \beta}^2}{N_{-\alpha, \gamma} N_{- \beta, \gamma}} \big( 1 - \frac{N_{\beta, -\alpha} N_{\beta - \alpha, - \beta}}{2 N_{\alpha, \beta} N_{-\beta, \gamma}} \big) h
\end{align*}
(if the subscript ? in $X_?$ is not a root then the corresponding summand is set to zero, and the product $N_{\beta, -\alpha} N_{\beta - \alpha, - \beta}$ is understood to be equal to zero if $\beta - \alpha \not\in \Phi$) and
\begin{equation*}
    B := X_{-\gamma} +
   \frac{N_{-\beta, \gamma}}{N_{\beta, \alpha}} X_{-\alpha} + \frac{N_{-\alpha, \gamma}}{N_{\alpha, \beta}} X_{-\beta} + h \ .
\end{equation*}
A lengthy but straightforward computation shows that
\begin{equation*}
    A \wedge B \equiv X_\gamma \wedge X_{-\gamma} -  \frac{<H_\gamma, H_\gamma>}{<H_\alpha, H_\alpha>} X_\alpha \wedge X_{-\alpha} - \frac{<H_\gamma, H_\gamma>}{<H_\beta, H_\beta>} X_\beta \wedge X_{-\beta} \bmod \bigwedge \mathfrak{g}_R
\end{equation*}
Besides 4. and Lemma \ref{congruence1} it uses the following identities:
\begin{itemize}
  \item[a)] $\frac{<H_{\alpha' + \beta'}, H_{\alpha' + \beta'}>}{<H_{\alpha'}, H_{\alpha'}>} = - \frac{N_{-\beta', \alpha' + \beta'}}{N_{\beta', \alpha'}}$ and $\frac{<H_{\alpha' + \beta'}, H_{\alpha' + \beta'}>}{<H_{\beta'}, H_{\beta'}>} = - \frac{N_{-\alpha', \alpha' + \beta'}}{N_{\alpha', \beta'}}$ whenever $\alpha' , \beta', \alpha' + \beta' \in \Phi$ (\cite{B-LL} Chap. VIII \S2.4 Lemma 4).
  \item[b)] $N_{\alpha', \beta'} = N_{-\alpha', -\beta'} = - N_{\beta' , \alpha'}$ whenever $\alpha' , \beta', \alpha' + \beta' \in \Phi$ (cf.\ \cite{B-LL} Chap. VIII \S2.4 Prop.\ 7).
  \item[c)] $\frac{N_{\beta, -\alpha} N_{\beta - \alpha, - \beta}}{ N_{\alpha, \beta} N_{-\beta, \gamma}} = \frac{N_{\alpha, -\beta} N_{\alpha - \beta, - \alpha}}{N_{\beta, \alpha} N_{-\alpha, \gamma}}$.
\end{itemize}
To establish the last identity c) we may assume that $\alpha - \beta \in \Phi$. It then reduces to the equality
\begin{equation*}
    \frac{N_{\beta - \alpha, -\beta}}{N_{\alpha - \beta, -\alpha}} = \frac{N_{-\beta, \gamma}}{N_{-\alpha, \gamma}} \ .
\end{equation*}
Using a) and b) for $\alpha, \beta$ we obtain
\begin{equation*}
    \frac{N_{-\beta, \gamma}}{N_{-\alpha, \gamma}} = - \frac{<H_\beta, H_\beta>}{<H_\alpha, H_\alpha>} \ .
\end{equation*}
On the other hand, using a) for $\alpha' := -\alpha$ and $\beta' := \alpha - \beta$ we have
\begin{equation*}
     \frac{N_{\beta - \alpha, -\beta}}{N_{\alpha - \beta, -\alpha}} = - \frac{<H_{-\beta}, H_{-\beta}>}{<H_{-\alpha}, H_{-\alpha}>} = - \frac{<H_\beta, H_\beta>}{<H_\alpha, H_\alpha>} \ .
\end{equation*}
\end{proof}

\begin{theorem}\label{equality}
If $2$ and $3$ are invertible in $R$ then $\ker [\;,\;] = \bigwedge \mathfrak{g}_R$.
\end{theorem}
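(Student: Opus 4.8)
The inclusion $\bigwedge \mathfrak{g}_R \subseteq \ker[\;,\;]$ holds by the very definition of $\bigwedge \mathfrak{g}_R$, so the plan is to prove only the reverse inclusion $\ker[\;,\;] \subseteq \bigwedge \mathfrak{g}_R$, and to do so by feeding the containment \eqref{f:A} into Lemma \ref{congruence2}. First I would recall that \eqref{f:A} already confines $\ker[\;,\;]$ to $\mathfrak{A} + \bigwedge \mathfrak{g}_R$, where $\mathfrak{A}$ is the $R$-span of the elements $X_\beta \wedge X_{-\beta}$ with $\beta \in \Phi^+$. Lemma \ref{congruence2} then rewrites each such $X_\beta \wedge X_{-\beta}$, modulo $\bigwedge \mathfrak{g}_R$, as an $R$-linear combination of the elements $X_\alpha \wedge X_{-\alpha}$ attached to the \emph{simple} roots $\alpha \in \Delta$ alone. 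Hence
\begin{equation*}
    \ker[\;,\;] \subseteq \ <X_\alpha \wedge X_{-\alpha} : \alpha \in \Delta>_R \ + \ \bigwedge \mathfrak{g}_R \ ,
\end{equation*}
and the whole question is reduced to deciding which combinations of the finitely many classes of $X_\alpha \wedge X_{-\alpha}$, $\alpha \in \Delta$, can possibly lie in $\ker[\;,\;]$.

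Next I would detect the admissible combinations by applying the bracket itself. Take any $\omega \in \ker[\;,\;]$ and write $\omega = \sum_{\alpha \in \Delta} e_\alpha\, X_\alpha \wedge X_{-\alpha} + w$ with $e_\alpha \in R$ and $w \in \bigwedge \mathfrak{g}_R$. Since $\bigwedge \mathfrak{g}_R \subseteq \ker[\;,\;]$ we have both $[\;,\;](\omega) = 0$ and $[\;,\;](w) = 0$, so applying $[\;,\;]$ and using $[X_\alpha, X_{-\alpha}] = -H_\alpha$ gives
\begin{equation*}
    0 = \sum_{\alpha \in \Delta} e_\alpha\, [X_\alpha, X_{-\alpha}] = -\sum_{\alpha \in \Delta} e_\alpha H_\alpha \qquad\textrm{in } \mathfrak{h}_R \ .
\end{equation*}
At this point I invoke that the simple coroots $\{H_\alpha : \alpha \in \Delta\}$ are $R$-linearly independent in $\mathfrak{h}_R$, which is precisely the freeness of $R \otimes_{\mathbb{Z}} Q^\vee$ on this set recorded earlier together with its embedding $R \otimes_{\mathbb{Z}} Q^\vee \subseteq \mathfrak{h}_R$ already used in Lemma \ref{directfactor}. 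It follows that every $e_\alpha$ vanishes, whence $\omega = w \in \bigwedge \mathfrak{g}_R$. This yields $\ker[\;,\;] \subseteq \bigwedge \mathfrak{g}_R$ and, combined with the trivial inclusion, the desired equality.

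Conceptually all of the substantial work has been front-loaded into the elementary facts 1--7 and into Lemmas \ref{congruence1} and \ref{congruence2}, so the final argument is just a short formal deduction. Within it the one point that must be verified with care is the $R$-linear independence of the simple coroots inside $\mathfrak{h}_R$, equivalently the injectivity of the natural map $R \otimes_{\mathbb{Z}} Q^\vee \to \mathfrak{h}_R$. I expect this to be the hinge of the proof: it is what converts the reduction to simple roots into the actual vanishing of the coefficients $e_\alpha$, and it is exactly here that one must use that $R \otimes_{\mathbb{Z}} Q^\vee$ is free on $\{H_\alpha : \alpha \in \Delta\}$ and sits inside $\mathfrak{h}_R$, rather than merely being abstractly free. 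Everything else in the argument is the formal combination of \eqref{f:A} with Lemma \ref{congruence2} and the identity $[X_\alpha, X_{-\alpha}] = -H_\alpha$.
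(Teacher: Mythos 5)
Your proposal is correct and follows essentially the same route as the paper's own proof: reduce via \eqref{f:A} and Lemma \ref{congruence2} to an element supported on the $X_\alpha \wedge X_{-\alpha}$ with $\alpha \in \Delta$, apply the bracket using $[X_\alpha,X_{-\alpha}] = -H_\alpha$, and conclude from the fact that $\{H_\alpha : \alpha \in \Delta\}$ is an $R$-basis of $R \otimes_{\mathbb{Z}} Q^\vee$ inside $\mathfrak{h}_R$ that all coefficients vanish. The only (cosmetic) difference is that the paper carries along the explicit coefficients $\sum_{\beta} r_\beta m_{\beta\alpha} \frac{<H_\beta,H_\beta>}{<H_\alpha,H_\alpha>}$ produced by Lemma \ref{congruence2} rather than working with arbitrary coefficients $e_\alpha$, but the decisive linear-independence step you single out is exactly the one the paper uses.
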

\begin{proof}
By \eqref{f:A} and Lemma \ref{congruence2} it remains to consider any element $u \in \ker [\;,\;]$ of the form
\begin{equation*}
    u = \sum_{\beta \in \Phi^+} r_\beta \sum_{\alpha \in \Delta} m_{\beta\alpha} \frac{<H_\beta, H_\beta>}{<H_\alpha, H_\alpha>} X_\alpha \wedge X_{-\alpha}
\end{equation*}
with $r_\beta \in R$. We have
\begin{align*}
    0 & = \sum_{\beta \in \Phi^+} r_\beta \sum_{\alpha \in \Delta} m_{\beta\alpha} \frac{<H_\beta, H_\beta>}{<H_\alpha, H_\alpha>} [X_\alpha , X_{-\alpha}] \\
    & = - \sum_{\beta \in \Phi^+} r_\beta \sum_{\alpha \in \Delta} m_{\beta\alpha} \frac{<H_\beta, H_\beta>}{<H_\alpha, H_\alpha>} H_\alpha \\
    & = - \sum_{\alpha \in \Delta} \frac{1}{<H_\alpha, H_\alpha>} \big( \sum_{\beta \in \Phi^+} r_\beta m_{\beta\alpha} <H_\beta, H_\beta> \big) H_\alpha
\end{align*}
and hence
\begin{equation*}
    \sum_{\beta \in \Phi^+} r_\beta m_{\beta\alpha} <H_\beta, H_\beta>\; = 0 \qquad\textrm{for any $\alpha \in \Delta$}
\end{equation*}
by 6. But then also
\begin{equation*}
    u = \sum_{\alpha \in \Delta} \frac{1}{<H_\alpha, H_\alpha>} \big( \sum_{\beta \in \Phi^+} r_\beta m_{\beta\alpha} <H_\beta, H_\beta> \big) X_\alpha \wedge X_{-\alpha} = 0 \ .
\end{equation*}
\end{proof}

\begin{remar}\label{equalityrem}
Assuming again that $2$ and $3$ are invertible in $R$, let $a\in R$ be any non-zero-divisor. The $R$-Lie subalgebra
\begin{equation*}
    \mathfrak{g}_R(a):=a\mathfrak{g}_R:=\{ax| x\in \mathfrak{g}_R\}
\end{equation*}
of $\mathfrak{g}_R$ also satisfies
\begin{equation*}
    \ker [\;,\;] = \bigwedge \mathfrak{g}_R(a) \ .
\end{equation*}
This follows immediately from the commutative diagram
\begin{equation*}
    \xymatrix{
  {\bigwedge^2\mathfrak{g}_R} \ar[d]_{\bigwedge^2 a\cdot}^{\cong} \ar[r]^-{[\;,\;]} & {\mathfrak{g}_R} \ar@{^{(}->}[d]^{a^2\cdot} \\
  {\bigwedge^2\mathfrak{g}_R(a)}  \ar[r]^-{[\;,\;]} &  {\mathfrak{g}_R(a)}   }
\end{equation*}
in which the vertical maps are induced by multiplication by $a$ and $a^2$, respectively.
\end{remar}

The negative example in section \ref{s:counterex} compared to the above positive Thm.\ \ref{equality} could make the reader believe that the fundamental distinction here is between nilpotent and semisimple Lie algebras. But this not so. We consider the nilpotent $\mathbb{Z}$-Lie algebra
\begin{equation*}
    \mathfrak{n}_{\mathbb{Z}} := \sum_{\alpha \in \Phi^+} \mathbb{Z} X_\alpha
\end{equation*}
as well as its base change $\mathfrak{n}_R := R \otimes_{\mathbb{Z}} \mathfrak{n}_{\mathbb{Z}}$.

\begin{lemma}\label{decomp}
Let $\alpha_0, \alpha_1, \beta_0, \beta_1, \gamma \in \Phi^+$ be any five roots such that
\begin{equation*}
    \alpha_0 + \beta_0 = \gamma = \alpha_1 + \beta_1 \ ;
\end{equation*}
then in any of the pairs $\alpha_0 + \alpha_1, \alpha_0 + \beta_1$ and $\beta_0 + \alpha_1, \beta_0 + \beta_1$ and $\alpha_1 + \alpha_0, \alpha_1 + \beta_0$ and $\beta_1 + \alpha_0, \beta_1 + \beta_0$ at least one member is not a root.
\end{lemma}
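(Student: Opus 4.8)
The plan is to translate the statement entirely into the arithmetic of Cartan integers and root strings. Fix a $W$-invariant inner product $(\,,\,)$ on the real span of $\Phi$ and write $\langle\mu,\nu^\vee\rangle=2(\mu,\nu)/(\nu,\nu)$. I will use two standard facts about a reduced root system. First, for the $\nu$-string $\mu-p\nu,\ldots,\mu+q\nu$ through $\mu$ one has $p-q=\langle\mu,\nu^\vee\rangle$ and $p+q\le 3$. Second, consequently, if $\mu$ and $\nu$ are non-proportional with $\mu+\nu\in\Phi$ then $q\ge 1$, whence $\langle\mu,\nu^\vee\rangle=p-q\le 0$ outside type $G_2$ (the value can only be positive when the string has length $4$, which forces $G_2$).

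The four sums in the statement are precisely the two rows and two columns of the array whose rows are indexed by $\alpha_0,\beta_0$ and whose columns are indexed by $\alpha_1,\beta_1$. Since the hypothesis is symmetric under $\alpha_0\leftrightarrow\beta_0$, under $\alpha_1\leftrightarrow\beta_1$, and under interchanging the two decompositions of $\gamma$, it suffices to show that the row $\{\alpha_0+\alpha_1,\ \alpha_0+\beta_1\}$ contains a non-root. Suppose, for a contradiction, that both are roots; then $\alpha_1$ and $\beta_1$ are non-proportional to $\alpha_0$ (otherwise one of the sums would be $2\alpha_0\notin\Phi$). Pairing with $\alpha_0^\vee$ and using $\alpha_1+\beta_1=\gamma=\alpha_0+\beta_0$ gives the key identity
\[ \langle\alpha_1,\alpha_0^\vee\rangle+\langle\beta_1,\alpha_0^\vee\rangle=\langle\gamma,\alpha_0^\vee\rangle=2+\langle\beta_0,\alpha_0^\vee\rangle . \]
On the other hand $\alpha_0+\alpha_1,\alpha_0+\beta_1\in\Phi$ forces, by the second fact, $\langle\alpha_1,\alpha_0^\vee\rangle\le 0$ and $\langle\beta_1,\alpha_0^\vee\rangle\le 0$ (outside $G_2$), so the left-hand side is $\le 0$ and therefore $\langle\beta_0,\alpha_0^\vee\rangle\le -2$.

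In the simply-laced types $A$, $D$, $E$ all Cartan integers lie in $\{0,\pm1\}$, and $\alpha_0+\beta_0\in\Phi$ gives $\langle\beta_0,\alpha_0^\vee\rangle=-1$; the bound $\le -2$ is then already a contradiction, so the lemma holds in these cases. The main obstacle is the multiply-laced case, where $\langle\beta_0,\alpha_0^\vee\rangle\le -2$ is genuinely possible: it means $\alpha_0$ is short, $\beta_0$ is long, and they span a subsystem of type $B_2$ (the extreme value $-3$, i.e.\ $G_2$, being treated separately). Here the inequalities above must be saturated, forcing $\langle\alpha_1,\alpha_0^\vee\rangle=\langle\beta_1,\alpha_0^\vee\rangle=0$ together with $\alpha_0\pm\alpha_1,\ \alpha_0\pm\beta_1\in\Phi$; in particular both $\alpha_1$ and $\beta_1$ differ from $\alpha_0$ by a root, which together with $\alpha_1+\beta_1=\alpha_0+\beta_0$ confines all five roots to the rank $\le 3$ subsystem they generate.

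To conclude I would pin this configuration down and derive a contradiction by a direct inspection: having reduced to rank $\le 3$, one checks the finitely many possibilities (essentially $B_3$, $C_3$, their reducible degenerations, and $G_2$) and verifies that no such system admits a second decomposition $\gamma=\alpha_1+\beta_1$ of the forced shape. For $G_2$ this is immediate, since the only positive root with two distinct decompositions into positive roots is the highest root, for which every entry of the array already fails to be a root. The hard part is precisely this last step: the uniform Cartan-integer inequality does all the work in the simply-laced and generic situations, but the short/long root combinatorics of $B$, $C$, $F_4$ and $G_2$ must be disposed of by hand, and keeping that case analysis clean — rather than a long enumeration — is where I expect the real difficulty to lie.
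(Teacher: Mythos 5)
What you do prove is correct, and your route is genuinely different from the paper's: the paper makes the same rank-$\le 3$ reduction via the span of the four roots, but then uses Weyl invariance to move $\gamma$ to the highest long or short root and disposes of $A_2,B_2,G_2,A_3,B_3,C_3$ by inspecting the Bourbaki tables. Your Cartan-integer mechanism replaces most of that enumeration: the symmetry reduction to the single pair $\{\alpha_0+\alpha_1,\alpha_0+\beta_1\}$, the string fact that $\mu+\nu\in\Phi$ forces $\langle\mu,\nu^\vee\rangle\le 0$ outside $G_2$, the identity $\langle\alpha_1,\alpha_0^\vee\rangle+\langle\beta_1,\alpha_0^\vee\rangle=2+\langle\beta_0,\alpha_0^\vee\rangle$, the immediate contradiction in types $A,D,E$, the saturation analysis when $\langle\beta_0,\alpha_0^\vee\rangle=-2$ (forcing $\langle\alpha_1,\alpha_0^\vee\rangle=\langle\beta_1,\alpha_0^\vee\rangle=0$ and $\alpha_0\pm\alpha_1,\alpha_0\pm\beta_1\in\Phi$), and the direct $G_2$ check are all sound.

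However, the proof is not complete, and the missing piece is exactly the substantive case --- the one the paper itself singles out as "the most interesting" (the highest short root of $B_3$). For types $B$, $C$, $F_4$ you stop at the forced configuration and assert that a "direct inspection" of rank-$\le 3$ systems will rule it out; that inspection is never performed, and nothing you wrote yet excludes, say, the two decompositions $\alpha_1+(\alpha_2+\alpha_3)=(\alpha_1+\alpha_2)+\alpha_3$ in $B_3$. As it stands this is a plan with the hard case deferred, i.e.\ a genuine gap. It is, fortunately, quickly fillable with your own tools. Saturation gives $\alpha_0\perp\alpha_1$ with $\alpha_0+\alpha_1\in\Phi$, hence $|\alpha_0+\alpha_1|^2=|\alpha_0|^2+|\alpha_1|^2$, which in a $\sqrt{2}$-laced system forces $\alpha_1$ (and likewise $\beta_1$) to be short with $\alpha_0+\alpha_1$ long; and $\langle\beta_0,\alpha_0^\vee\rangle=-2$ with $\beta_0$ long gives $|\gamma|^2=|\alpha_0|^2+2|\alpha_0|^2-2|\alpha_0|^2=|\alpha_0|^2$, so $\gamma$ is short. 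Thus $\gamma=\alpha_1+\beta_1$ writes a short root as a sum of two short roots orthogonal to $\alpha_0$, and by your rank reduction the irreducible component containing all five roots is $B_2$, $B_3$, or $C_3$. In $B_2$ and $B_3$ a sum of two short roots is never short (short roots are the $\pm e_i$), a contradiction. In $C_3$, taking $\alpha_0=e_1+e_2$ up to Weyl symmetry, the only short roots orthogonal to $\alpha_0$ are $\pm(e_1-e_2)=\pm\gamma$, so $\beta_1=\gamma-\alpha_1\in\{0,2\gamma\}$ is not a root, again a contradiction. With this (or any equivalent check) written out, your argument is complete and arguably more conceptual than the paper's table inspection; without it, the multiply-laced case is simply missing.
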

\begin{proof}
Let $V$ be the underlying $\mathbb{R}$-vector space of the root system $\Phi$. If $V' \subseteq V$ denotes the subspace generated by $\alpha_0, \alpha_1, \beta_0, \beta_1$ then $\Phi' := \Phi \cap V'$ is a (reduced) root system in $V'$ (\cite{B-LL} Chap.\ VI \S1.1 Prop.\ 4). Obviously, $\Phi'$ is of rank $\leq 3$. Our assertion is an assertion about $\Phi'$. Moreover, allroots occurring in this assertion lie in the same irreducible component of $\Phi'$ as $\gamma$. We see that we may assume, without loss of generality, that $\Phi$ is irreducible of rank $\leq 3$. The case $A_1$ being empty we therefore need to consider the root systems $A_2, B_2, G_2$ and $A_3, B_3, C_3$. The assertion is clearly invariant under the Weyl group of $\Phi$. Hence it suffices (\cite{B-LL} Chap.\ VI \S1.3 Prop.\ 11) to treat the cases where $\gamma$ is either the highest long root or the highest short root. These are
\begin{align*}
    A_2 && \alpha_1 + \alpha_2 && \\
    B_2 && \alpha_1 + 2\alpha_2 && \alpha_1 + \alpha_2 \\
    G_2 && 3\alpha_1 + 2\alpha_2 && 2\alpha_1 + \alpha_2 \\
    A_3 && \alpha_1 + \alpha_2 + \alpha_3 && \\
    B_3 && \alpha_1 + 2\alpha_2 + 2\alpha_3 && \alpha_1 + \alpha_2 + \alpha_3 \\
    C_3 && 2\alpha_1 + 2\alpha_2 + \alpha_3 && \alpha_1 + 2\alpha_2 + \alpha_3
\end{align*}
(where the $\alpha_i$ form the basis of $\Phi$ corresponding to the Weyl chamber containing $\gamma$ and are appropriately ordered). The assertion can now be checked by a case by case inspection using the tables at the end of \cite{B-LL} Chap.\ VI. It is helpful to note that if $\gamma$ is the highest long root then all the roots $\alpha_0, \alpha_1, \beta_0, \beta_1$ neccessarily are positive. The most interesting case occurs for the highest short root in $B_3$ where the possible decompositions are
\begin{align*}
    \alpha_1 + \alpha_2 + \alpha_3 & = \alpha_1 + (\alpha_2 + \alpha_3) \\
& = (\alpha_1 + \alpha_2) + \alpha_3 \\
& = (\alpha_1 + \alpha_2 + 2\alpha_3) + (-\alpha_3) \\
& = (\alpha_1 + 2\alpha_2 + 2\alpha_3) + (-\alpha_2 -\alpha_3) \ .
\end{align*}
\end{proof}

\begin{proposition}\label{equality-nilp}
If $2$ and $3$ are invertible in $R$ then $\ker [\;,\;] = \bigwedge \mathfrak{n}_R$.
\end{proposition}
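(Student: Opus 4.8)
The inclusion $\bigwedge \mathfrak{n}_R \subseteq \ker[\;,\;]$ is immediate from the definition, so the plan is to prove the reverse inclusion $\ker[\;,\;] \subseteq \bigwedge \mathfrak{n}_R$. First I would fix the basis $\{X_\mu \wedge X_\nu : \mu,\nu \in \Phi^+,\ \mu \neq \nu\}$ of $\bigwedge^2 \mathfrak{n}_R$ and separate its elements into two kinds: those with $\mu + \nu \notin \Phi$, for which $[X_\mu,X_\nu] = 0$ and hence $X_\mu \wedge X_\nu \in \bigwedge \mathfrak{n}_R$ already (as in 2.), and the remaining ``resonant'' ones with $\mu + \nu \in \Phi^+$. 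Given $u \in \ker[\;,\;]$, subtracting its non-resonant part, which lies in $\bigwedge \mathfrak{n}_R$, I may assume
\begin{equation*}
    u = \sum_{\gamma \in \Phi^+} \sum_{d \in D(\gamma)} c_d\, X_{\alpha_d} \wedge X_{\beta_d} \ ,
\end{equation*}
where $D(\gamma)$ denotes the set of decompositions $\gamma = \alpha_d + \beta_d$ into positive roots and $c_d \in R$. Since $[X_{\alpha_d},X_{\beta_d}] = N_{\alpha_d\beta_d} X_\gamma$ with $N_{\alpha_d\beta_d}$ a unit by 7., the condition $[u]=0$ decouples over $\gamma$ into the relations $\sum_{d \in D(\gamma)} c_d N_{\alpha_d\beta_d} = 0$.

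The heart of the argument is the congruence
\begin{equation*}
    \frac{1}{N_{\alpha_0\beta_0}} X_{\alpha_0} \wedge X_{\beta_0} \equiv \frac{1}{N_{\alpha_1\beta_1}} X_{\alpha_1} \wedge X_{\beta_1} \bmod \bigwedge \mathfrak{n}_R
\end{equation*}
for any two distinct decompositions $\gamma = \alpha_0 + \beta_0 = \alpha_1 + \beta_1$ in $D(\gamma)$; here the four positive roots $\alpha_0,\beta_0,\alpha_1,\beta_1$ are automatically distinct. I would realize the difference of the two sides as a decomposable element $A \wedge B$ with $[A,B]=0$, so that $A \wedge B \in \bigwedge \mathfrak{n}_R$. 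With the ``first pairing'' $A := X_{\alpha_0} + u X_{\beta_1}$, $B := X_{\beta_0} + v X_{\alpha_1}$ the unwanted cross terms are governed by the sums $\alpha_0 + \alpha_1$ and $\beta_0 + \beta_1$, whereas the ``second pairing'' $A := X_{\alpha_0} + u X_{\alpha_1}$, $B := X_{\beta_0} + v X_{\beta_1}$ is governed by $\alpha_0 + \beta_1$ and $\alpha_1 + \beta_0$. In either pairing, if both governing sums fail to be roots, then the cross wedge terms lie in $\bigwedge \mathfrak{n}_R$ and the cross bracket terms vanish; choosing $u,v$ with $uv = \pm N_{\alpha_0\beta_0}/N_{\alpha_1\beta_1}$ (a unit) then makes $[A,B]=0$ and leaves precisely the asserted congruence.

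The main obstacle is to guarantee that at least one of the two pairings has both of its governing sums outside $\Phi$, and this is exactly where Lemma \ref{decomp} is used. Writing $P := \alpha_0 + \alpha_1$, $Q := \alpha_0 + \beta_1$, $R := \beta_0 + \alpha_1$, and $S := \beta_0 + \beta_1$, the lemma asserts that none of the pairs $\{P,Q\}$, $\{R,S\}$, $\{P,R\}$, $\{Q,S\}$ consists of two roots. A short case distinction then yields that either $P,S \notin \Phi$, so that the first pairing works, or $Q,R \notin \Phi$, so that the second works: if $Q \in \Phi$ then $\{P,Q\}$ and $\{Q,S\}$ force $P,S \notin \Phi$, and symmetrically if $R \in \Phi$, while if neither $Q$ nor $R$ is a root we are in the second case outright.

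It remains to assemble the conclusion. Fixing for each $\gamma$ a reference decomposition $d_\ast \in D(\gamma)$, the above congruence gives $X_{\alpha_d} \wedge X_{\beta_d} \equiv (N_{\alpha_d\beta_d}/N_{\alpha_{d_\ast}\beta_{d_\ast}})\, X_{\alpha_{d_\ast}} \wedge X_{\beta_{d_\ast}} \bmod \bigwedge \mathfrak{n}_R$ for every $d \in D(\gamma)$. Hence the $\gamma$-part of $u$ is congruent to $\big(\sum_{d \in D(\gamma)} c_d N_{\alpha_d\beta_d}\big) N_{\alpha_{d_\ast}\beta_{d_\ast}}^{-1}\, X_{\alpha_{d_\ast}} \wedge X_{\beta_{d_\ast}}$, which vanishes by the decoupled kernel relation. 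Summing over $\gamma$ shows $u \in \bigwedge \mathfrak{n}_R$, as required.
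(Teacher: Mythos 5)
Your proposal is correct and follows essentially the same route as the paper: the same decoupling over $\gamma$, the same key congruence $\frac{1}{N_{\alpha_0\beta_0}} X_{\alpha_0} \wedge X_{\beta_0} \equiv \frac{1}{N_{\alpha_1\beta_1}} X_{\alpha_1} \wedge X_{\beta_1} \bmod \bigwedge \mathfrak{n}_R$ exhibited via a decomposable element in the kernel, and the same reliance on Lemma \ref{decomp} to kill the cross terms. Your explicit two-pairing case distinction is just an unfolding of the paper's step ``order the pair $(\alpha,\beta)$ so that $\alpha_0+\alpha,\ \beta_0+\beta \notin \Phi$,'' so the two arguments coincide up to bookkeeping.
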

\begin{proof}
Using 2. it remains to show the following. Fix any $\gamma \in \Phi^+$ which can be written in at least two different ways as a sum of two positive roots. Also fix one decomposition $\gamma = \alpha_0 + \beta_0$ with $\alpha_0, \beta_0 \in \Phi^+$. Given any other such decomposition of $\gamma$ into $\alpha, \beta \in \Phi^+$ we may, by Lemma \ref{decomp}, order the pair $(\alpha, \beta)$ in such a way that $\alpha_0 + \alpha, \beta_0 + \beta \not\in \Phi^+$. Let $S_\gamma$ denote the set of all such ordered pairs (including the pair $(\alpha_0,\beta_0)$). Then $\{X_\alpha \wedge X_\beta : (\alpha, \beta) \in S_\gamma\}$ is a basis of the $R$-submodule $[\;,\;]^{-1}(RX_\gamma)$ of $\bigwedge^2 \mathfrak{n}_R$. We have to show that
\begin{equation*}
    \ker [\;,\;] \cap [\;,\;]^{-1}(RX_\gamma) \subseteq \bigwedge \mathfrak{n}_R
\end{equation*}
holds true. Let $\sum_{(\alpha,\beta) \in S_\gamma} c_{\alpha\beta} X_\alpha \wedge X_\beta \in \ker [\;,\;]$, with $c_{\alpha\beta} \in R$, so that
\begin{equation*}
    \sum_{(\alpha,\beta) \in S_\gamma} c_{\alpha\beta} N_{\alpha\beta} = 0 \ .
\end{equation*}
Since the $N_{\alpha\beta}$ are units in $R$ by 7. we may compute
\begin{multline*}
    \sum_{(\alpha,\beta) \in S_\gamma} c_{\alpha\beta} X_\alpha \wedge X_\beta = \sum_{(\alpha,\beta) \in S_\gamma} c_{\alpha\beta} N_{\alpha\beta} \frac{1}{N_{\alpha\beta}} X_\alpha \wedge X_\beta \\
= \sum_{(\alpha_0,\beta_0) \neq (\alpha,\beta) \in S_\gamma} c_{\alpha\beta} N_{\alpha\beta} (\frac{1}{N_{\alpha\beta}} X_\alpha \wedge X_\beta - \frac{1}{N_{\alpha_0\beta_0}} X_{\alpha_0} \wedge X_{\beta_0})
\end{multline*}
and
\begin{multline*}
    \frac{1}{N_{\alpha\beta}} X_\alpha \wedge X_\beta - \frac{1}{N_{\alpha_0\beta_0}} X_{\alpha_0} \wedge X_{\beta_0} = \\
(\frac{1}{N_{\alpha\beta}} X_\alpha + X_{\beta_0}) \wedge (X_\beta + \frac{1}{N_{\alpha_0\beta_0}} X_{\alpha_0}) - \frac{1}{N_{\alpha\beta} N_{\alpha_0\beta_0}} X_\alpha \wedge X_{\alpha_0} + X_\beta \wedge X_{\beta_0} \ .
\end{multline*}
Since $\alpha + \alpha_0, \beta + \beta_0 \not\in \Phi$ the three summands on the right hand side of the last identity lie in the kernel of $[\;.\;]$ and hence in $\bigwedge \mathfrak{n}_R$.
\end{proof}

We point out that for root systems of type $A_n$ we have $N_{\alpha\beta} \in \{\pm 1\}$ and therefore Prop.\ \ref{equality-nilp} holds for any ring $R$.

\section{Vanishing of $SK_1$}

In this section we consider the case $R=\mathbb{Z}_p$ for $p\neq 2,3.$ Let $\mathfrak{g}$ be a
$\mathbb{Q}_p$-split reductive Lie algebra and $\mathfrak{g}_{\mathbb{Z}}\subseteq \mathfrak{g}$
a Chevalley order. Then, for any $n\geq 1,$ the $\mathbb{Z}_p$-Lie algebra
$\mathfrak{g}_{\mathbb{Z}_p}(p^n)$ is {\it saturated} in the sense of Lazard and even {\it
powerful} in the sense of \cite{DDMS}, i.e., torsionfree as $\mathbb{Z}_p$-module and satisfying
$[\mathfrak{g}_{\mathbb{Z}_p}(p^n) ,\mathfrak{g}_{\mathbb{Z}_p}(p^n) ]\subseteq
p\mathfrak{g}_{\mathbb{Z}_p}(p^n).$ Hence, by \cite{DDMS} Thm.\ 9.10, there is a (up to
unique isomorphism) unique uniform $p$-adic Lie group $G(p^n)$ with $\mathbb{Z}_p$-Lie algebra
\begin{equation*}
    \mathcal{L}(G(p^n))=\mathfrak{g}_{\mathbb{Z}_p}(p^n) \ .
\end{equation*}
Using   Thm.\ \ref{Lie-criterion1} and Remark \ref{equalityrem}
we obtain our last main result.

\begin{theorem}\label{main}
In the above setting we have
\begin{equation*}
    SK_1(\Lambda(G(p^n))) = 0 \ .
\end{equation*}
\end{theorem}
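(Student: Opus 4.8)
The plan is to observe that Theorem \ref{main} is now an immediate synthesis of the group-theoretic criterion of Theorem \ref{Lie-criterion1} with the purely Lie-theoretic equality established in Theorem \ref{equality} and Remark \ref{equalityrem}; essentially no new work remains. First I would recall from the paragraph preceding the statement that $G(p^n)$ is, by construction, a uniform pro-$p$ group whose associated Lazard Lie algebra is $\mathcal{L}(G(p^n)) = \mathfrak{g}_{\mathbb{Z}_p}(p^n)$. Since $G(p^n)$ is uniform, Theorem \ref{Lie-criterion1} applies and reduces the vanishing $SK_1(\Lambda(G(p^n))) = 0$ to verifying the single inclusion
\begin{equation*}
    \ker\big( \textstyle\bigwedge^2 \mathcal{L} \xrightarrow{\; [\;,\;]_L \;} \mathcal{L} \big) \subseteq \bigwedge \mathcal{L} \qquad\textrm{for}\ \mathcal{L} = \mathfrak{g}_{\mathbb{Z}_p}(p^n) \ .
\end{equation*}

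Next I would note that the submodule $\bigwedge \mathcal{L}$ appearing in Theorem \ref{Lie-criterion1}, namely the $\mathbb{Z}_p$-submodule of $\bigwedge^2 \mathcal{L}$ generated by all $x \wedge y$ with $[x,y]_L = 0$, is by definition identical to the module $\bigwedge \mathfrak{g}_R(a)$ introduced in the section on Chevalley orders, taken with $R = \mathbb{Z}_p$ and $a = p^n$. Thus the inclusion to be checked is precisely the nontrivial half of the identity $\ker[\;,\;] = \bigwedge \mathfrak{g}_{\mathbb{Z}_p}(p^n)$.

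Finally I would invoke the standing hypothesis $p \neq 2, 3$: then $2$ and $3$ are units in $R = \mathbb{Z}_p$, so Theorem \ref{equality} yields $\ker[\;,\;] = \bigwedge \mathfrak{g}_{\mathbb{Z}_p}$; and since $p^n$ is a non-zero-divisor in $\mathbb{Z}_p$, Remark \ref{equalityrem} upgrades this to
\begin{equation*}
    \ker[\;,\;] = \bigwedge \mathfrak{g}_{\mathbb{Z}_p}(p^n) \ .
\end{equation*}
In particular the required inclusion holds (indeed as an equality), and Theorem \ref{Lie-criterion1} gives $SK_1(\Lambda(G(p^n))) = 0$. There is no genuine obstacle left to overcome here: all the real content has been absorbed into the combinatorial root computation behind Theorem \ref{equality} and the scaling argument of Remark \ref{equalityrem}, together with the translation of the $SK_1$-vanishing into a Lie-theoretic statement in Theorem \ref{Lie-criterion1}. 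The only points requiring a moment's care are the identification of the two a priori differently-defined $\bigwedge$-modules and the fact that $G(p^n)$ is uniform, both of which are guaranteed by the setup immediately preceding the theorem.
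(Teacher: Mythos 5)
Your proposal is correct and follows exactly the paper's own route: the paper likewise deduces Theorem \ref{main} directly from Theorem \ref{Lie-criterion1} applied to the uniform group $G(p^n)$ with $\mathcal{L}(G(p^n)) = \mathfrak{g}_{\mathbb{Z}_p}(p^n)$, combined with Remark \ref{equalityrem} (itself resting on Theorem \ref{equality}) for $R = \mathbb{Z}_p$, $a = p^n$, under the standing assumption $p \neq 2,3$. Your attention to the identification of the two $\bigwedge$-modules and to the uniformity of $G(p^n)$ matches the paper's setup verbatim.
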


We briefly describe how these uniform groups arise as subgroups of $\mathbb{Q}_p$-points of an algebraic
group. To this end let $\mathcal{G}$  be a split reductive group scheme over $\mathbb{Z}$ (cf.\ \cite{SGA} Exp.\ XIX Def.\ 2.7 and  Exp.\ XXII Def.\ 1.13; we recall that this includes the requirements that $\mathcal{G}$ is affine and smooth with connected fibers). Being smooth the group scheme $\mathcal{G}$ posseses a $\mathbb{Z}$-Lie algebra $\mathfrak{g}_{\mathbb{Z}}$. Its base change $\mathfrak{g} := \mathbb{Q}_p \otimes_{\mathbb{Z}} \mathfrak{g}_{\mathbb{Z}}$ is the Lie algebra of the $\mathbb{Q}_p$-split reductive algebraic group
$\mathcal{G}_{\mathbb{Q}_p} := \mathbb{Q}_p \times_{\mathbb{Z}} \mathcal{G}$ and therefore is a $\mathbb{Q}_p$-split reductive Lie algebra. At the same time $\mathfrak{g} = \Lie_{\mathbb{Q}_p}(\mathcal{G}(\mathbb{Q}_p))$ is  the Lie algebra of the (noncompact) $p$-adic Lie group $\mathcal{G}(\mathbb{Q}_p)$ of $\mathbb{Q}_p$-rational points of $\mathcal{G}$. According to \cite{Jan} \S II.1.1/11/12 the $\mathbb{Z}$-Lie algebra
$\mathfrak{g}_{\mathbb{Z}}$ indeed is a Chevalley
order of $\mathfrak{g}$. We introduce the open pro-$p$ subgroups
\begin{equation*}
    G(p^n):=\ker\big(\mathcal{G}(\mathbb{Z}_p)\to
\mathcal{G}(\mathbb{Z}/p^n)\big)
\end{equation*}
for $n \geq 1$. It is shown in \cite{HKN} Ex.\ 2.6.8 (note that they write  $\mathcal{L}^*$  for  our $\mathcal{L}$ here) that we have
\begin{equation*}
    p\mathbb{Z}_p \otimes_{\mathbb{Z}} \mathfrak{g}_{\mathbb{Z}} = \mathcal{L}(G(p)) \ .
\end{equation*}
The pro-$p$ group $G(p)$ can be identified  with the standard group $\widehat{\mathcal{G}}(p\mathbb{Z}_p)$ associated with the formal group which arises as formal completion $\widehat{\mathcal{G}}$ of $\mathcal{G}$ along its unit section. In particular $G(p)$ is uniform (cf.\ \cite{DDMS} Thm.\ 8.31). More generally the proof of loc.\ cit.\ shows that the $G(p)^{p^{n-1}} = \widehat{\mathcal{G}}(p^n\mathbb{Z}_p) = G(p^n)$ form the lower $p$-series of $G(p)$. Hence, since the $p$-power map on the group coincides with the multiplication by $p$ on the Lie algebra we obtain
\begin{equation*}
    \mathcal{L}(G(p^n) )=p^{n-1}\mathcal{L}(G(p))=p^{n}\mathrm{Lie}(\mathcal{G}_{\mathbb{Z}_p})
\end{equation*}
for any $n \geq 1$.

We finish this section with the following criterion.

\begin{remark}\label{Lie-criterion2}
For any uniform $G$ such that its Lie algebra $\Lie_{\mathbb{Q}_p}(G)$ is $\mathbb{Q}_p$-split reductive we have $SK_1(\Lambda(G)) = 0$ if and only if $\bigwedge^2 \mathcal{L}(G) / \bigwedge \mathcal{L}(G)$ is torsion free.
\end{remark}
\begin{proof}
Let $\mathfrak{g} := \Lie_{\mathbb{Q}_p}(G)$ and $\mathcal{L} := \mathcal{L}(G)$. By Thm.\ \ref{Lie-criterion1} and Thm.\ \ref{equality} the vanishing of $SK_1(\Lambda(G))$ is equivalent to the inclusion $\bigwedge \mathfrak{g} \cap \bigwedge^2 \mathcal{L} \subseteq \bigwedge \mathcal{L}$. But
\begin{equation*}
    (\bigwedge \mathfrak{g} \cap \bigwedge^2 \mathcal{L}) / \bigwedge \mathcal{L} = \tors (\bigwedge^2 \mathcal{L} / \bigwedge \mathcal{L}) \ .
\end{equation*}
\end{proof}


\begin{thebibliography}{DDMS}

\bibitem[B-LL]{B-LL} Bourbaki N.: \emph{Lie Groups and Lie Algebras}.
Berlin - Heidelberg - New York: Springer 1998

\bibitem[SGA3]{SGA} Demazure M., Grothendieck A.: \emph{ Sch\'emas en Groupes III}. Lect.\ Notes Math.\ 153, Berlin - Heidelberg - New York: Springer 1970

\bibitem[DDMS]{DDMS} Dixon J.D., du Sautoy M.P.F., Mann A., Segal D.: \emph{Analytic
Pro-$p$-Groups}. Cambridge Univ. Press 1999

\bibitem[GS]{GS} Gonz\'alez-S\'anchez J.: \emph{On $p$-saturable
groups}. J.\ Algebra 315, 809-823 (2007)

\bibitem[HKN]{HKN} Huber A., Kings G., Naumann N.: \emph{Some complements to the Lazard isomorphism}. Compositio Math.\ 147, 235-262 (2011)

\bibitem[Jan]{Jan} Jantzen J.\ C.: \emph{Representations of algebraic groups}, 2nd ed. AMS 2003
    
\bibitem[Kak]{Kak} Kakde M.: \emph{The main conjecture of Iwasawa theory for totally real fields}. Preprint 2011

\bibitem[Ko]{Ko} Kostant B.: \emph{Eigenvalues of a Laplacian and commutative subalgebras}. Topology 3, 147-159 (1965)

\bibitem[Laz]{Laz} Lazard M.: \emph{Groupes analytiques $p$-adique}.
Publ.\ Math.\ IHES 26, 389-603 (1965)

\bibitem[NSW]{NSW} Neukirch J., Schmidt A., Wingberg K.: \emph{Cohomology of number fields}. 2nd ed. Berlin - Heidelberg - New York: Springer 2008

\bibitem[Oli]{Oli} Oliver R.: \emph{Whitehead groups of finite groups}.
London Math.\ Soc.\ Lect.\ Notes 132. Cambridge Univ.\ Press 1988

\bibitem[SV]{SV} Schneider P., Venjakob O.: \emph{A splitting for $K_1$ of completed group rings}. To appear in Comment.\ Math.\ Helvetici.

\bibitem[Ser]{Ser} Serre J.-P-: \emph{Alg\`ebre Locale Multiplicit\'es}. Lect.\ Notes Math.\ 11. Berlin - Heidelberg - New York: Springer 1975

\bibitem[Wei]{Wei} Weibel C.: \emph{An introduction to homological algebra}. Cambridge Univ.\ Press 1994

\end{thebibliography}
\end{document}